\numberwithin{equation}{section} \makeatletter
\DeclareMathOperator{\stab}{\sigma}
\DeclareMathOperator{\tw}{Tw}
\DeclareMathOperator{\pic}{Pic}
\DeclareMathOperator{\ShHom}{\mathscr{H}\text{\kern -3pt {\calligra\large om}}\,}
\DeclareMathOperator{\spec}{Spec}
\newcommand{\Z}{\mathbf{Z}}
\DeclareMathOperator{\Edges}{Edges}
\DeclareMathOperator{\Verts}{Vert}
\newcommand{\fcj}{\overline{J}}
\newcommand{\fcjs}{\overline{\mathcal J}}
\newcommand{\fcuj}[1][{g,n}]{\overline{\mathcal J}_{{#1}}}
\newcommand{\tGamma}{{\widetilde{\Gamma}}}
\newcommand{\Mb}{\overline{\mathcal{M}}}
\newcommand{\Cb}{\overline{\mathcal{C}}}
\newcommand{\Jb}{\overline{\mathcal{J}}}
\newcommand{\gammazero}{G}
\newcommand{\Pic}{\operatorname{Pic}}
\newcommand{\GSym}{\operatorname{GSym}}
\newcommand{\Spec}{ {\operatorname{Spec}}}
\newcommand{\Simp}{ {\operatorname{Simp}}}
\newcommand{\NF}{ {\operatorname{N}}}
\newtheorem{proposition}[equation]{Proposition}
\newtheorem{corollary}[equation]{Corollary}
\newtheorem{lemma}[equation]{Lemma}
\newtheorem{theorem}[equation]{Theorem}
\theoremstyle{definition}
\newtheorem{definition}[equation]{Definition}
\newtheorem{remark}[equation]{\textbf{Remark}}
\newtheorem{example}[equation]{\textbf{Example}}
\newtheorem{question}[equation]{\textbf{Question}}
\definecolor{forestgreen}{rgb}{0.13, 0.55, 0.13}
\begin{document}

\title{Stability conditions for line bundles on  nodal curves}

\author{Nicola Pagani}

\address{N.~Pagani, Department of Mathematical Sciences, University of Liverpool, Liverpool, L69 7ZL, United Kingdom}
\email{pagani@liv.ac.uk}
\urladdr{http://pcwww.liv.ac.uk/~pagani/}

\author{Orsola Tommasi}

\address{O.~Tommasi, Dipartimento di Matematica ``Tullio Levi-Civita'', University of Padova, via Trieste 63, IT-35127 Padova, Italy}
\email{tommasi@math.unipd.it}
\urladdr{http://www.math.unipd.it/~tommasi}

\begin{abstract} We introduce the abstract notion of a \emph{smoothable fine compactified Jacobian} of a nodal curve, and of a family of nodal curves whose general element is smooth. Then we introduce the combinatorial notion of a stability assignment for line bundles and their degenerations. 

We prove that smoothable fine compactified Jacobians are in bijection with these  stability assignments.  

We then turn our attention to \emph{fine compactified universal Jacobians}, that is, fine compactified Jacobians for the moduli space $\overline{\mathcal{M}}_g$ of stable curves (without marked points). We prove that every fine compactified universal Jacobian is isomorphic to the one first constructed by Caporaso, Pandharipande and Simpson in the nineties. In particular, without marked points, there exists no fine compactified universal Jacobian unless $\gcd(d+1-g, 2g-2)=1$. \end{abstract}
\maketitle

\tableofcontents

\section{Introduction}

A classical construction from the XIX century associates with every smooth projective curve $X$ its Jacobian (the moduli space of degree~$0$ line bundles on $X$),  a principally polarized abelian variety of dimension $g$. 
The construction carries on to smooth projective families of curves. One challenging problem arises when $X$ ceases to be smooth. In this case the Jacobian can still be constructed, but in general it fails to be proper. A general problem from the mid XX century was to construct well-behaved compactifications of the Jacobian, whose boundary corresponds to degenerate line bundles of some kind.

Many different constructions have been pursued according to the particular generality required and the initial inputs (see for example \cite{igusa}, \cite{oda79}, \cite{AK},   \cite{caporaso}, \cite{simpson}, \cite{panda}, \cite{esteves}); some of these work in the relative case of families as well.

For simplicity here we restrict ourselves to the case where $X$ is a nodal curve. Also, we will fix the horizon of all possible degenerations of line bundles to \emph{torsion-free coherent sheaves of rank~$1$}. Since we will aim to construct \emph{proper} moduli stacks of stable sheaves, without losing in generality we will additionally assume that all sheaves are \emph{simple}. In this generality, the moduli space of sheaves was constructed as an algebraic space by Altman--Kleiman \cite{AK}. Esteves \cite{esteves} later proved it  satisfies the existence part of the valuative criterion of properness. (This moduli space is not of finite type, hence it is not proper, whenever $X$  is reducible).

Most modular constructions of fine compactified Jacobians use some set of instructions (for example coming from GIT) to single out an open subset of the moduli space of simple sheaves choosing certain \emph{stable} elements, to end up with a proper moduli stack. The construction is often followed by the observation that stability of a sheaf only depends on its multidegree and on its locally free locus in $X$, and then that these discrete data obey a collection of axioms (for example, the number of stable multidegrees of line bundles on a nodal curve $X$ equals the complexity of the dual graph of  $X$). 

In this paper we introduce an abstract  notion of a \emph{fine compactified Jacobian} as a \emph{connected, open} subspace of the moduli space of rank~$1$ torsion-free simple sheaves  of some fixed degree  (not necessarily zero) on $X$, which is furthermore \emph{proper} (see Definition~\ref{def:finecompjac}).\footnote{The adjective ``fine'' classically refers to the  existence of a Poincar\'e sheaf.} It was observed in \cite[Section~3]{paganitommasi} that fine compactified Jacobians can be badly behaved in the sense that they can fail to fit into a family for an infinitesimal smoothing of the curve. (This phenomenon already occurs when $X$ has genus~$1$). Thus, we add a smoothability axiom to the objects that we aim to study. Note that our definition of smoothable fine compactified Jacobian includes the modular fine compactified Jacobians constructed in the literature (e.g. those constructed by Esteves \cite{esteves} and by Oda--Seshadri \cite{oda79} and recently studied in \cite{mv}, \cite{meravi}) .

We then prove our first classification result, stating that smoothable fine compactified Jacobians correspond to a combinatorial datum that we call a \emph{stability assignment} (for smoothable fine compactified Jacobians), which keeps track of the multidegree of the elements of the moduli space and of the locus where they are locally free:

\begin{theorem} \label{mainthm}
Let $X$ be a nodal curve. Taking the associated assignment (see Definition~\ref{assocassign}) induces a bijection 
\[
\Set{\begin{array}{l}
       \textrm{Smoothable fine compactified   } \\
       \textrm{ \ \ \ \ \ \ \ Jacobians of } X
     \end{array}
    } \to \Set{\begin{array}{l}
       \textrm{stability assignments for  } X \textrm{ as}\\ \textrm{introduced in Definition~\ref{finejacstab}}
     \end{array}
    }
\]
whose inverse is defined by taking the moduli space of sheaves that are stable with respect to a given stability assignment (Definition~\ref{defstable}).
\end{theorem}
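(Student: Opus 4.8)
The plan is to run the comparison through the natural stratification of the moduli space $\mathcal S$ of simple rank-$1$ torsion-free sheaves of degree $d$ on $X$: its strata are the Picard varieties $\Pic^{\underline e}(X_S)$, indexed by a set of nodes $S$ whose partial normalization $X_S$ is connected, together with a multidegree $\underline e$ in the allowed range, and each stratum is a torsor under the connected group $\Pic^{\underline 0}(X_S)$, hence irreducible. The first thing I would establish — or invoke, via the standard argument that $\Pic^{\underline 0}(X)$ acts on $\mathcal S$ and acts transitively on each Picard stratum — is that a smoothable fine compactified Jacobian $\overline J\subseteq\mathcal S$ is \emph{saturated}, i.e.\ a union of Picard strata. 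Granting this, the assignment of Definition~\ref{assocassign} attached to $\overline J$ is just the finite set $\Sigma(\overline J)$ of pairs $(S,\underline e)$ with $\Pic^{\underline e}(X_S)\subseteq\overline J$ (finite because $\overline J$, being proper, is quasi-compact), and the construction of Definition~\ref{defstable} produces the moduli space $\overline J_\sigma$ of sheaves stable with respect to a stability assignment $\sigma$, which is the union of the Picard strata prescribed by $\sigma$, with its induced subspace structure. The theorem then splits into three claims: (1)~$\Sigma(\overline J)$ obeys the axioms of Definition~\ref{finejacstab}; (2)~$\overline J_\sigma$ is a smoothable fine compactified Jacobian; (3)~the two operations are mutually inverse.

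For (1) I would translate each property of a fine compactified Jacobian into a combinatorial statement about $\Sigma(\overline J)$. Openness of $\overline J$ in $\mathcal S$ says that $\Sigma(\overline J)$ is stable under generization in the poset of strata (passing to a subset $S'\subsetneq S$ with a compatible multidegree, which is the effect on the discrete data of smoothing the nodes in $S\setminus S'$). Properness is the essential point: feeding families of sheaves over a discrete valuation ring into the valuative criterion and combining with Esteves's existence theorem for limits of simple sheaves \cite{esteves}, one reads off that along every one-parameter degeneration $\Sigma(\overline J)$ selects exactly one admissible limit — this ``existence and uniqueness of a stable limit'' is, I expect, the core clause of Definition~\ref{finejacstab}. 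Connectedness of $\overline J$ yields the corresponding connectedness of $\Sigma(\overline J)$ (its strata glue along codimension-one strata into a connected configuration), and the smoothability hypothesis yields the remaining compatibility axiom, which I expect to record that $\overline J$ is the central fibre of a relative fine compactified Jacobian over a one-parameter smoothing of $X$ with the full degree-$d$ Picard variety as generic fibre; this is exactly what forces the admissible multidegrees to be compatible with the degeneration of the generic Picard variety.

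For (2) the same dictionary runs in reverse: generization-stability of $\sigma$ makes $\overline J_\sigma$ open (a finite, generization-stable union of locally closed strata is constructible and stable under generization, hence open); the ``unique stable limit'' axiom together with Esteves's theorem gives both halves of the valuative criterion — existence of a limit in $\overline J_\sigma$, and, since a one-parameter family may acquire several limits inside $\mathcal S$, uniqueness of the one that is $\sigma$-stable — hence properness; the connectedness axiom gives connectedness; and performing the construction of $\overline J_\sigma$ over a one-parameter smoothing $\mathcal X\to\Spec R$, using the relative stratification of Esteves's relative moduli space, produces a proper family over $\Spec R$ with central fibre $\overline J_\sigma$, which witnesses smoothability. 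For (3): saturation gives $\overline J=\overline J_{\Sigma(\overline J)}$ as subsets of $\mathcal S$, and both carry the induced subspace structure, so they agree as spaces; conversely the Picard strata met by $\overline J_\sigma$ are exactly those prescribed by $\sigma$ — one inclusion is the definition, the other holds because the axioms only permit pairs $(S,\underline e)$ indexing a nonempty stratum — so $\Sigma(\overline J_\sigma)=\sigma$.

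The step I expect to be the main obstacle, in both directions, is the properness one: matching the valuative criterion with the combinatorial ``unique stable limit'' condition. This amounts to classifying all limits in $\mathcal S$ of a given one-parameter family of sheaves — combinatorially, in terms of the nodes along which the limit becomes non-locally-free and the resulting redistribution of multidegree — and then checking that the axioms of Definition~\ref{finejacstab} are engineered so that precisely one of these limits is stable. A secondary difficulty is the smoothability bookkeeping: checking that the relative construction over a smoothing genuinely restricts to $\overline J_\sigma$ on the central fibre, and, conversely, that the mere existence of such a relative model is faithfully recorded by the finite combinatorial datum, with no hidden further constraint.
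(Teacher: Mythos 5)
Your overall architecture --- stratify $\Simp^d(X)$ into Picard strata, show a fine compactified Jacobian is a union of strata, and translate openness/properness/smoothability into combinatorial axioms, with the two constructions mutually inverse --- does match the paper's (Lemma~\ref{exists-forall}, Corollary~\ref{cond1}, Proposition~\ref{cond2}, Lemma~\ref{properoverdelta}, assembled in Corollary~\ref{bijection}). But there are genuine gaps. The most serious is your claim that properness alone yields the ``exactly one admissible limit'' condition, which you identify as the core clause of Definition~\ref{finejacstab}. That clause is in fact that each $\sigma(\gammazero)$ is a minimal complete set of representatives for the twister (chip-firing) group acting on multidegrees, and properness of $\fcj$ over $K$ does \emph{not} imply it: the genus-$1$ examples from \cite{paganitommasi} recalled in the paper are open, connected and proper yet contain $r\ge 2$ representatives per twister orbit. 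The point is that properness over $K$ controls limits of families on the constant family $X\times\Delta$, whereas the twister condition concerns limits of line bundles on the generic fibre of a \emph{smoothing} $\mathcal X/\Delta$; this is exactly where smoothability enters (Proposition~\ref{cond2}), and the proof there needs a degree-$2$ base change, a resolution of the $A_m$ singularities of the total space, and a comparison of chip-firing on $\Gamma$ with chip-firing on its edge-subdivision (Proposition~\ref{coroll: chipfiring}). None of this is visible in your sketch. Relatedly, your saturation argument via the transitive $\Pic^{\underline{0}}$-action does not work as stated, since that action need not preserve $\fcj$; the paper proves saturation (Lemma~\ref{exists-forall}) from openness together with properness.

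Two further omissions. First, you never address why $\sigma_{\fcj}(\gammazero)$ is nonempty for \emph{every} connected spanning subgraph $\gammazero$: properness only forces $\fcj$ to contain some minimal (spanning-tree) stratum, and upgrading this to all connected spanning subgraphs is Lemma~\ref{lemma:support}, which requires a separate induction through genus-$1$ subgraphs and is an essential input to Proposition~\ref{cond2} via Proposition~\ref{barmak}. Second, in the converse direction you correctly flag properness of $\fcj_\sigma$ as the main obstacle but leave it unproved; the paper's Lemma~\ref{properoverdelta} again has to resolve the (possibly non-regular) total space of the smoothing, lift $\sigma$ to the subdivided dual graph, and verify that the lift is still a minimal complete set of twister representatives --- a step with real combinatorial content (Part~(2) of Proposition~\ref{coroll: chipfiring}) that your outline does not anticipate. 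As it stands the proposal is a plausible road map that reproduces the paper's strategy in outline, but the two hardest steps are either asserted on incorrect grounds or deferred.
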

This follows by applying Corollary~\ref{bijection} to the case where $S$ is a DVR and $\mathcal{X}/S$ is a regular smoothing of $X$. The most difficult part is the proof of properness of the moduli space of stable sheaves with respect to an arbitrary stability assignment (Lemma~\ref{properoverdelta}).

In fact, Corollary~\ref{bijection}  is an extension of Theorem~\ref{mainthm} to the case of fine compactified Jacobians of \emph{families} of nodal curves whose generic element is smooth (Definition~\ref{def:familyfinecompjac}). 
The combinatorial notion of a stability assignment for a \emph{family} is introduced in Definition~\ref{familyfinejacstab}, as the datum of a stability assignment for each fiber, with an additional constraint of compatibility under the degenerations that occur in the family (which induce  morphisms of the corresponding dual graphs).

A natural question is whether our abstract definition of fine compactified Jacobians produces new examples. The most general procedure to construct smoothable fine compactified Jacobians that we are aware of is by means of numerical polarizations. This was introduced by Oda--Seshadri \cite{oda79} for the case of a single nodal curve, then further developed by Kass--Pagani \cite{kp2}, \cite{kp3} for the case of the universal family over the moduli space of pointed stable curves (equivalent objects were constructed by Melo in \cite{melouniversal} following Esteves \cite{esteves}). These definitions and constructions are reviewed in Section~\ref{Sec: OS}.

By Theorem~\ref{mainthm}, it is a completely combinatorial (but  hard) question whether every smoothable fine compactified Jacobian is given by a numerical polarization. (Note that ``smoothable'' here is essential, due to the aforementioned genus~$1$  examples in \cite[Section~3]{paganitommasi}. Those examples are not smoothable, whereas all compactified Jacobians obtained from numerical polarizations are smoothable).  The case where the genus of $X$ equals~$1$ was settled in the affirmative in \cite[Proposition~3.15]{paganitommasi}, and in Example~\ref{genus1} we discuss how to extend this to the case where the first Betti number of the dual graph of $X$ equals $1$. In Example~\ref{ibd} we discuss the numerical polarization that induces integral break divisors (slightly generalizing the analogous result by Christ--Payne--Shen \cite{cps} for the case where $X$ is stable). 
\footnote{While this paper was under peer review, Filippo Viviani constructed in \cite[Example~1.27]{viviani} an example, based on the combinatorics of \cite[Example 6.15]{paganitommasi}, of a smoothable fine compactified Jacobian of a nodal curve of genus $3$ that is not induced by any numerical stability condition.}

We resolve in the positive the similar question for the case of the universal curve over $\overline{\mathcal{M}}_g$. 
Without marked points,  fine compactified universal Jacobians are all given by universal numerical polarizations:

\begin{theorem}
    Let $\overline{\mathcal{J}}_{g} \to \overline{\mathcal{M}}_g$ be a degree~$d$ fine compactified universal Jacobian. Then $\gcd(d-g+1, 2g-2)=1$ and there exists a universal numerical polarization $\Phi$ such that $\overline{\mathcal{J}}_{g}=\overline{\mathcal{J}}_{g}(\Phi)$ (as defined in Section~\ref{Sec: OS}). 
\end{theorem}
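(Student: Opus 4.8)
The plan is to pass from geometry to combinatorics via the classification of the previous sections, and then to exploit two rigidity mechanisms: the automorphisms of special stable curves, and the compatibility of the stability data under degeneration. First, applying Corollary~\ref{bijection} to the universal family $\overline{\mathcal{C}}_g\to\overline{\mathcal{M}}_g$ over the stack, a degree~$d$ fine compactified universal Jacobian $\overline{\mathcal{J}}_{g}$ is the same datum as a stability assignment $\sigma$ for $\overline{\mathcal{C}}_g\to\overline{\mathcal{M}}_g$ in the sense of Definition~\ref{familyfinejacstab}. I will use three features of such a $\sigma$: (i) for every stable curve $X$ of genus $g$ the invertible stable sheaves realise exactly one multidegree in each of the $c(\Gamma_X)$ classes modulo twist, where $c(\Gamma_X)$ is the complexity (number of spanning trees) of the dual graph; (ii) since we work over the \emph{stack} $\overline{\mathcal{M}}_g$, the set $\sigma(X)$ is invariant under $\Aut(X)$ for every $X$; (iii) $\sigma$ is compatible with the specialisation maps of dual graphs occurring in the family. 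The goal is to deduce from (i)--(iii) both $\gcd(d-g+1,2g-2)=1$ and $\sigma=\sigma(\Phi)$ for a universal numerical polarization $\Phi$ (necessarily the canonical one, so that $\overline{\mathcal{J}}_{g}$ is the Caporaso--Pandharipande--Simpson space).

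For the necessity of the gcd condition I would use two families of test curves. Fix $g\geq 3$ and let $Y$ be the genus~$g$ ``necklace'': a cyclic chain $E_1,\dots,E_{g-1}$ of mutually isomorphic elliptic curves, $E_i$ meeting $E_{i\pm1}$ in one point, arranged so that the rotation $\rho\colon E_i\mapsto E_{i+1}$ is an automorphism. Then $\Gamma_Y$ is the $(g-1)$-cycle, $c(\Gamma_Y)=g-1$, the group of multidegrees of total degree $d$ modulo twist is cyclic of order $g-1$, and a direct computation with the winding-number invariant $\underline{d}\mapsto\sum_i i\,d_i$ shows that $\rho$ acts on this torsor by translation by $d$, while the multidegrees fixed by $\rho^{m}$, with $m=(g-1)/\gcd(d,g-1)$, meet only $m$ of the $g-1$ classes. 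By (i)--(ii), $\sigma(Y)$ picks exactly one multidegree per class, $\rho$-equivariantly; since $\rho^{m}$ fixes every class, that chosen multidegree must itself be $\rho^{m}$-fixed in each class, which is impossible unless $m=g-1$, i.e. $\gcd(d,g-1)=1$, equivalently $\gcd(d-g+1,g-1)=1$. When $g-1$ is odd (so $g$ even) it remains to show $d$ is even: run the same argument on the symmetric ``banana'' $B_{g+1}$ with two rational components meeting in $g+1$ points and the swap $\iota$; here the twist classes form $\Z/(g+1)$, $\iota$ acts by $x\mapsto d-x$, this has a fixed class because $g+1$ is odd, and $\iota$-equivariance of $\sigma(B_{g+1})$ there forces $2\deg_{C_1}=d$. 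Combining the two, $\gcd(d-g+1,2g-2)=1$ (for $g=2$ only $B_3$ is needed).

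To identify $\sigma$ with a universal numerical polarization I would reconstruct it from its values on two-component (vine) curves and then pin those down. On a vine curve $X=C_1\cup C_2$ with $C_i$ smooth of genus $g_i$ meeting in $n$ nodes, (i) says $\sigma$ selects an interval of $n$ consecutive integers for $\deg_{C_1}$; let $\Phi(X,C_1)$ be the real number re-centring this interval as in Section~\ref{Sec: OS}, with $\Phi(X,C_2)=d-\Phi(X,C_1)$. Using (ii) on symmetric vine curves ($C_1\cong C_2$) and (iii) on the degenerations obtained by further breaking $C_1$ or $C_2$, one shows that $\Phi(X,C_1)$ depends only on the numerical type $(g_1,n)$, that it is additive under such degenerations, and hence that $\Phi(X,V)$ must be proportional to $\deg_V(\omega_X)=2g_V-2+\#(V\cap\overline{X\setminus V})$; the constant is forced to be $d/(2g-2)$ since total degree is $d$ and $\sum_V\deg_V(\omega_X)=2g-2$. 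This is exactly the canonical polarization $\Phi_{\mathrm{can}}$, which by the gcd condition just established is non-degenerate, so $\sigma(\Phi_{\mathrm{can}})$ is a genuine stability assignment; as it agrees with $\sigma$ on all vine curves and $\sigma$ is determined by that restriction, $\sigma=\sigma(\Phi_{\mathrm{can}})$ and $\overline{\mathcal{J}}_{g}=\overline{\mathcal{J}}_{g}(\Phi_{\mathrm{can}})$ is the Caporaso--Pandharipande--Simpson universal Jacobian.

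The main obstacle is the rigidity input in the last step: a priori there could be an ``exotic'' universal stability assignment of non-numerical type, just as Viviani's example gives one for a single genus~$3$ curve. So the heart of the argument is to show such behaviour cannot propagate $\Aut$-equivariantly and compatibly across all of $\overline{\mathcal{M}}_g$. Concretely one must (a) prove that a universal stability assignment is determined by its restriction to vine curves, by induction on the number of nodes of $X$, recovering $\sigma(X)$ from the stability data on the vine curves arising as its one-parameter degenerations through the compatibility maps (iii); and (b) control the vine-curve data using only the automorphisms of symmetric vine curves together with the additivity forced by (iii). The remaining ingredients --- the winding-number computation on the necklace, the verification that $\Phi_{\mathrm{can}}$ is non-degenerate exactly under the gcd hypothesis, and the identification of $\overline{\mathcal{J}}_{g}(\Phi_{\mathrm{can}})$ with the Caporaso--Pandharipande--Simpson space --- are then routine given the constructions recalled in Section~\ref{Sec: OS}.
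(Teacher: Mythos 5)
Your overall architecture (reduce via Corollary~\ref{bijection} to a universal stability assignment, exploit automorphism-invariance on a symmetric test graph for the $\gcd$ condition, then reconstruct the assignment from its restriction to vine curves) matches the paper's, and your $\gcd$ argument is a correct variant: where the paper uses the trivalent graph $\GSym_g$ and the genus-$1$ classification to force $\phi(v_i)=\tfrac{d-g+1}{2g-2}$ on the cycle $\Gamma_g$ (Lemma~\ref{verysymmetric}), you use the $(g-1)$-cycle of elliptic curves plus the winding-number invariant, and the symmetric banana $\Gamma(g+1,0,0)$ to handle the factor of $2$; both yield $\gcd(d-g+1,2g-2)=1$ from $\Aut(\Gamma)$-equivariance of a minimal complete set of twister representatives.

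However, your final step contains a genuine error: the claim that additivity under degeneration forces $\Phi(X,V)$ to be proportional to $\deg_V(\omega_X)$ on \emph{all} vine curves, hence that $\sigma=\sigma(\Phi_{\operatorname{can}})$ and $\fcj[g]=\Jb_g(\Phi^d_{\operatorname{can}})$ on the nose. This is false, and the paper itself exhibits the counterexamples: by Proposition~\ref{rem: canonical}, for each $1\le i<\lfloor\frac{g-1}{2}\rfloor$ the stable bidegree over the separating-node vine curve $\Gamma(1,i,g-i)$ can be \emph{any} integer $\alpha_i$, and every such choice is realized by a nondegenerate $\Phi\in V^d_g$ (equivalently, by $\Phi_{\operatorname{can}}^d+\underline{\deg}(M)$ for some $M\in\Pic^0(\Cb_g)$, see Remark~\ref{maincoroll2}); the resulting $\Jb_g(\Phi)$ are pairwise distinct open subspaces of $\Simp^d(\Cb_g/\Mb_g)$, all fine compactified universal Jacobians, and only abstractly isomorphic to the canonical one. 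The mechanism your argument misses is that a separating edge decouples the two sides: no contraction relation or graph automorphism constrains the integer assigned across a separating node, so the compatibility system has a free $\Z$-parameter for each such boundary divisor. Rigidity holds only on the non-separating part (this is the paper's Corollary~\ref{twovertexgraph} and the last sentence of Theorem~\ref{OS=fine}). Consequently your proof must be weakened to: $\tau$ agrees with $\sigma_{\Phi_{\operatorname{can}}}$ on all multi-edge vine curves, and on the $1$-edge vine curves it takes some integer values $\alpha_i$; one then still needs the nontrivial \emph{existence} statement that these $\alpha_i$ are realized by a nondegenerate strongly compatible $\Phi\in V^d_g$ (the paper imports this from \cite{kp3} in Proposition~\ref{rem: canonical}), before Corollary~\ref{determinedon2} lets you conclude $\tau=\sigma_\Phi$. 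As written, your argument proves a false statement in its last paragraph and omits this existence step.
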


This follows from Corollary~\ref{maincoroll}. In particular, as we observe in Remark~\ref{maincoroll2}, without marked points, there are no more fine compactified universal Jacobians than the (essentialy equivalent) ones constructed  in the nineties by Caporaso \cite{caporaso}, Pandharipande \cite{panda} and Simpson \cite{simpson}.  

A similar result does not hold in the presence of marked points: in \cite{paganitommasi} the authors produce examples of fine compactified universal Jacobians for $\overline{\mathcal{M}}_{1,n}$ for all $n \geq 6$ that are not obtained from a universal numerical polarization, hence 
 that do not arise from the methods developed by Kass--Pagani \cite{kp3} or by Esteves and Melo \cite{esteves,melouniversal} (more details in Remark~\ref{n>0}). An explicit combinatorial characterization of the collection $\Sigma^d_{g,n}$ of degree~$d$ fine compactified universal Jacobians for $\overline{\mathcal{M}}_{g,n}$ is available via Corollary~\ref{bijection} applied to the universal family over $\overline{\mathcal{M}}_{g,n}$ (see also Definition~\ref{familyfinejacstab} and Remark~\ref{Rem: univ stab}). It would be interesting to interpret each element of $\Sigma^d_{g,n}$ as a (top-dimensional) chamber in some stability space, as was done in \cite{kp3} for the case of compactified universal Jacobians arising from numerical polarizations. We explore these questions in Section~\ref{sec: final}.

Compactified universal Jacobians have recently played a role in enumerative geometry in the theory of the ($k$-twisted) double ramification cycle, see \cite{bhpss}. As realized in \cite{HKP}, whenever a fine compactified universal Jacobian contains the locus $Z$  of line bundles of multidegree zero, the double ramification cycle can be defined as the pullback of $[Z]$, via some Abel--Jacobi section. This perspective plays an important role in \cite{hmpps}, where an extension to a \emph{logarithmic double ramification cycle} is defined as well.  Because there are different fine compactified universal Jacobians containing $Z$,  the double ramification cycle can be equivalently defined as the pullback of $[Z]$ from different spaces, potentially leading to different formulas, hence to relations in the cohomology of the moduli space of curves. Our classification leads to a complete description  of \emph{all} fine compactified universal Jacobians containing $Z$, whereas previously only those obtained via Kass--Pagani's method \cite{kp3} were considered.

The problem of studying the stability space of complexes of sheaves on a projective variety $X$ has attracted a lot of attention in the last two decades, after Bridgeland's breakthrough \cite{bridgeland} (extended to the case of families in \cite{bayer}). Most of the literature has been devoted to the case where $X$ is nonsingular. It is natural to try to explicitly describe this stability space for $X$ singular, and one place to start is assuming that $X$ is a nodal curve. We expect that the combinatorics developed in Theorem~\ref{mainthm} should be regarded as some kind of skeleton of that stability space.

{\bf Added after peer review:} 
After a first draft of this work had been published on the arXiv repository, Filippo Viviani shared with us his preprint \cite{viviani}. His work uses some of our results. Later, Alex Abreu and an anonymous referee kindly informed us of gaps in the proof of some of our results. 
These gaps have been fixed in this version of the paper, by also employing \cite[Theorem~1.20]{viviani}. The latter does not use any of our results as part of its proof.

\subsection{Acknowledgments}

Many of the initial ideas of this project are owed to Jesse Kass. It is with great pleasure that we acknowledge his intellectual contribution to this work.

We are grateful to Jonathan Barmak for sharing with us \cite{barmak} a solution to the combinatorial problem that appears in the proof of Lemma~\ref{enoughontrees}.

NP is very thankful to Alex Abreu and Filippo Viviani for several important contributions that came after an early version of the preprint was shared with them. In particular, Filippo Viviani pointed out a gap in the proof of the earlier version of Proposition~\ref{cond2}, and Alex Abreu suggested the numerical polarization of Example~\ref{ibd}. NP also wants to thank Marco Fava for several helpful discussions.

We are very grateful to the anonymous referees for the careful reading of the previous version of this paper. Their kind comments  have helped us improve the contents and the exposition.

\section{Notation} 
\label{notation}
Throughout we work with Noetherian schemes over a fixed algebraically closed ground field $k$.

 A  \textbf{curve} over an extension $K$ of $k$ is a $\spec(K)$-scheme $X$ 
 that is proper over $\spec(K)$, geometrically connected, and of pure dimension $1$. The curve $X$ is a \textbf{nodal curve} if it is geometrically reduced and when passing to an algebraic closure $\overline{K}$, its local ring at every singular point is isomorphic to $\overline{K}[[x,y]]/(xy)$. 

A coherent sheaf on a nodal curve $X$ has \textbf{rank~$1$} if its localisation at each  generic point of $X$ has length $1$. It is \textbf{torsion-free} if it has no embedded components.

If $F$ is a rank~$1$ torsion-free sheaf on a nodal curve $X$ we denote by $\NF(F)$ the subset of $X$ where $F$ fails to be locally free. Note that $\NF(F)$ is contained in the singular locus of $X$. If $F$ is a rank~$1$ torsion-free sheaf on $X$ we say that $F$ is \textbf{simple} if its automorphism group is $\mathbf{G}_m$, or equivalently if $X\setminus \NF(F)$ is connected.

A \textbf{family of curves} over a $k$-scheme $S$ is a proper, flat morphism $\mathcal{X} \to S$ whose fibers are curves. A family of curves $\mathcal{X} \to S$ is a \textbf{family of nodal curves} if the fibers over all geometric points are nodal curves.

If $X$ is a nodal  curve over $K$, we  denote by $\Gamma(X)$ its {\bf dual graph} 
i.e. the labelled graph where each vertex $v$ corresponds to an irreducible component $X_{\overline{K}}^v$ of the base change of $X$ to (the spectrum of) an algebraic (equivalently, a separable) closure $\overline{K}$, and edges corresponding to the nodes of $X_{\overline{K}}$.  Note that if $X^v_K$ is an irreducible component defined over $K$, then it is also defined over any extension $L$ of $K$, and the corresponding vertices of the dual graphs  $\Gamma(X_K)$ and $\Gamma(X_L)$ are canonically identified.

The dual graph is labelled by the geometric genus
$p_g(X^v_{\overline{K}})$.  The definition of dual graph extends to the case where $(X,p_1,\dots,p_n)$ is an $n$-pointed  curve. In this case the dual graph $\Gamma(X)$ also has $n$ half-edges labelled from $1$ to $n$, corresponding to the marked points $p_1, \ldots, p_n$.
 We refer to \cite{acg2} and \cite{memoulvi} for a detailed definition and for the notion of graph morphisms. 

 Recall from \cite[\S~7.2]{CCUW} that if $\mathcal{X}/S$ is a family of nodal curves and $s,t$ are geometric points of $S$, then every \'etale specialization of $t$ to $s$ (written as $t \rightsquigarrow s$) induces a morphism of dual graphs $\Gamma(X_s)\rightarrow \Gamma(X_t)$. (For the definition of \'etale specialization in this context we refer to \cite[Appendix~A]{CCUW}).

For a graph $G$ and $H$ a subgraph of $G$, we denote by $G\setminus H$ and by $G/H$ the graph obtained from $G$ by removing the edges of $H$ and the graph obtained from $G$ by contracting the edges of $H$, respectively. 

 We denote by $c(G)$ the {\bf complexity} of the graph $G$, i.e. the number of spanning trees in $G$. (In particular, if $G$ is disconnected, then $c(G)=0$).

If $G$ is a graph and $V$ is a subset of $\Verts(G)$, we denote by $\Gamma(V)$ the induced subgraph on the vertex set $V$.  We denote by $\Edges(V,V^c)$ the subset of $\Edges(G)$ of the edges that connect some element of $V$ to some element of $V^c= \Verts(G) \setminus V$ (equivalently, $\Edges(V,V^c)$ consists of the edges of $G$ that are neither in $\Gamma(V)$ nor in $\Gamma(V^c)$).

We will  denote by $\Delta$ the spectrum of a DVR with residue field $K$, and by $0$ (resp. by $\eta$) its closed (resp. its generic) point. A \textbf{smoothing}  of a nodal curve $X/K$ over $\Delta$ is a flat family $\mathcal{X}/\Delta$  whose generic fiber $\mathcal{X}_{\eta}/\eta$ is smooth and with an isomorphism of $K$-schemes $\mathcal{X}_0 \cong X$.  The smoothing is \textbf{regular} if so is its total space $\mathcal{X}$.

A \textbf{family of rank~$1$ torsion-free sheaves} over a family of curves $\mathcal{X} \to S$ is a coherent sheaf on $\mathcal{X}$, flat over $S$, whose fibers over the geometric points have rank~$1$ and are torsion-free.

If $F$ is a rank~$1$ torsion-free sheaf on a nodal curve $X$ with irreducible components $X_i$, we denote by $F_{\widetilde{X}_{i}}$  the maximal torsion-free quotient of the pullback of $F$ to the normalization $\widetilde{X}_i$ of $X_i$, and then define the {\bf multidegree} of $F$ by \[{\underline{\deg}}(F) := ({\deg}(F_{\widetilde{X}_{i}})) \in \Z^{\operatorname{Vert}(\Gamma(X))}.\]  We define the {\bf (total) degree} of $F$ to be $\deg_X(F):=\chi(F)-1+p_a(X)$, where $p_a(X)= h^1(X, \mathcal{O}_X)$ is the arithmetic genus of $X$. The total degree and the multidegree of $F$ are related by the formula $\deg_X(F) = \sum \deg_{X_i} F + \delta(F)$, where $\delta(F)=\# \NF(F)$ denotes the number of nodes of $X$ where $F$ fails to be locally free.
\footnote{Note that in the Notation section of the papers \cite{kp2,kp3,paganitommasi}  the last equation is incorrectly written with a minus sign: $- \delta(F)$ instead of the correct $+\delta(F)$.}

If $X' \subseteq X$ is a subcurve (by which we will always mean a union of irreducible components), then $\deg_{X'}(F)$ is defined as $\deg (F_{X'})$, where $F_{X'}$ is the 
maximal torsion-free quotient of $F \otimes \mathcal{O}_{X'}$. The total degree on $X$ is related to the degree on a subcurve by the formula
\begin{equation} \label{deg:subcurve}
\deg_X(F)= \deg_{X'}(F) + \deg_{\overline{X \setminus X'}}(F) + \# (\NF(F) \cap (X' \cap \overline{(X \setminus X')})).
\end{equation}
where the overline denotes the (Zariski) closure.

From now on we fix an integer $d$ once and for all.

\subsection{Spaces of multidegrees} Here we define the space of multidegrees on a graph $\Gamma$ at a connected spanning subgraph as the set of all possible ways of labelling its vertices with integral weights, suitably organised by total weight (or total degree). We then define the notion of an assignment on $\Gamma$.

Let $\Gamma$ be a graph and let $\gammazero \subseteq \Gamma$ be a spanning subgraph of $\Gamma$. 
We will denote by $n_\Gamma(\gammazero)$ or simply by $n(\gammazero)$ the number of elements in $\operatorname{Edges}(\Gamma) \setminus \operatorname{Edges}(\gammazero)$. 

 Define the \emph{space of multidegrees} of total degree $d$ of $\Gamma$ at $\gammazero$ as the set
\begin{equation} \label{sgamma}
S^d_{\Gamma}(\gammazero) := \Set{\underline{\mathbf d} \in \Z^{\Verts(\Gamma)} : \sum_{v \in \Verts(\Gamma)} \underline{\mathbf d}(v) = d  - n(\gammazero)} \subset \Z^{\Verts(\Gamma)}.
\end{equation}

The elements of $S^d_{\Gamma}(\Gamma)$ are also known as \emph{degree $d-n(\gammazero)$ divisors} on  $\Gamma$.

According to our convention for the multidegree, if $X$ is a nodal curve with dual graph $\Gamma$ and $F$ is a rank~$1$ torsion-free sheaf on $X$, then the subgraph  $\gammazero(F)$ obtained from $\Gamma$ by removing the edges $\NF(F)$ is a spanning subgraph of $\Gamma$, and we have
\[\underline{\deg}(F) \in S^d_{\Gamma}(\gammazero(F)). \]

 We are now ready for the definition of an assignment on a graph $\Gamma$, which will play a role in the definition of a stability assignment (Definition~\ref{finejacstab}). 
\begin{definition} \label{def: assignment}
   {\bf A degree~$d$ assignment for the graph $\Gamma$} is a subset
\[\sigma = \{(\gammazero,\underline{\mathbf d}):\;\underline{\mathbf d}\in S^d_\Gamma(\gammazero)\}\subset \{\text{connected spanning subgraphs of }\Gamma\}\times \Z^{\Verts(\Gamma)}.\]

If $\sigma$ is a degree~$d$ assignment for the graph $\Gamma$ and $\gammazero \subseteq \Gamma$ is a subgraph, we define
\[\sigma(\gammazero) := \{\underline{\mathbf d}:\;(\gammazero,\underline{\mathbf d})\in\sigma\} \subset S^d_{\Gamma}(\gammazero)\] 
(and the latter is empty unless $\gammazero$ is connected and spanning).
\end{definition}

\section{Fine compactified Jacobians}

\label{secfinejac} 
In this section we introduce the notion of a (smoothable) fine compactified Jacobian for a nodal curve (Definition~\ref{def:finecompjac}), and for a flat family of nodal curves (Definition~\ref{def:familyfinecompjac}).

Let $\mathcal{X}/S$ be a flat family of nodal curves over a $k$-scheme $S$. Then there is an algebraic space $\Pic^d(\mathcal{X}/S)$ parameterizing line bundles on $\mathcal{X}/S$ of relative degree~$d$ (see \cite[Chapter~8.3]{blr}).   By \cite{AK} and \cite{esteves} the space $\Pic^d(\mathcal{X}/S)$ embeds in an algebraic space $\Simp^{d}(\mathcal{X}/S)$ parameterizing  flat families of degree~$d$ rank~$1$ torsion-free simple sheaves on $\mathcal{X}/S$. The latter is locally of finite type  over $S$ and satisfies the existence part of the valuative criterion of properness.  However, it can fail to be of finite type and separated. 
In the special case of $S=\Spec(K)$ and $X=\mathcal{X}/S$ we will simply write $ \Pic^d(X)$ (resp. $\Simp^d(X)$) for $\Pic^d(\mathcal{X}/S)$ (resp. for $\Simp^d(\mathcal{X}/S)$).

Let $X$ be a nodal curve over some field extension $K$ of $k$. The main point of this paper is to describe well-behaved subspaces of $\Simp^{d}(X)$,
generalizing existing notions of compactified Jacobians in the literature.  Specifically, we study the following subschemes.

\begin{definition} \label{def:finecompjac}
	A \textbf{degree $d$ fine compactified Jacobian} 
 is a geometrically connected open subscheme $\fcj\subseteq\Simp^{d}(X)$ that is proper over $\spec(K)$.

We say that the fine compactified Jacobian $\fcj$ is {\bf smoothable} if there exists a regular smoothing $\mathcal X\to\Delta$ of $X$, where $\Delta$ is the spectrum of a DVR  with residue field $K$, such that $\fcj$ is the fiber over $0\in\Delta$ of an open and $\Delta$-proper subscheme of $\Simp^{d}(\mathcal{X}/\Delta)$.

\end{definition}
Note that the fiber over the generic point $\eta$ of a nonempty, open and $\Delta$-proper subscheme of $\Simp^{d}(\mathcal{X}/\Delta)$ is necessarily the moduli space of degree $d$ line bundles $\Pic^d(\mathcal{X}_{\eta}/{\eta})$. As openness and properness are stable under base change, the fiber over $0 \in \Delta$ 
 is open in $\Simp^d(X)$ and $K$-proper. The axiom  ``geometrically connected'' is redundant in the smoothable case, because the moduli space of degree~$d$ line bundles on the generic point is geometrically connected and dense in $\Simp^{d}(\mathcal{X}/\Delta)$. 

\begin{remark} It follows from Lemma~\ref{properoverdelta} (combined with Proposition~\ref{exists-forall}) that requiring  the subscheme $\fcj \subseteq \Simp^d(X)$ to extend to \emph{some regular} smoothing of the curve is equivalent to requiring that it extends to \emph{all} smoothings.
\end{remark}

In this paper, we will focus on smoothable fine compactified Jacobians, since they are better behaved and occur more often in applications.
\begin{remark}
When $K=k$ is algebraically closed and $\operatorname{char}(k)=0$, Definition~\ref{def:finecompjac} coincides with \cite[Definitions~2.1, 2.4]{paganitommasi}  (by passing to the completion of the DVR).

    In \cite[Section~3]{paganitommasi} the authors give a complete classification of fine compactified Jacobians of curves of genus~$1$, showing in particular the existence of nonsmoothable examples.
\end{remark}

\begin{example} \label{ex: irred}
If $X$ is a geometrically irreducible curve over $K$, then $\Simp^d(X)$ is proper over $K$, so the only degree~$d$ fine compactified Jacobian is $\Simp^d(X)$ itself. These Jacobians are always smoothable. (See Examples~ \ref{ex: stab-irred} and \ref{irred-is-OS} for the corresponding unique stability assignment).
\end{example}

\begin{example} \label{ex: vinecurves}
In the case of curves with two geometrically irreducible components, fine compactified Jacobians are no longer irreducible. Assume for simplicity that $X$ is a {\bf vine curve of type $t$}, that is, the union of two nonsingular curves intersecting transversely at $t$ nodes.
 We will later see in Example~\ref{vinecurves-nonsm} that every fine compactified Jacobian of $X$ is smoothable, and that it consists of $t$ irreducible components whose generic points correspond to line bundles of consecutive bidegrees.
\end{example}

\begin{remark}
    \label{sec:strat} The moduli space $\Simp^d(X)$ of a nodal curve $X$ admits a natural stratification (see for example \cite{mv}) into locally closed subsets
    \begin{equation} \label{eq: strata} \Simp^d(X) = \bigsqcup_{(\Gamma_{0},\underline{\mathbf d})}\mathcal J_{(\Gamma_{0},\underline{\mathbf d})}\end{equation}
where the union runs over  all connected spanning subgraphs $\Gamma_{0}\subseteq \Gamma(X)$ and all multidegrees $\underline{\mathbf d}\in S^d_{\Gamma(X)}(\gammazero)$.
Each subspace $\mathcal J_{(\Gamma_{0},\underline{\mathbf d})}\subset \Simp^d(X)$ is defined as the locus whose points are sheaves $F$ that fail to be locally free on $\NF(F)=\Edges(\Gamma) \setminus \Edges(\Gamma_{0})$, and whose multidegree $\underline{\deg}(F)$ equals $ \underline{\bf d}$.
\end{remark}

We now extend the notion of a fine compactified Jacobian to the case of a family of nodal curves $\mathcal{X}/S$. Recall that the moduli space $\Simp^d(\mathcal{X}/S)$ is also defined in \cite{melouniversal} when $\mathcal{X}/S$ is the universal curve $\Cb_{g,n}/\Mb_{g,n}$ over the moduli stack of stable $n$-pointed curves of arithmetic genus $g$. In this case $\Simp^d(\Cb_{g,n}/\Mb_{g,n})$ is a Deligne--Mumford stack representable (by algebraic spaces) and flat over $\Mb_{g,n}$.

\begin{definition} \label{def:familyfinecompjac} Assume that $S$ is irreducible with generic point $\theta$, and assume that the generic fiber $\mathcal{X}_\theta/\theta$ is smooth.

A \textbf{family of degree~$d$ fine compactified Jacobians} for the family $\mathcal{X}/S$ is an open algebraic subspace  $\fcjs \subseteq \Simp^d(\mathcal{X}/S)$ that is proper over $S$.

We say that a degree~$d$ fine compactified Jacobian $\fcuj \subset \Simp^d(\Cb_{g,n}/\Mb_{g,n})$ is a \textbf{degree~$d$ fine compactified universal Jacobian for the universal curve over $\Mb_{g,n}$}. (We will often omit to specify ``for the universal curve over $\Mb_{g,n}$'', when clear from the context).
\end{definition}

Note that the assumption that the generic fiber is smooth implies that all fibers over $S$ of a degree~$d$ fine compactified Jacobian are smoothable.

\section{Stability assignments for smoothable fine compactified Jacobians} 

Here we define the combinatorial data identifying \emph{smoothable} fine compactified Jacobians. We first do so for a single nodal curve, that is, for a fixed dual graph (Definition~\ref{finejacstab}), and then we generalize the definition to families (Definition~\ref{familyfinejacstab}). 

If $X$ is a nodal curve over an algebraically closed field, a sheaf $F\in\Simp^d(X)$ has two natural combinatorial invariants, given by its multidegree and by the subset $\NF(F)\subseteq \operatorname{Sing}(X)=\Edges(\Gamma(X))$ of points of the curve where $F$ fails to be locally free.  Hence it makes sense to study a fine compactified Jacobian $\fcj$ on $X$ by looking at all pairs $\left(\gammazero(F)= \Gamma(X) \setminus \NF(F),\underline{\deg}(F)\right)$ with $F\in\fcj$. Recall that, with the notation introduced in Equation~\ref{sgamma}, we can regard $\underline{\deg}(F)$ as an element of  the space of multidegrees $S_{\Gamma(X)}^d(\gammazero(F))$.


For a single curve $X$, we identify, in Definition~\ref{finejacstab}, the two properties characterizing the set of such pairs. One is related to properness, combined with the smoothability of the Jacobian, and it requires that the set of stable multidegrees  should be a minimal complete set of representatives for the natural chip-firing action on the dual graph (see Definition~\ref{def:chipfiring}). The other corresponds to openness. In combinatorial terms, this means that if we add an edge $e$ to $\gammazero$, the set of stable multidegrees on $\Gamma\cup\{e\}$ should contain all multidegrees obtained by ``adding a chip'' to either endpoints of $e$.

For a family of curves, we further require compatibility with all contractions of the dual graphs involved. 

\subsection{Stability assignments for a single curve}

We start by introducing the twister group of a graph, which will play a role in characterizing smoothable compactified Jacobians.
\begin{definition}\label{def:chipfiring}
Let $G$ be a graph.
For each $v \in \Verts(G)$, define the \emph{twister of} $G$ at $v$ to be the element of $\Z^{\Verts(G)}$ defined by
\[
\tw_{G, v} (w) = \begin{cases}  \text{ \# of edges of } G \text{ having } v \text{ and } w \text{ as endpoints }& \text{when } w \neq v, \\ - \text{ \# of nonloop edges of } G \text{ having } v \text{ as an endpoint} & \text{when } w =v.\end{cases}
\]

The \textbf{twister group} (or chip-firing group) $\tw(G)$ is the subgroup of $\Z^{\Verts(G)}$ generated by the set $\Set{\tw_{G, v}}_{ v \in \Verts(G)}$. 
\end{definition}

Recall from Equation~\eqref{sgamma} the definition of the space of multidegrees $S^d_{G}(G)$ of total degree equal to $d$. The twister group of $G$ is contained in the sum zero submodule $S^0_{G}(G)$ of $\Z^{\Verts(G)}$. Hence the group structure on $\Z^{\Verts(G)}$ restricts to an action of $\tw(G)$ on $S^d_{G}(G)$. The quotient  group
\begin{equation}\label{jacobiangraph} J^d(G):=S^d_{G}(G)/\tw(G)\end{equation}
is then a torsor over $J^0(G)$, which is a finite abelian group. The latter is also known as the {\bf Jacobian}  of the graph~$G$, and it has a number of element equal to the complexity $c(G)$ of the graph $G$. (It is also known by other names in the literature, 
such as the \emph{degree class group}, or the \emph{sandpile group}, or the \emph{critical group} of the graph $G$).

\begin{remark} \label{twister} Let $\mathcal{X}$ be a regular smoothing of $X$ over some discrete valuation ring $\Delta$ with generic point $\eta$. Let $T$ be the image under the restriction map  $\pic(\mathcal X)\rightarrow \pic(X)$ of the kernel of the surjection $\pic(\mathcal X)\rightarrow \pic(\mathcal X_\eta)$ (the restriction to the generic point). We claim that the restriction to $T$ of the multidegree homomorphism $\pic(X) \to \Z^{\Verts(\Gamma)}$  defines an isomorphism $T \to \tw(\Gamma)$.  


Since $\mathcal X$ is regular, 
the irreducible components $\{X_v\}_{v\in\Verts(\Gamma)}$ of $X$ are Cartier divisors on $\mathcal X$ and it is easy to check that the elements of $T$ are of the form 
\[
\mathcal O_{\mathcal X}\left(\sum_{v\in\Verts(\Gamma)}{d_vX_v}\right)\otimes \mathcal O_X
\]
with $\underline{\mathbf d}\in\Z^{\Verts(\Gamma)}$. One can explicitly compute that the restriction to $T$ of the multidegree map $\pic(X)\rightarrow\Z^{\Verts(\Gamma)}$ is given by
\[\begin{array}{rcl}
\mathcal O_{\mathcal X}\left(\sum_{v\in\Verts(\Gamma)}{d_vX_v}\right)\otimes \mathcal O_X&\longmapsto&\sum_{v\in\Verts(\Gamma)}d_v\tw_{\Gamma,v}.
\end{array}\]
This homomorphism  is injective by \cite[(5.2)]{raynaud70}, and its image is $\tw(\Gamma)$ (by definition). 
This proves that $T \to \tw(\Gamma)$ is an isomorphism. In particular, $T$ as an abstract group is independent of the choice of the regular smoothing (though one could see that the embedding $T \subset \Pic(X)$ is not independent of the smoothing).
 \end{remark}

We are now ready to define our notion of a stability assignment.
\begin{definition} \label{finejacstab} 

A {\bf degree~$d$ smoothable fine compactified Jacobian stability assignment}, or shortly a {\bf degree $d$ stability assignment}  for the graph $\Gamma$ is a degree~$d$ assignment for $\Gamma$ (as in Definition~\ref{def: assignment}) that satisfies the following conditions:
\begin{enumerate}
\item\label{firstfjs} For all edges $e$ of $\Edges(\Gamma)\setminus\Edges(\gammazero)$ with endpoints $v_1$ and $v_2$ we have
\[
(\gammazero,\underline{\mathbf d})\in\sigma \Rightarrow (\gammazero\cup\{e\},\underline{\mathbf d}+\underline{\mathbf e}_{v_1}),(\gammazero\cup\{e\},\underline{\mathbf d}+\underline{\mathbf e}_{v_2})\in\sigma, 
\]
where $\underline{\mathbf e}_{v_i}$ denotes the vector in the standard basis of $\Z^{\Verts(\Gamma)}$ corresponding to $v_i$.
\item\label{secondfjs} For every connected spanning subgraph $\gammazero$, the subset  
\[\sigma(\gammazero) := \{\underline{\mathbf d}:\;(\gammazero,\underline{\mathbf d})\in\sigma\} \subset S^d_{\Gamma}(\gammazero)\] is a minimal complete set of representatives for the action of the twister group $\tw(\gammazero)$ on $S^d_{\Gamma}(\gammazero)$.
\end{enumerate}

If $X$ is a nodal curve, a degree $d$ stability assignment on $X$ is  a degree $d$ stability assignment on its dual graph $\Gamma(X)$.
\end{definition}
Note that the genus of each of the components of $X$ does not play any role in the above definition.

\begin{remark} \label{complexity} 
With the notation as above, the number of elements of  $\sigma(\gammazero)$ equals the number of elements of the Jacobian $J^0(\gammazero)$. 
By the Kirchoff--Trent theorem, this number equals the complexity $c(\gammazero)$ of the graph $\gammazero$. In particular, it is finite.
\end{remark}

It is in general hard to classify all stability assignments on a given stable graph. However, the task is within reach when the number of vertices is small.
\begin{example} \label{ex: stab-irred}
If $\Gamma$  only has $1$ vertex (i.e. it is the dual graph of an irreducible curve), there is exactly $1$ stability assignment on $\Gamma$. If there are $t$ edges, the unique degree~$d$ stability assignment is \[\sigma=\bigcup_{0 \leq i \leq t} \bigcup_{E \subseteq \Edges(\Gamma), |E|=i} \set{(\Gamma \setminus E,d-i)}.\]
\end{example}
\begin{example}
\label{vinestab}
If instead $\Gamma$ consists of $2$ vertices $v_1, v_2$ connected by $t$ edges, and no other edges (i.e. $\Gamma$ is the dual graph of a vine curve of type $t$, see Example~\ref{ex: vinecurves}), let  $\Gamma_1, \ldots, \Gamma_t$ be the spanning trees. Then it follows from the definition that for every stability assignment $\sigma$ there exists a unique integer $\lambda$ such that

\begin{enumerate}
    \item  $\sigma(\Gamma_i)=\set{(\lambda,d+1-\lambda-t)}$ for all $i=1, \ldots, t$;
\item $\sigma(\Gamma)=\set{(\lambda, d-\lambda) ,(\lambda+1, d-\lambda-1), \ldots, (\lambda+t-1, d+1-\lambda-t)}$. \end{enumerate}\end{example}

The following result will be used later as a tool to prove that certain collections are stability assignments. It was first proved by Barmak~\cite{barmak}. 
A first publicly available proof of part of this result, based upon Barmak's argument, appeared in Yuen's Phd thesis,  \cite[Theorem~3.5.1]{chiho}.

\begin{proposition} \label{barmak} (\cite[Theorem~1.17 Part~(1) and (2.a)]{viviani}).
    Let $\sigma$ be a degree~$d$ assignment on the graph $\Gamma$ (see Definition~\ref{def: assignment}), and assume that $\sigma$ satisfies Part~(1) of Definition~\ref{finejacstab}, and that $\sigma(T)$ is nonempty for every spanning tree $T \subseteq \Gamma$. 
    
    Then for all spanning subgraphs $\gammazero \subseteq \Gamma$, we have $|\sigma(\gammazero)| \geq c(\gammazero)$ (the complexity of $\gammazero$). Moreover, if $|\sigma(\Gamma)|=c(\Gamma)$, then $|\sigma(\gammazero)|= c(\gammazero)$ for each spanning subgraph $\gammazero \subseteq \Gamma$.
\end{proposition}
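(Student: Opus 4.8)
The plan is to peel off the formal reductions and then isolate the single genuinely combinatorial input, which is the result of Barmak~\cite{barmak}. To begin with, if $\gammazero\subseteq\Gamma$ is disconnected then $c(\gammazero)=0$ and $\sigma(\gammazero)=\varnothing$ by Definition~\ref{def: assignment}, so both assertions hold trivially; hence I may assume $\gammazero$ is connected and spanning. The key point is that passing to subgraphs is harmless: for a fixed connected spanning $\Gamma_0\subseteq\Gamma$, the collection $\sigma|_{\Gamma_0}:=\{(\gammazero,\underline{\mathbf d})\in\sigma:\gammazero\subseteq\Gamma_0\}$ is a degree-$(d-n_\Gamma(\Gamma_0))$ assignment on $\Gamma_0$ which again satisfies Part~(1) of Definition~\ref{finejacstab} and has $\sigma|_{\Gamma_0}(T)=\sigma(T)\neq\varnothing$ for every spanning tree $T$ of $\Gamma_0$ (the degree bookkeeping works out because $n_\Gamma(\gammazero)-n_{\Gamma_0}(\gammazero)=|\Edges(\Gamma)|-|\Edges(\Gamma_0)|$ does not depend on $\gammazero$). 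Consequently it suffices to prove
\begin{enumerate}
\item[(a)] $|\sigma(\Gamma)|\geq c(\Gamma)$ for every $\sigma$ as in the statement, and
\item[(b)] if $|\sigma(\Gamma)|=c(\Gamma)$ then $|\sigma(\Gamma\setminus e)|=c(\Gamma\setminus e)$ for every edge $e$ whose deletion leaves $\Gamma$ connected.
\end{enumerate}
Indeed, (a) applied to $\sigma|_{\Gamma_0}$ gives $|\sigma(\Gamma_0)|\geq c(\Gamma_0)$ for all $\Gamma_0$; and for the second assertion, given $\Gamma_0\subsetneq\Gamma$ connected spanning, one enumerates $\Edges(\Gamma)\setminus\Edges(\Gamma_0)=\{e_1,\dots,e_k\}$, notes that each $\Gamma^{(j)}:=\Gamma\setminus\{e_1,\dots,e_j\}$ contains $\Gamma_0$ (so is connected spanning, and deleting $e_{j+1}$ from $\Gamma^{(j)}$ keeps it connected), and applies~(b) to $\sigma|_{\Gamma^{(j)}}$ for $j=0,\dots,k-1$, propagating the equality from $j=0$ down to $j=k$.

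For~(a), I would show that the composite $\sigma(\Gamma)\hookrightarrow S^d_\Gamma(\Gamma)\twoheadrightarrow J^d(\Gamma)$ is surjective; since $|J^d(\Gamma)|=c(\Gamma)$, this gives~(a). The mechanism is Part~(1) of Definition~\ref{finejacstab}: for each spanning tree $T$ fix some $\underline{\mathbf d}_T\in\sigma(T)$, and add the $g:=b_1(\Gamma)$ edges of $\Edges(\Gamma)\setminus\Edges(T)$ back one at a time, each time choosing an endpoint $x_f$ of the edge $f$ being added; iterating Part~(1) yields $\bigl(\Gamma,\ \underline{\mathbf d}_T+\sum_{f\notin T}\underline{\mathbf e}_{x_f}\bigr)\in\sigma$ for every such choice. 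Modulo $\tw(\Gamma)$, these multidegrees form a translate $[\underline{\mathbf d}_T]+B_T\subseteq J^d(\Gamma)$ of the ``subset-sum set'' $B_T:=\{\sum_{f\notin T}[\underline{\mathbf e}_{x_f}]\}\subseteq J^g(\Gamma)$, which is closely related to the set of break divisors of $\Gamma$. Thus~(a) reduces to the purely combinatorial claim that, as $T$ ranges over the $c(\Gamma)$ spanning trees of $\Gamma$, the translates $[\underline{\mathbf d}_T]+B_T$ cover $J^d(\Gamma)$; controlling the interplay between the ``boxes'' $B_T$ and the \emph{a priori} unknown shifts $[\underline{\mathbf d}_T]$ is the crux. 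This is exactly the combinatorial problem solved by Barmak~\cite{barmak} (see also \cite[Theorem~3.5.1]{chiho} and \cite[Theorem~1.17]{viviani}), which I would invoke here.

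For~(b), one re-runs the argument of~(a) under the extra hypothesis $|\sigma(\Gamma)|=c(\Gamma)$: the covering of $J^d(\Gamma)$ just described is then forced to be minimal, so $\sigma(\Gamma)$ is a complete set of representatives for the action of $\tw(\Gamma)$, and this rigidity propagates to give the reverse inequality $|\sigma(\Gamma\setminus e)|\leq c(\Gamma\setminus e)$; combined with~(a) applied to $\sigma|_{\Gamma\setminus e}$ this yields the asserted equality. This refinement is Part~(2.a) of \cite[Theorem~1.17]{viviani}. In summary, the only non-formal ingredient is Barmak's covering estimate in step~(a); everything else, namely the restriction to subgraphs and the chaining of single edge-deletions, is routine.
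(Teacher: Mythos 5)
The paper gives no internal proof of this proposition: it is imported verbatim from \cite[Theorem~1.17, Parts~(1) and~(2.a)]{viviani} (Barmak's argument), so what is really under review is the scaffolding you build around that citation. Your formal reductions are sound: restricting $\sigma$ to a connected spanning subgraph $\Gamma_0$ does yield a degree-$(d-n_\Gamma(\Gamma_0))$ assignment on $\Gamma_0$ satisfying the same hypotheses, and chaining single edge deletions correctly propagates the equality in the ``moreover'' part. The genuine gap is in your step~(a). The claim that $\sigma(\Gamma)\to J^d(\Gamma)$ is surjective --- equivalently, that the translates $[\underline{\mathbf d}_T]+B_T$ cover $J^d(\Gamma)$ --- is \emph{false} under the stated hypotheses, and it is not what Barmak proves. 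Take $\Gamma=K_3$ with vertices $v_1,v_2,v_3$ and edges $e_{12},e_{13},e_{23}$, so $c(\Gamma)=3$ and $J^d(\Gamma)\cong\Z/3\Z$ via $(a,b,c)\mapsto a-b \bmod 3$. The spanning trees $T_1,T_2,T_3$ (omitting $e_{23},e_{13},e_{12}$ respectively) have perturbation sets $B_{T_1}=\{2,0\}$, $B_{T_2}=\{1,0\}$, $B_{T_3}=\{1,2\}$ in $\Z/3\Z$. Since Part~(1) of Definition~\ref{finejacstab} imposes no relation among the values of $\sigma$ on distinct trees, you may choose $\underline{\mathbf d}_{T_1},\underline{\mathbf d}_{T_2},\underline{\mathbf d}_{T_3}$ with classes $2,1,0$; then $\bigcup_i\left([\underline{\mathbf d}_{T_i}]+B_{T_i}\right)=\{1,2\}$ misses the class $0$, so the minimal assignment generated by these choices does not surject onto $J^d(K_3)$. (One checks it still has at least $3$ elements on $K_3$, so the proposition is not contradicted.) The moral is that the inequality $|\sigma(\Gamma)|\geq c(\Gamma)$ does not factor through surjectivity onto the Jacobian group; the content of Barmak's theorem is the cardinality bound itself, proved by a direct counting argument, not a covering argument.

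This error propagates into your step~(b): ``the covering is forced to be minimal, so $\sigma(\Gamma)$ is a complete set of representatives'' presupposes the false surjectivity. The implication from $|\sigma(\Gamma)|=c(\Gamma)$ to $\sigma(\Gamma)$ being a complete set of representatives is true, but it is a separate nontrivial result (\cite[Theorem~1.20]{viviani}, which the paper invokes in the proof of Proposition~\ref{coroll: chipfiring}), not a formal consequence of your (a). Since you end up citing \cite[Theorem~1.17]{viviani} for both halves anyway, the proposal is salvageable by deleting the surjectivity discussion and quoting the theorem directly, exactly as the paper does; but as written, the one piece of mathematical substance you add beyond the citation is incorrect.
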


\subsection{Stability assignments for families} \label{Sec: families} So far we have  discussed the notion of a stability assignment for a curve in isolation. The flatness condition for families of sheaves imposes an additional compatibility constraint.

\begin{definition} \label{combo-family} Let $f\colon \Gamma \to \Gamma'$ be a morphism of graphs and let $\sigma$ and $\sigma'$ be degree $d$ stability assignments on $\Gamma$ and $\Gamma'$, respectively. 
We say that $\sigma$ is $f$-compatible with  $\sigma'$ if for every connected spanning subgraph $\gammazero$ of $\Gamma$, we have
\[(\gammazero,\underline{\mathbf d})\in\sigma \implies (\gammazero',\underline{\mathbf d'}) \in \sigma',  \]
where $\gammazero'$ is the image of $\gammazero$ under $f$ (and therefore it is a connected spanning subgraph of $\Gamma'$), and $\underline{\mathbf d'}$ is defined by
\begin{equation} \label{comp-contractions}
\underline{\mathbf d'}(w) = \sum_{f(v)=w} \underline{\mathbf d}(v) + \#  \left\{\textrm{edges of } \Gamma\setminus\gammazero \textrm{ that are contracted to } w \textrm{ by } f\right\}.
\end{equation}
\end{definition}

Note that the notion of $f$-compatibility only depends upon the map that $f$ induces on the set of vertices of the two graphs. We are now ready for the definition of the compatibility constraint.

\begin{definition} \label{familyfinejacstab}
Let $\mathcal{X}/S$ be a family of nodal curves. A \textbf{family of degree $d$ stability assignments (for fine compactified Jacobians)} for $\mathcal X/S$ 
consists of associating a degree $d$ stability assignment $\sigma_s$ on $\Gamma(X_s)$ (as in Definition~\ref{finejacstab}) with every geometric point $s$ in $S$, in a way that is compatible with all morphisms $\Gamma(X_s) \to \Gamma(X_t)$ arising from any \'etale specialization $t \rightsquigarrow s$ occurring on $S$.
\end{definition}

We will say that a family of degree~$d$ stability assignments for the  universal family $\overline{\mathcal{C}}_{g,n}$ over $\Mb_{g,n}$ is a {\bf degree~$d$ universal stability assignment of type $(g,n)$} (and often omit ``of type $(g,n)$'' when clear from the context). These stability assignments will be studied in Section~\ref{sec: univ}. 

Universal stability assignments can be defined purely in terms of the category $G_{g,n}$ of stable graphs:

\begin{remark} \label{Rem: univ stab}
In the case of universal stability assignments, the \'etale specializations of points of $\Mb_{g,n}$ induce all morphisms in the category $G_{g,n}$ of stable graphs of genus $g$ with $n$ marked half-edges. 

In particular, if $\sigma$ is a degree~$d$ universal stability assignment of type $(g,n)$ and  $\alpha \colon \Gamma(X_1) \to \Gamma(X_2)$ is an isomorphism of the dual graphs of two pointed curves $[X_1],[X_2]\in\Mb_{g,n}$, then $\alpha$ identifies $\sigma_{[X_1]}$ with $\sigma_{[X_2]}$.

We conclude that a degree $d$ universal stability assignment of type $(g,n)$  is a collection $\Set{\sigma_\Gamma}_{\Gamma \in G_{g,n}}$ such that $\sigma_\Gamma$ is a degree $d$ stability assignment on $\Gamma$ and $\sigma_{\Gamma}$ is $f$-compatible with $\sigma_{\Gamma'}$ for any morphism $f:\;\Gamma\rightarrow \Gamma'$ in $G_{g,n}$.
\end{remark}

\section{Combinatorial preparation: perturbations and lifts}

This section contains combinatorial results that will be used in later proofs. First we introduce the notion of a perturbation for the multidegree of a sheaf that fails to be locally free at some nodes. This corresponds to deforming a simple sheaf into a line bundle (Lemma~\ref{impliedbyopen}). Then we investigate the relationship between the chip-firing action on a graph $\Gamma$ and the chip-firing action on the graph obtained from $\Gamma$ by subdividing each of its edges a certain number of times. In terms of dual graphs, this construction corresponds to blowing up a certain number of times the nodes of our curve. Our main techincal result here is Proposition~\ref{coroll: chipfiring}.

\begin{definition} \label{def: pert}
Let $\gammazero \subseteq \Gamma$ be a connected spanning subgraph. 
A \emph{$\gammazero$-perturbation} is  an element   in $S^{n(\gammazero)}_{\gammazero}(\gammazero)$ (for $n(\gammazero)=|\operatorname{Edges}(\Gamma) \setminus \operatorname{Edges}(\gammazero)|$) of the form \[ \sum_{e \in \Edges(\Gamma) \setminus \Edges(\gammazero)} {\underline{\mathbf e}}_{t(e)} \] where $t$ is some choice of an orientation on $\Edges(\Gamma) \setminus \Edges(\gammazero)$. 
\end{definition}
(If $D$ is a $G$-perturbation for some $G$, then $D$ is also known in the literature as a semibreak divisor, see \cite[Definition~3.1]{gst}).

By the description in Remark~\ref{sec:strat} of the stratification of $\Simp^d(X)$, and by Lemma~\ref{impliedbyopen},  if $\gammazero$ is a connected spanning subgraph of $\Gamma$, a $\gammazero$-perturbation is the same as the multidegree of a line bundle on $X$ that specializes to a sheaf $F$ with $\gammazero(F)=\gammazero$, $\underline{\deg}(F)=\underline{\mathbf 0}$. We are now ready for the following technical lemma.

\begin{lemma} \label{impliedbyopen}
Let $\mathcal X$ be a family of nodal curves over the spectrum $\Delta$ of a DVR with algebraically closed residue field $\overline{K}$, and consider a section $\sigma:\;\Delta\rightarrow \Simp^d(\mathcal X/\Delta)$. 
Let us fix a geometric point $\overline\eta$ lying over the generic point of $\Delta$ and denote by $\Gamma$ and $\Gamma'$ the dual graphs of the geometric fibers $X_0$ and $X_{\overline\eta}$ of $\mathcal X$, respectively, and by $f:\;\Gamma\rightarrow \Gamma'$ the morphism of graphs induced by the specialization of $\overline\eta$ to $0$. 
Write $F_0$ and $F_{\overline{\eta}}$ for~$\sigma(0)$  and $\sigma(\overline\eta)$, respectively, and set $\gammazero=\gammazero(F_0)\subseteq \Gamma$ and $\gammazero'=\gammazero(F_{\overline{\eta}})\subseteq \Gamma'$ for the connected subgraphs where the corresponding sheaf is locally free.  Then  there exists a $\gammazero\subseteq f^{-1}(\gammazero')$-perturbation $T$ such that the multidegrees $\underline{\mathbf d}=\underline{\deg}(F_0)$ and $\underline{\mathbf d}' = \underline{\deg}(F_{\overline\eta})$ satisfy the relation \[\underline{\mathbf d}'= f\left(\underline{\mathbf d}+T\right).\]
\end{lemma}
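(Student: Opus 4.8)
The plan is to analyze how the section $\sigma \colon \Delta \to \Simp^d(\mathcal{X}/\Delta)$ degenerates the generic line bundle $F_{\overline\eta}$ to the sheaf $F_0$, and to read off the multidegree relation from the geometry of the total space of $\mathcal{X}$ together with a local model near each node.

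First I would set up the comparison of Picard groups and multidegrees. Passing to a regular model or working \'etale-locally, each node of $X_0$ that persists (i.e.\ is not smoothed in the family) corresponds to an edge of $\Gamma$ not contracted by $f$, while each node that gets smoothed corresponds to a contracted edge. For the smoothed nodes, the total space $\mathcal{X}$ looks like $\spec \overline{K}[[x,y,t]]/(xy - t^k)$ for some $k\geq 1$; after the normalization/blow-up needed to make $\mathcal{X}$ regular, the exceptional data contributes to the multidegree computation in the usual way. The key is to recall (cf.\ Remark~\ref{twister} and the structure of the stratification in Remark~\ref{sec:strat}) that a rank~$1$ torsion-free sheaf $F_0$ with $\gammazero(F_0) = \gammazero$ lies in the closure of the locus of line bundles, and by Lemma~\ref{impliedbyopen}'s own statement (which is what we are proving — so here I mean the description preceding it) a line bundle specializing to $F_0$ has multidegree of the form $\underline{\mathbf 0} + (\text{a } \gammazero\text{-perturbation})$ up to twisting.

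The core step is the following: choose a $\Delta$-flat extension (or, after a ramified base change, assume $F_{\overline\eta}$ extends as a line bundle $\mathcal{L}$ on the regular model, using the valuative criterion satisfied by $\Simp^d$); then $F_0$ and the special fiber $\mathcal{L}_0$ of $\mathcal{L}$ differ by a twist along components of $X_0$, i.e.\ their multidegrees differ by an element of $\tw(\Gamma)$ — but since $\sigma(0) = F_0$ by hypothesis this twist is trivial, so $\underline{\deg}(F_0) = \underline{\deg}(\mathcal{L}_0)$. Now $\mathcal{L}_0 = \mathcal{L} \otimes \mathcal{O}_{X_0}$, and restricting a line bundle from the regular total space to the special fiber and comparing with its restriction to the generic fiber is exactly the computation: the degree on each component $X_v$ of $X_0$ picks up, for each node of $X_0$ where $F_0$ fails to be locally free (edge of $\Gamma\setminus\gammazero$), a contribution of $+1$ at exactly one of the two endpoints, this being precisely the orientation $t$ defining the $\gammazero$-perturbation $T$; and the degrees on the generic fiber $X_{\overline\eta}$ are obtained from those on $X_0$ by the push-forward formula~\eqref{comp-contractions}, i.e.\ by summing over fibers of $f$ and adding $1$ for each contracted non-locally-free edge. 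Assembling these local contributions gives $\underline{\mathbf d}' = f(\underline{\mathbf d} + T)$, and checking that $T$ is supported on (and hence is a perturbation for) the subgraph $\gammazero \subseteq f^{-1}(\gammazero')$ amounts to noting that $\NF(F_0)$ maps into $\NF(F_{\overline\eta})$ under specialization — a local-model check at each node.

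The main obstacle I anticipate is the bookkeeping at the non-locally-free nodes that are simultaneously being smoothed in the family: there one must simultaneously handle (a) the resolution of the $A_{k-1}$-singularity of the total space, (b) the way the torsion-free-but-not-locally-free structure of $F_0$ at that node interacts with the chain of exceptional curves, and (c) how the resulting multidegree distributes under the contraction map $f$. Getting the signs and the "$+1$" terms exactly right — and in particular verifying that the correction vector is honestly a $\gammazero \subseteq f^{-1}(\gammazero')$-perturbation rather than some larger divisor — is where the real care is needed; everything else is a standard restriction-of-line-bundles-on-a-regular-surface argument, and the triviality of the twister ambiguity is forced by the hypothesis that the section actually passes through $F_0$.
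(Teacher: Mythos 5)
There is a genuine gap, and it lies in the generality of the setup. Your argument treats $F_{\overline\eta}$ as a line bundle and proposes to work on a regular model of the total space $\mathcal X$ obtained by resolving the $A_k$-singularities at the smoothed nodes. But the lemma is stated, and is actually used, for an arbitrary section of $\Simp^d(\mathcal X/\Delta)$: the geometric generic fiber $X_{\overline\eta}$ may itself be nodal, and $F_{\overline\eta}$ may fail to be locally free at the nodes in $\Gamma'\setminus\gammazero'$ (this is exactly the situation in Corollary~\ref{cond1}, where the family is the constant family $X\times\Delta$ over a nodal $X$ and the generic sheaf is still non-locally-free on $\Gamma\setminus(\gammazero\cup\{e\})$). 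In that case there is no regular model in your sense: near a node that persists to the generic fiber the total space looks like $R[[x,y]]/(xy)$, which cannot be resolved while keeping the fibration a family of nodal curves, and $F_{\overline\eta}$ cannot ``extend as a line bundle'' since it is not one. The paper avoids this entirely via Esteves--Pacini: it passes to the projectivization $\mathcal Y=\mathbf P_{\mathcal X}(\mathcal F)$, which in \emph{every} fiber inserts a rational component at each point of $\NF(F_s)$ and carries a tautological line bundle $\mathcal L$ with $\mathcal F=\psi_*\mathcal L$, of degree $1$ on each exceptional vertex. The claim then reduces to the standard specialization of multidegrees of a \emph{line bundle} in the family $\mathcal Y/\Delta$; the perturbation $T$ records where the $+1$ on an exceptional vertex lands when that vertex is absent from (or contracted in) the generic dual graph. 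This single construction is the missing idea in your proposal.

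A secondary problem is the step ``$F_0$ and $\mathcal L_0$ differ by a twist, but since $\sigma(0)=F_0$ this twist is trivial.'' If you choose an auxiliary extension $\mathcal L$ of the generic sheaf by the valuative criterion, its special fiber need not agree with $F_0$: $\Simp^d$ is not separated, and two extensions of the same generic sheaf genuinely do differ by a nontrivial twister in general. The hypothesis $\sigma(0)=F_0$ pins down the special fiber of the \emph{given} family $\mathcal F$, not of whatever extension you construct; to make your argument run you would have to produce a line bundle on the modified total space whose pushforward is $\mathcal F$ itself (which is again what the projectivization delivers for free), rather than argue after the fact that the twister ambiguity vanishes.
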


Note that if $\gammazero'\subseteq \Gamma'$ is connected and spanning, then so is $f^{-1}(\gammazero') \subseteq \Gamma$. 

\begin{proof}
The section $\sigma$ defines a rank $1$ sheaf $\mathcal F$ over $\mathcal X/\Delta$. By \cite[Proposition~5.5]{esteves-pacini}, the projectivization $\mathcal Y=\mathbf P_{\mathcal X}(\mathcal F)$ is a family of curves over $\Delta$ and there exists a line bundle $\mathcal L$ over $\mathcal Y/\Delta$ such that $\mathcal F = \psi_*\mathcal L$ holds for the natural map $\psi:\;\mathcal Y\rightarrow \mathcal X$. Geometrically, the restriction of $\psi$ to $Y_{\overline\eta}\rightarrow X_{\overline\eta}$ is the blow up that corresponds to adding a genus $0$ vertex on all edges of $\Gamma'$ that do not belong to $\gammazero'$. The description of $\psi_0:\;Y_0\rightarrow X_0$ in terms of $\Gamma$ and $\gammazero$ is completely analogous. Moreover, the multidegrees of $L_{\overline\eta}$ and $L_{0}$ can be obtained from the multidegrees of $F_{\overline\eta}$ and $F_{0}$, respectively, by taking $\underline{\mathbf d}'$ (resp. $\underline{\mathbf d}$) and assigning degree $1$ to the additional vertices. Then the  claim follows from the fact that $L_0$ is a specialization of the line bundle $L_{\overline\eta}$.
\end{proof}

We will now prove some combinatorial ingredients that relate the chip-firing on a graph  with the chip-firing on its  blow up.  We fix a graph $\Gamma$ and a function $m \colon \Edges(\Gamma) \to \mathbf{N}$ and let ${\tGamma}_m$ be the graph obtained  by subdividing each edge $e$ of $\Gamma$ into $m(e)+1$ edges (called an exceptional chain) by adding $m(e)$  vertices (called exceptional vertices) in the middle.

We start by relating the complexities of $\Gamma$ and $\tGamma$.


\begin{lemma} (\cite[Theorem~3.4]{bms})  \label{complexityblowup}
 The following formula relates the number of spanning trees (i.e. the complexity) of ${\tGamma}_m$ with the complexity of the spanning subgraphs of $\Gamma$:

\begin{equation} \label{eq: number}
c({\tGamma}_m)= \sum_{\substack{G \subseteq \Gamma  \text{ connected}\\ \text{ spanning subgraph}}} \left( \prod_{e \in \Edges(\Gamma) \setminus \Edges(G)} m(e)\right) \cdot c(G).
\end{equation}
\end{lemma}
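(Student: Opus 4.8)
The plan is to prove the identity \eqref{eq: number} by exhibiting an explicit bijection between the spanning trees of ${\tGamma}_m$ and the spanning trees of $\Gamma$ decorated with certain local choices, and then reorganizing the resulting sum. One may assume $\Gamma$ is connected: otherwise ${\tGamma}_m$ is disconnected so $c({\tGamma}_m)=0$, while $\Gamma$ has no connected spanning subgraph and the right-hand side is an empty sum.

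First I would record how a spanning tree $\widetilde T\subseteq {\tGamma}_m$ intersects a single exceptional chain $C_e$, namely the path over an edge $e=\{u,v\}$ of $\Gamma$ with $m(e)$ exceptional interior vertices and $m(e)+1$ edges. Each exceptional vertex has degree $2$ in ${\tGamma}_m$, with both of its edges inside $C_e$. Hence, if $\widetilde T$ omitted two distinct edges $f_a,f_b$ of the path $C_e$, the exceptional vertices lying strictly between them would form a nonempty proper subset $W\subsetneq\Verts({\tGamma}_m)$ whose only incident edges in ${\tGamma}_m$ crossing the cut are $f_a$ and $f_b$; since both are absent from $\widetilde T$, this contradicts connectedness of $\widetilde T$. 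So $\widetilde T$ omits at most one edge of $C_e$: either $C_e\subseteq \widetilde T$ (call $e$ \emph{closed}) or exactly one of the $m(e)+1$ edges of $C_e$ is omitted (call $e$ \emph{open}).

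Next I would compare edge counts. Since $\widetilde T$ is a tree, $|\Edges(\widetilde T)|=|\Verts({\tGamma}_m)|-1=|\Verts(\Gamma)|-1+\sum_{e}m(e)$; on the other hand $|\Edges(\widetilde T)|=\sum_{e}m(e)+\#\{\text{closed edges}\}$. Thus the set $T$ of closed edges has exactly $|\Verts(\Gamma)|-1$ elements, and it is acyclic (a cycle among closed edges would lift to a cycle in $\widetilde T$), hence $T$ is a spanning tree of $\Gamma$. Conversely, given a spanning tree $T$ of $\Gamma$ together with, for each $e\notin T$, a choice of one edge of $C_e$ to delete, the resulting subgraph of ${\tGamma}_m$ has $\sum_e m(e)+|\Edges(T)|=|\Verts({\tGamma}_m)|-1$ edges and is connected — every exceptional vertex remains joined to an endpoint of its edge, and the vertices of $\Gamma$ are joined through $T$ — hence is a spanning tree of ${\tGamma}_m$. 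These two assignments are mutually inverse, so
\[
c({\tGamma}_m)=\sum_{T \text{ spanning tree of }\Gamma}\ \prod_{e\in \Edges(\Gamma)\setminus\Edges(T)}\bigl(m(e)+1\bigr).
\]

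Finally I would expand and reindex. Writing $\prod_{e\notin T}(m(e)+1)=\sum_{S\subseteq \Edges(\Gamma)\setminus\Edges(T)}\prod_{e\in S}m(e)$ and summing over $T$, one obtains a sum over pairs $(T,S)$ with $T$ a spanning tree of $\Gamma$ and $\Edges(T)\cap S=\emptyset$. Setting $G$ to be the spanning subgraph with $\Edges(G)=\Edges(\Gamma)\setminus S$, the disjointness condition becomes $T\subseteq G$, i.e.\ $T$ is a spanning tree of the graph $G$; for each spanning subgraph $G$ there are exactly $c(G)$ such $T$ (this automatically kills the disconnected $G$), while $\prod_{e\in S}m(e)=\prod_{e\notin G}m(e)$. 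Summing first over $T$ for fixed $G$ and then over all $G$ yields precisely the right-hand side of \eqref{eq: number}. I expect the only genuinely delicate point to be the ``at most one omitted edge per chain'' claim — this is exactly where the degree-$2$ property of exceptional vertices is used — with everything else being bookkeeping.
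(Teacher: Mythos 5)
Your argument is correct and complete. Note, however, that the paper does not prove this lemma at all: it is quoted verbatim from the reference \cite[Theorem~3.4]{bms}, so there is no in-paper proof to compare against. What you supply is a self-contained, elementary bijective proof. The two key steps both check out: (i) a spanning tree of $\tGamma_m$ omits at most one edge per exceptional chain, because the exceptional vertices strictly between two omitted edges would be separated by a cut consisting only of those two edges (this also correctly forces loops to be ``open''); together with the edge count $\#\{\text{closed edges}\}=|\Verts(\Gamma)|-1$ and acyclicity of the closed-edge set, this gives the bijection yielding $c(\tGamma_m)=\sum_{T}\prod_{e\notin T}(m(e)+1)$. (ii) Expanding each product over subsets $S\subseteq\Edges(\Gamma)\setminus\Edges(T)$ and reindexing by $G=\Gamma\setminus S$ turns the sum over pairs $(T,S)$ into $\sum_G c(G)\prod_{e\notin G}m(e)$, with disconnected $G$ contributing $c(G)=0$, which is exactly the right-hand side of \eqref{eq: number}. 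The only thing your write-up buys beyond the paper is independence from the external reference; conversely, citing \cite{bms} is what the authors chose to do, presumably because the statement is standard. No gaps.
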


In order to  state and then prove Proposition~\ref{coroll: chipfiring},  we first need to define the lift of an assignment to $\widetilde{\Gamma}$. 

Let $\sigma$ be a degree~$d$ assignment for $\Gamma$ (see Definition~\ref{def: assignment}).
For each $m \colon \Edges(\Gamma) \to \mathbf{N}$, let $\widetilde{\sigma}_m$, we then define the lift $\widetilde{\sigma}_m \subset S^d_{\widetilde{\Gamma}}(\widetilde{\Gamma})$
of $\sigma$ as follows. First of all, for every element $(\Gamma,\underline{\mathbf d}) \in \sigma$ we can naturally identify $\underline{\mathbf d}$ with the  element of $\Z^{\Verts(\widetilde{\Gamma})}$ obtained by extending $\underline{\mathbf{d}}$ to zero on all exceptional vertices. Then a lift of $(\gammazero,\underline{\mathbf d})$ is any element $\widetilde{\underline{\mathbf d}}\in S^d_{\widetilde{\Gamma}}(\widetilde{\Gamma})$ of the form
\begin{equation}\label{lift}\widetilde{\underline{\mathbf d}} = \underline{\mathbf d}+\sum_{e\in \Edges(\Gamma)\setminus\Edges(\gammazero)}\underline{\mathbf e}_{v(e)},
\end{equation}
for any choice of a function $v\colon \Edges(\Gamma)\setminus\Edges(\gammazero) \to \Verts(\widetilde{\Gamma})$ such that each vertex $v(e)$ is one of the $m(e)$ interior vertices of the exceptional chain 
in $\widetilde{\Gamma}$ that corresponds to the edge $e$.

Then we have:

\begin{proposition} \label{coroll: chipfiring}
Let $\sigma$ be a degree~$d$ assignment on $\Gamma$ that satisfies Part~(1) of Definition~\ref{finejacstab}. Then:

\begin{enumerate} \item \label{uno} Let $m \colon \Edges(\Gamma) \to \mathbf{N}$ be the constant function equal to $1$, and let $\widetilde{\Gamma}=\widetilde{\Gamma}_m$ and $\tilde{\sigma}=\tilde{\sigma}_m$ be as defined above. Assume that  $|\sigma(G)| \geq c(G)$ for all connected spanning subgraphs $G$ of $\Gamma$. If $\widetilde{\sigma}\subset S^d({\tGamma})$  is a minimal complete set of representatives for the action of $\tw({\tGamma})$, then the assignment $\sigma$ is a degree~$d$ stability assignment for $\Gamma$.

\item \label{due} If the assignment $\sigma$ also satisfies Part~(2) of Definition~\ref{finejacstab}, then for all $m \colon \Edges(\Gamma) \to \mathbf{N}$, the lift $\widetilde{\sigma}_m\subset S^d({\tGamma}_m)$ (defined above) is a minimal complete set of representatives for the action of $\tw({\tGamma}_m)$.
\end{enumerate}
\end{proposition}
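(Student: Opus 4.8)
The plan is to reduce both statements to a careful bookkeeping of orbit counts under the twister groups of $\Gamma$ and $\widetilde\Gamma_m$, using Lemma~\ref{complexityblowup} as the numerical backbone and Proposition~\ref{barmak} to convert the inequality $|\sigma(G)|\ge c(G)$ into an equality. The central observation, which I would isolate as a sublemma, is that the lift operation is compatible with the stratification: if $\widetilde{\underline{\mathbf d}}$ is a lift of $(\gammazero,\underline{\mathbf d})$ as in \eqref{lift}, then contracting all exceptional chains lying over edges of $\gammazero$ and contracting nothing else recovers $(\gammazero,\underline{\mathbf d})$, so the map ``forget which interior vertex was chosen, then contract the exceptional chains over $\gammazero$'' sends $\widetilde\sigma_m$ onto $\sigma$ fiber-wise. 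Concretely, the fiber of $\widetilde\sigma_m$ over a fixed pair $(\gammazero,\underline{\mathbf d})\in\sigma$ has exactly $\prod_{e\in\Edges(\Gamma)\setminus\Edges(\gammazero)} m(e)$ elements (one choice of interior vertex per non-$\gammazero$ edge), so
\[
|\widetilde\sigma_m| \;=\; \sum_{\substack{G\subseteq\Gamma\text{ connected}\\\text{spanning}}}\Big(\prod_{e\in\Edges(\Gamma)\setminus\Edges(G)} m(e)\Big)\cdot |\sigma(G)|.
\]
Comparing with \eqref{eq: number}, this equals $c(\widetilde\Gamma_m)$ exactly when $|\sigma(G)|=c(G)$ for every connected spanning subgraph $G$.

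For part \eqref{due}: assuming $\sigma$ satisfies both parts of Definition~\ref{finejacstab}, Remark~\ref{complexity} gives $|\sigma(G)|=c(G)$ for all $G$, hence $|\widetilde\sigma_m|=c(\widetilde\Gamma_m)=|J^d(\widetilde\Gamma_m)|$; so it suffices to prove that $\widetilde\sigma_m$ is a \emph{complete} set of representatives, i.e.\ that the composition $\widetilde\sigma_m\hookrightarrow S^d_{\widetilde\Gamma_m}(\widetilde\Gamma_m)\twoheadrightarrow J^d(\widetilde\Gamma_m)$ is surjective (equality of finite cardinalities then forces bijectivity, hence also minimality). Surjectivity I would establish by relating $\tw(\widetilde\Gamma_m)$ to $\tw(\gammazero)$ for each $\gammazero$: the twisters at the (non-exceptional) vertices of $\widetilde\Gamma_m$, together with the twisters at exceptional vertices, let one ``slide chips along an exceptional chain'' and ``merge a chain into its endpoints''. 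Precisely, given an arbitrary $\widetilde{\underline{\mathbf e}}\in S^d_{\widetilde\Gamma_m}(\widetilde\Gamma_m)$, I would first use exceptional-vertex twisters to move the chip configuration along each chain so that at most one interior chip survives per chain (a semibreak-type normal form — this is where the $\gammazero$-perturbation language and the remark identifying perturbations with semibreak divisors, cf.\ \cite{gst}, enter), thereby landing in the image of some lift of an element of $S^d_\Gamma(\gammazero)$ where $\gammazero$ is the set of edges whose chain ended up chip-free; then use the fact that $\sigma(\gammazero)$ is a complete set of representatives for $\tw(\gammazero)$ acting on $S^d_\Gamma(\gammazero)$ to adjust the $\Gamma$-part into $\sigma$. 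One must check that the $\Gamma$-adjustment lifts to a $\tw(\widetilde\Gamma_m)$-move fixing the chain-free/chip normalization, which follows because $\tw_{\Gamma,v}$ is the image of $\tw_{\widetilde\Gamma_m,v}$ after merging chains (each merge is itself realized by exceptional twisters).

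For part \eqref{uno}: here $m\equiv 1$, so $\widetilde\Gamma=\widetilde\Gamma_1$ has exactly one exceptional vertex per edge, and the fiber of $\widetilde\sigma$ over each $(\gammazero,\underline{\mathbf d})\in\sigma$ is a single element (the product of $m(e)=1$'s), so $|\widetilde\sigma|=\sum_{G}|\sigma(G)|\ge\sum_G c(G)=c(\widetilde\Gamma)$ by \eqref{eq: number}. Now the hypothesis that $\widetilde\sigma$ is a \emph{minimal} complete set of representatives forces $|\widetilde\sigma|=c(\widetilde\Gamma)$, hence $|\sigma(G)|=c(G)$ for all $G$ by the equality case of the displayed sum; in particular $|\sigma(\Gamma)|=c(\Gamma)$, and Proposition~\ref{barmak} then upgrades this to $|\sigma(\gammazero)|=c(\gammazero)$ for all $\gammazero$ — but I already have that, so the real content is to promote ``$\sigma(\gammazero)$ has the right cardinality'' to ``$\sigma(\gammazero)$ is a complete set of representatives for $\tw(\gammazero)$''. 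For this I would run the argument of part \eqref{due} in reverse: the composite $\sigma(\gammazero)\to J^d_{\text{(part of }\widetilde\Gamma)}$ factors through the $\tw(\widetilde\Gamma)$-orbit structure, and since $\widetilde\sigma$ is a complete set of representatives, distinct $\tw(\gammazero)$-orbits in $S^d_\Gamma(\gammazero)$ must give lifts in distinct $\tw(\widetilde\Gamma)$-orbits — so $\sigma(\gammazero)$ meets every orbit (completeness) and, by cardinality, meets each exactly once (this is Part~(2) of Definition~\ref{finejacstab}). Part~(1) of Definition~\ref{finejacstab} holds by hypothesis, so $\sigma$ is a stability assignment.

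The main obstacle I anticipate is the normal-form step in part \eqref{due} (and its mirror in \eqref{uno}): showing that every class in $J^d(\widetilde\Gamma_m)$ has a representative which is ``chip-free along a connected spanning set of chains and carries at most one interior chip on each of the others'', and that the residual $\Gamma$-level ambiguity is \emph{exactly} a $\tw(\gammazero)$-ambiguity — no more, no less. This requires carefully disentangling which combinations of exceptional twisters preserve the normal form versus which move between sheets, essentially the content of \cite[Theorem~3.4]{bms} promoted from a count to an explicit orbit bijection; I would phrase it as: the inclusions induce a bijection $J^d(\widetilde\Gamma_m)\cong\bigsqcup_{\gammazero}\big(\text{perturbations over }\Gamma\setminus\gammazero\big)\times J^d_\Gamma(\gammazero)$, compatible with the lift map, after which both parts of the Proposition are immediate.
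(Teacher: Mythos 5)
Your counting framework is sound and matches the paper's numerical backbone: the identity $|\widetilde\sigma_m|=\sum_{G}\bigl(\prod_{e\notin G}m(e)\bigr)\,|\sigma(G)|$ compared against Lemma~\ref{complexityblowup}, together with Proposition~\ref{barmak}, correctly yields $|\sigma(G)|=c(G)$ for all $G$ in part~\eqref{uno} and $|\widetilde\sigma_m|=c(\tGamma_m)$ in part~\eqref{due}. The gap is in the step that converts cardinalities into orbit statements. Both your ``run the argument in reverse'' step in part~\eqref{uno} and your normal-form step in part~\eqref{due} rest on the claim that two elements of $S^d_\Gamma(\gammazero)$ in the same $\tw(\gammazero)$-orbit have lifts lying in the same $\tw(\tGamma_m)$-orbit (equivalently, your final asserted decomposition of $J^d(\tGamma_m)$ as $\bigsqcup_{\gammazero}(\text{perturbations})\times J^d_\Gamma(\gammazero)$). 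This is \emph{false} whenever $\gammazero\subsetneq\Gamma$. Take $\Gamma$ with two vertices $a,b$ joined by three edges, $m\equiv 1$, and $\gammazero$ the spanning subgraph with two edges; then $\tw_{\gammazero,a}=(2,-2)$, but the difference of the corresponding lifts, namely $(2,-2,0,0,0)\in\Z^{\Verts(\tGamma)}$, lies in $\tw(\tGamma)$ only if $3$ divides $2$: writing it as $\alpha\tw_{\tGamma,a}+\beta\tw_{\tGamma,b}+\sum\gamma_i\tw_{\tGamma,x_i}$ forces $3(\beta-\alpha)=-4$. So the lift map does not descend to orbit sets, the residual ambiguity over a fixed $\gammazero$ is \emph{not} a $\tw(\gammazero)$-ambiguity, and your adjustment ``use completeness of $\sigma(\gammazero)$ and lift the $\tw(\gammazero)$-move'' cannot be performed. (Your remark that $\tw_{\Gamma,v}$ is the image of $\tw_{\tGamma_m,v}$ after merging chains is correct only for the full graph $\Gamma$, which is why the analogous computation does work when $\gammazero=\Gamma$.)

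The paper circumvents this in two ways. For part~\eqref{due} it proves \emph{minimality} rather than completeness (either suffices given the count): it shows any two distinct elements of $\widetilde\sigma_m$ lie in different $\tw(\tGamma_m)$-orbits by verifying a strict cut inequality $\bigl|\sum_{v\in\widetilde V}(\widetilde{\underline{\mathbf e}}-\widetilde{\underline{\mathbf e}}')(v)\bigr|<|\Edges(\widetilde V,\widetilde V^c)|$ for all bipartitions with connected pieces (Lemma~\ref{blah}), the inequality itself coming from the V-stability bounds of \cite[Definition~1.4, Theorem~1.20]{viviani}. For part~\eqref{uno} it performs the explicit orbit-lifting only for $\gammazero=\Gamma$ (via $g(v)=2f(v)$ on original vertices and $g=f(w)+f(w')$ on exceptional ones, which gives $|\sigma(\Gamma)|\le c(\Gamma)$), and then invokes \cite[Theorem~1.20, (c)$\implies$(b)]{viviani} to upgrade the equalities $|\sigma(G)|=c(G)$ to the full representative-set condition for every $G$. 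To repair your argument you would need either that external input or a genuinely different mechanism for handling the strata with $\gammazero\neq\Gamma$; the chain-normalization picture alone does not supply it.
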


\begin{proof}

  We first prove  \eqref{uno}, \emph{i.e.} we assume $m$ is constant and equal to $1$ and prove that $\sigma$ also satisfies Part (2) of Definition~\ref{finejacstab}. By \cite[Theorem~1.20, (c) $\implies$ (b)]{viviani} it suffices to prove that $|\sigma(G)|=c(G)$ for all $G \subseteq \Gamma$. By Proposition~\ref{barmak} it suffices to prove the inequality $|\sigma(\Gamma)|\leq c(\Gamma)$. Arguing by contradiction, it is enough to show that if any two different elements ${\bf \underline{e}}$ and ${\bf \underline{e}}'$ of $\sigma(\Gamma)$ are in the same orbit for the action of $\tw(\Gamma)$, then the corresponding lifts ${\bf \underline{\tilde{e}}}$ and ${\bf \underline{\tilde{e}}}'$ to $\widetilde{\sigma}$ via Equation~\ref{lift}  are different and in the same $\tw(\widetilde{\Gamma})$-orbit.
   
   Therefore, we assume that there exists $f \colon \Verts(G) \to \Z$ such that
   \[
{\bf \underline{e}} - {\bf \underline{e}}' = \sum_{v \in \Verts(G)} f(v) \tw_{G,v}.
   \]
     We now define\footnote{This argument is a particular case of a general procedure discussed in \cite[Section~2.4]{aminiesteves}.}  a lift $g \colon \Verts({\tGamma}) \to \Z$ of $f$ as follows:
     \[
     g(v)= \begin{cases} 2 f(v') &  \textrm{ when } v \textrm{ is not exceptional and corresponds to }v' \in \Verts(\Gamma),  \\ f(w)+f(w') &\textrm{ when } v \textrm{ is exceptional and lies between  } w,w' \in \Verts(\Gamma).\end{cases}
     \]
     
     With this definition we have
\[
{\bf \tilde{\underline{e}}}- {\bf \tilde{\underline{e}}}' = \sum_{v \in \Verts({\tGamma})} g(v) \tw_{{\tGamma},v}.
\]
This proves that the lifts ${\bf \tilde{\underline{e}}}$ and $ {\bf \tilde{\underline{e}}}'$, which are different elements of $\widetilde{\sigma}$, are in the same $\tw({\tGamma})$-orbit. This completes our proof of \eqref{uno}.

To prove  \eqref{due}, we begin by   observing that if $\sigma$ is a stability assignment (as in Definition~\ref{finejacstab}), then the number of elements of $\widetilde{\sigma}_m$ equals the right hand side of \eqref{eq: number}, which equals the complexity of ${\tGamma}_m$  by Lemma~\eqref{complexityblowup}. Thus it suffices to prove that any two different elements ${\bf \underline{\tilde{e}}}$ and ${\bf \underline{\tilde{e}}}'$ of $\widetilde{\sigma}_m$   belong to different $\tw(\widetilde{\Gamma}_m)$-orbits. By Lemma~\ref{blah} below, it suffices to prove the inequality
\begin{equation} \label{ineqV}
\left| \sum_{v \in \widetilde{V}} {\bf \underline{\tilde{e}}}(v)-{\bf \underline{\tilde{e}}}'(v) \right|< \left|\Edges(\widetilde{V},\widetilde{V}^c)\right|
\end{equation}
for all $\widetilde{V} \subset \Verts(\widetilde{\Gamma}_m)$ such that the induced subgraphs $\Gamma(\widetilde{V})$ and $\Gamma(\widetilde{V}^c)$ are connected.

Let $j \colon \Verts(\Gamma) \to \Verts(\widetilde{\Gamma})$ be the inclusion of the nonexceptional vertices. If $\widetilde{V}$ is not in the image of $j$, then $\widetilde{V}$ is a subset of an exceptional chain, so the right-hand side of \eqref{ineqV} equals $2$ and by the definition of a lift we have that the left hand side of \eqref{ineqV} equals $0$ or $1$. From now on we will assume that $\widetilde{V}$ contains some nonexceptional vertices.

We now prove \eqref{ineqV} under the additional assumption that ${\bf \underline{\tilde{e}}}$ and ${\bf \underline{\tilde{e}}}'$ are zero on all exceptional vertices, namely that they are lifts of elements  ${\bf \underline{{e}}}$ and ${\bf \underline{{e}}}'$ that belong to $\sigma(\Gamma)$.  Because $\sigma$ is a degree~$d$ stability assignment for $\Gamma$, by \cite[Theorem~1.20, (b) $\implies$ (a)]{viviani}, it is also a ``V-stability condition'' in the sense of \cite{viviani}. By \cite[Definition~1.4]{viviani}), this means that, for all $V \subseteq \Verts(\Gamma)$ such that $\Gamma(V)$ and $\Gamma(V^c)$ are connected, there exist  integers $n_{{\bf \vec{e}}}(V)$ and $n_{{\bf \vec{e}'}}(V)$ such that the inequalities 
\[
n_{{\bf \vec{e}}}(V) \leq \sum_{v \in V} {\bf \underline{{e}}}(v) \leq n_{{\bf \vec{e}}}(V)+ \left|\Edges(V,V^c)\right| -1 
\]
and
\[
n_{{\bf \vec{e}'}}(V) \leq \sum_{v \in V} {\bf \underline{{e}}}'(v) \leq n_{{\bf \vec{e}'}}(V)+ \left|\Edges(V,V^c)\right| -1 
\]
hold. The combination of these two inequalities applied to $V:=j^{-1}(\widetilde{V})$ implies Inequality~\eqref{ineqV}.

    If ${\bf \underline{\tilde{e}}}$ and ${\bf \underline{\tilde{e}}}'$ are not zero on all of the exceptional vertices of $\widetilde{\Gamma}$, we first replace them by elements of $\widetilde{\sigma}_m$ that satisfy this additional hypothesis. This can be achieved by replacing each $1$ on each exceptional vertex in $\tilde{V}$ (resp. in $\tilde{V}^c$) with a $0$, and for each such replacement by adding $+1$ on (one of) the closest nonexceptional vertex in $\Gamma(\tilde{V})$ (resp. in $\Gamma(\tilde{V}^c)$). This replacement does not change the left hand side of \eqref{ineqV}. That these replacements are also elements of $\widetilde{\sigma}_m$ follows from the fact that $\sigma$ satisfies Part~(1) of Definition~\ref{finejacstab}. This concludes our proof.
   \end{proof}
We conclude by proving the following, which was used in the proof of the above.
\begin{lemma} Let ${\bf \underline{\tilde{e}}}$ and ${\bf \underline{\tilde{e}}}'$ be elements of $\tilde{\sigma}_m$. If  the inequality
\begin{equation} \label{ineq}
\left| \sum_{v \in W} {\bf \underline{\tilde{e}}}(v)-{\bf \underline{\tilde{e}}}'(v) \right|< \left|\Edges(W,W^c)\right|
\end{equation}
holds for all  $W \subset \Verts(\widetilde{\Gamma}_m)$ such that the induced subgraphs $\Gamma(W)$ and $\Gamma(W^c)$ are connected, then ${\bf \underline{\tilde{e}}}$ and ${\bf \underline{\tilde{e}}}'$ belong to different $\tw(\widetilde{\Gamma}_m)$-orbits. \label{blah} \end{lemma}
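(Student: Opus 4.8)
The plan is to establish the contrapositive and to reduce the statement to a self-contained combinatorial fact about the Laplacian of $\widetilde{\Gamma}_m$. So suppose ${\bf \underline{\tilde{e}}}\neq{\bf \underline{\tilde{e}}}'$ lie in the same $\tw(\widetilde{\Gamma}_m)$-orbit (this is the only case used in the paper), and set $D:={\bf \underline{\tilde{e}}}-{\bf \underline{\tilde{e}}}'$. Then $D=\sum_{v\in\Verts(\widetilde{\Gamma}_m)}f(v)\,\tw_{\widetilde{\Gamma}_m,v}$ for some $f\colon\Verts(\widetilde{\Gamma}_m)\to\Z$; since $\widetilde{\Gamma}_m$ is connected (it is a subdivision of $\Gamma$, which supports a degree $d$ assignment and is therefore connected) and $D\neq 0$, the function $f$ is nonconstant, and after subtracting $\min f$ we may assume $\min f=0$ and $M:=\max f\ge 1$. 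The first ingredient is the elementary ``cut formula'': grouping the twisters edge by edge gives, for every $W\subseteq\Verts(\widetilde{\Gamma}_m)$,
\[
\sum_{v\in W}D(v)=\sum_{e\in\Edges(W,W^c)}\bigl(f(b_e)-f(a_e)\bigr),
\]
where $a_e\in W$ and $b_e\in W^c$ are the endpoints of $e$ (loops never cross a cut and are harmless throughout). Applied to a superlevel set $W_t:=\{v:f(v)\ge t\}$ with $1\le t\le M$ — a proper nonempty subset, since it contains a maximum but no minimum of $f$ — each crossing edge decreases $f$ by at least $1$, so $\sum_{v\in W_t}D(v)\le-|\Edges(W_t,W_t^c)|$. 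In particular the family $\mathcal{G}$ of proper nonempty subsets $W$ with $\sum_{v\in W}D(v)+|\Edges(W,W^c)|\le 0$ is nonempty.

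The crux is that a superlevel set need not induce a connected subgraph, nor have connected complement, so it need not contradict \eqref{ineq} directly; the real work is a surgery producing a member of $\mathcal{G}$ whose two sides are both connected. I would choose $W_0\in\mathcal{G}$ minimal for inclusion. If $\Gamma(W_0)$ were disconnected, say $W_0=A\sqcup B$ with $A,B$ nonempty and no edge between them, then (since there are no $A$--$B$ edges) both the cut size $|\Edges(\cdot,(\cdot)^c)|$ and the sum $\sum_{v\in\cdot}D(v)$ split additively over $W_0=A\sqcup B$, so one of $A,B$ again lies in $\mathcal{G}$, contradicting minimality of $W_0$; hence $\Gamma(W_0)$ is connected. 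If moreover $\Gamma(W_0^c)$ is connected, then $W_0$ itself contradicts \eqref{ineq} and we are finished.

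Otherwise let $\Gamma(W_0^c)=C_1\sqcup\cdots\sqcup C_k$, $k\ge 2$, be the connected components of $\Gamma(W_0^c)$: distinct $C_i$'s share no edge, and each $C_i$ is joined to $W_0$ by at least one edge because $\widetilde{\Gamma}_m$ is connected and $C_i$ is a proper nonempty subset. Since $\Edges(C_i,C_i^c)=\Edges(C_i,W_0)$, additivity over $W_0^c=\bigsqcup_i C_i$ yields
\[
\sum_{i=1}^{k}\Bigl(\sum_{v\in C_i}D(v)-|\Edges(C_i,C_i^c)|\Bigr)=-\Bigl(\sum_{v\in W_0}D(v)+|\Edges(W_0,W_0^c)|\Bigr)\ge 0,
\]
so $\sum_{v\in C_i}D(v)\ge|\Edges(C_i,C_i^c)|$ for some $i$. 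Put $W:=\Verts(\widetilde{\Gamma}_m)\setminus C_i$. Then $\Gamma(W^c)=\Gamma(C_i)$ is connected; $\Gamma(W)$ is connected, being $\Gamma(W_0)$ with each remaining $C_j$ ($j\neq i$) attached to it by an edge; $W$ is proper and nonempty; and, using $\sum_{v\in\Verts(\widetilde{\Gamma}_m)}D(v)=0$ and $\Edges(W,W^c)=\Edges(C_i,C_i^c)$, one gets $\sum_{v\in W}D(v)=-\sum_{v\in C_i}D(v)\le-|\Edges(W,W^c)|$, hence $\bigl|\sum_{v\in W}D(v)\bigr|\ge|\Edges(W,W^c)|$. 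This contradicts \eqref{ineq} and proves the contrapositive, hence the lemma.

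I expect the only genuine difficulty to be the combinatorial surgery of the last two paragraphs — passing from a crude cut-violating set (a superlevel set of $f$) to one both of whose sides induce connected subgraphs. The remaining inputs (the cut formula, the additivity of the ``defect'' $\sum_{v\in W}D(v)+|\Edges(W,W^c)|$ under edge-free decompositions, the connectedness of $\widetilde{\Gamma}_m$, and the irrelevance of loops) are all routine, and I would state them without detailed verification.
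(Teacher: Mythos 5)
Your proof is correct. The underlying idea is the same as the paper's -- a level set of the chip-firing function $f$ produces a cut on which $\bigl|\sum_{v\in W}D(v)\bigr|\geq|\Edges(W,W^c)|$ -- but the execution differs in how the connectivity requirement on $W$ and $W^c$ is met. The paper works with the \emph{minimum} level set of $f$: it takes $W$ to be a suitable subset of $\{w:f(w)=\min f\}$ with both $\Gamma(W)$ and $\Gamma(W^c)$ connected, asserting without further argument that such a $W$ exists (and implicitly that $W$ can be chosen so that every cut edge leaves the minimum level set, which is what the displayed inequality actually requires). You instead start from superlevel sets, introduce the defect $\sum_{v\in W}D(v)+|\Edges(W,W^c)|$, observe its additivity over edge-free decompositions, take an inclusion-minimal set of nonpositive defect, and then repair the complement by an explicit surgery on its connected components. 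What your route buys is a complete justification of the existence step that the paper leaves as an assertion; what it costs is length. Two small points worth tightening if you write this up: the lemma as literally stated does not exclude ${\bf \underline{\tilde{e}}}={\bf \underline{\tilde{e}}}'$ (in which case it is vacuously false, since equal elements trivially share an orbit while \eqref{ineq} holds), so you are right to restrict to the distinct case, which is the only one used; and the connectedness of $\widetilde{\Gamma}_m$ is best justified directly from $\Gamma$ being the dual graph of a (connected) nodal curve rather than from the existence of an assignment.
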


\begin{proof}
    Assume for a contradiction that there exists $f \colon \Verts(\widetilde{\Gamma}_m) \to \Z$ such that
    \[
    {\bf \underline{\tilde{e}}'}= 
    {\bf \underline{\tilde{e}}}+ \sum_{v \in \Verts(\widetilde{\Gamma}_m)} f(v) \tw_{\widetilde{\Gamma}_m, v}.
    \]
    Defining $W$ to be a subset of $\{w: \ f(w)=\min_{v\in\Verts(\Gamma)}(f)\}$ 
    such that both induced subgraphs $\Gamma(W)$ and $\Gamma(W^c)$ 
    are connected (one such $W$ subset always exists),
    we then deduce
    \[
    \sum_{v \in W}{\bf \underline{\tilde{e}}'}(v)\geq 
    \sum_{v \in W} {\bf \underline{\tilde{e}}}(v)+ |E(W,W^c)|.
    \]
    This contradicts \eqref{ineq}, thus concluding our proof.
\end{proof}
\section{From stability assignments to smoothable fine compactified Jacobians}
In this section we show how to construct a fine compactified  Jacobian from a given  stability assignment. We define a sheaf to be \emph{stable} if its multidegree is as prescribed by the  stability assignment, both in the case of a single nodal curve and for families. Then we show that the moduli space of stable sheaves thus defined is a fine compactified Jacobian as introduced in Definitions~\ref{def:finecompjac}~and~\ref{def:familyfinecompjac}. The case of a curve in isolation  is Corollary~\ref{stab->fine}, and the more general case of families with smooth generic element is Theorem~\ref{cor: fromstabtojac}. The most difficult part is the proof of properness (Lemma~\ref{properoverdelta}).

Throughout this section, let $X$ be a nodal curve over some field extension $K$ of $k$, and let $\Gamma$ be its dual graph. 
\begin{definition} \label{defstable}
 Let $\sigma$ be a degree $d$  stability assignment for $\Gamma$ (see Definition~\ref{finejacstab}). We say that $[F] \in \Simp^d(X)$  is \textbf{stable with respect to  $\sigma$} if  $(\gammazero(F),\underline{\deg}(F))$ is in $\sigma$, where $\gammazero(F)=\Gamma \setminus \NF(F)$ is the (necessarily connected and spanning) subgraph of $\Gamma$ that is dual to the normalization of $X$  at the points (necessarily nodes) where $F$ fails to be locally free. 
	
	We define  $\fcj_\sigma\subseteq \Simp^d(X)$ as the subscheme of sheaves that are stable with respect to $\sigma$.
\end{definition}

Let now $\mathcal{X}/S$ be a family of nodal curves over a $k$-scheme $S$. For every geometric point $s$ of $S$, denote by $\Gamma_s$ the dual graph of the fiber $\mathcal{X}_s$.

\begin{definition} \label{defstablefamily}
 Let $\mathfrak{S}=\set{\sigma_s}_{s \in S}$ be a family of  degree $d$  stability assignments for $\mathcal{X}/S$ as in Definition~\ref{familyfinejacstab}. 
 
 If $F$ is a geometric point of $\Simp^d(\mathcal{X}/S)$ lying over $s\in S$, we say that $F$  is \textbf{stable with respect to  $\mathfrak{S}$} if $F$ is stable with respect to $\sigma_s$ (see Definition~\ref{defstable}).
	
	We define  $\overline{\mathcal{J}}^d_{\mathfrak{S}}\subseteq \Simp^d(\mathcal{X}/S)$ as the algebraic subspace whose geometric points are sheaves that are stable with respect to $\mathfrak{S}$.
\end{definition}

The main result of this section is the following:

\begin{theorem} \label{cor: fromstabtojac}
Assume that $S$ is irreducible with generic point $\theta$, and that the generic element of the family $\mathcal{X}_\theta/\theta$ is smooth. Let $\mathfrak{S}$ be a family of degree~$d$ stability assignments for $\mathcal{X}/S$. Then $\overline{\mathcal{J}}^d_{\mathfrak{S}}\subseteq \Simp^d(\mathcal{X}/S)$ is a degree~$d$ fine compactified Jacobian for the family $\mathcal{X}/S$.
\end{theorem}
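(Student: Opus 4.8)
The plan is to verify the two defining properties of a family of fine compactified Jacobians (Definition~\ref{def:familyfinecompjac}): that $\overline{\mathcal{J}}^d_{\mathfrak{S}}$ is an \emph{open} subspace of $\Simp^d(\mathcal{X}/S)$, and that it is \emph{proper} over $S$. Openness is the more combinatorial half; properness the harder half. I will treat them in this order.

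\textbf{Openness.} By the stratification~\eqref{eq: strata} of $\Simp^d(X)$ and its relative analogue, a point $F$ of $\Simp^d(\mathcal{X}/S)$ lying over $s\in S$ belongs to $\overline{\mathcal{J}}^d_{\mathfrak{S}}$ precisely when the pair $(\gammazero(F),\underline{\deg}(F))$ lies in $\sigma_s$. To prove openness it suffices to show that whenever $F'\rightsquigarrow F$ is an \'etale specialization of geometric points of $\Simp^d(\mathcal{X}/S)$ with $F\in\overline{\mathcal{J}}^d_{\mathfrak{S}}$, also $F'\in\overline{\mathcal{J}}^d_{\mathfrak{S}}$; equivalently, the complement is closed under specialization. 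Such a specialization is realized by a section $\Delta\to\Simp^d(\mathcal{X}/\Delta)$ over a trait $\Delta\to S$ mapping $\eta\mapsto$ (image of $F'$) and $0\mapsto$ (image of $F$). Lemma~\ref{impliedbyopen} then tells us that, writing $g\colon\Gamma(X_0)\to\Gamma(X_\eta)$ for the induced graph morphism, there is a $\gammazero(F)\subseteq g^{-1}(\gammazero(F'))$-perturbation $T$ with $\underline{\deg}(F')=g(\underline{\deg}(F)+T)$. Decomposing $T$ as a sum of $\underline{\mathbf e}_{t(e)}$ over the edges $e\in\Edges(\Gamma(X_0))\setminus\Edges(\gammazero(F))$ and adding these chips one edge at a time, Part~\eqref{firstfjs} of Definition~\ref{finejacstab} (applied inside $\sigma_0$) shows that each intermediate pair, hence $(\gammazero(F)\cup\{\text{all such }e\},\underline{\deg}(F)+T)$, lies in $\sigma_0$; then the $f$-compatibility built into Definition~\ref{familyfinejacstab} (applied to the morphism $g$) pushes this forward to show $(\gammazero(F'),\underline{\deg}(F'))\in\sigma_\eta$, i.e. $F'\in\overline{\mathcal{J}}^d_{\mathfrak{S}}$. (Here one uses that the contracted edges of $\Gamma(X_0)\setminus\gammazero(F)$ account for the shift in~\eqref{comp-contractions}, which matches $\delta(F')-\delta(F)$ via~\eqref{deg:subcurve}.) Since $\overline{\mathcal{J}}^d_{\mathfrak{S}}$ is a constructible, specialization-stable subset of a scheme locally of finite type over $S$ --- or more precisely its complement is --- it is open.

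\textbf{Properness.} This is the main obstacle, and I expect it to reduce to the valuative criterion over a trait. Since $\Simp^d(\mathcal{X}/S)$ satisfies the existence part of the valuative criterion of properness (by \cite{esteves}), it suffices to prove: $\overline{\mathcal{J}}^d_{\mathfrak{S}}\subseteq\Simp^d(\mathcal{X}/S)$ is of finite type over $S$, and it is \emph{separated} and \emph{universally closed} over $S$. Of finite type follows from Remark~\ref{complexity}: over each geometric point $s$ the fiber meets only finitely many strata $\mathcal J_{(\gammazero,\underline{\mathbf d})}$ (one pair for each connected spanning $\gammazero$, each contributing $c(\gammazero)<\infty$ multidegrees), and each such stratum is of finite type. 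For separatedness and universal closedness I would argue fiberwise over traits: given a DVR $R$ over $S$ with fraction field $L$, a line bundle on $\mathcal{X}_{\operatorname{Spec}L}$ (the generic fiber of a fine compactified Jacobian is forced to be a $\Pic$, as noted after Definition~\ref{def:finecompjac}), I must show it extends \emph{uniquely} to an $R$-point of $\overline{\mathcal{J}}^d_{\mathfrak{S}}$. Existence of \emph{some} extension in $\Simp^d$ is Esteves; I must show the limiting sheaf $F_0$ can be adjusted (by tensoring with a twister, which does not change the isomorphism class of the generic fiber) so that $(\gammazero(F_0),\underline{\deg}(F_0))\in\sigma_0$, and that this adjustment is unique. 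This is exactly where Part~\eqref{secondfjs} of Definition~\ref{finejacstab} enters: on the subgraph $\gammazero=\gammazero(F_0)$ the twister group $\tw(\gammazero)$ acts on $S^d_{\Gamma}(\gammazero)$ with $\sigma_0(\gammazero)$ a \emph{complete} set of representatives (giving existence of the twist) that is \emph{minimal} (giving uniqueness). The subtlety --- and the reason the excerpt flags Lemma~\ref{properoverdelta} as the hardest point --- is matching the \emph{abstract} twister group $\tw(\gammazero)$ with the \emph{geometric} twists available on the total space $\mathcal{X}/R$: by Remark~\ref{twister}, when $\mathcal{X}$ is regular the image of $\Ker(\Pic(\mathcal{X})\to\Pic(\mathcal{X}_\eta))$ in $\Pic(X_0)$ maps isomorphically onto $\tw(\Gamma)$, but here the central fiber of $\mathcal{X}/R$ need not be all of $X$ and, worse, $F_0$ is only locally free along $\gammazero$, not along $\Gamma$.

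\textbf{Handling the blow-up.} To bridge this gap I would follow the device already set up in Section~5: replace $\mathcal{X}/R$ by a suitable blow-up $\mathcal{Y}/R$ at the nodes, so that the limiting \emph{sheaf} becomes a limiting \emph{line bundle} on the new central fiber, whose dual graph is a subdivision $\widetilde{\Gamma}_m$ of $\Gamma$ for an appropriate $m$ (as in Lemma~\ref{impliedbyopen} and the paragraph preceding Proposition~\ref{coroll: chipfiring}). On $\widetilde{\Gamma}_m$ the geometric twists are the full twister group $\tw(\widetilde{\Gamma}_m)$, and Proposition~\ref{coroll: chipfiring}\eqref{due} guarantees that the lift $\widetilde{\sigma}_m$ is a minimal complete set of representatives for $\tw(\widetilde{\Gamma}_m)$ --- which is precisely the statement needed to run the valuative criterion on $\mathcal{Y}/R$. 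Descending back from $\mathcal{Y}$ to $\mathcal{X}$ (via $\psi_*$ as in the proof of Lemma~\ref{impliedbyopen}) then yields the unique stable limit $F_0$ on $\mathcal{X}$, establishing both existence and uniqueness, hence universal closedness and separatedness. Combining: $\overline{\mathcal{J}}^d_{\mathfrak{S}}$ is open in $\Simp^d(\mathcal{X}/S)$, of finite type, separated and universally closed over $S$, hence proper over $S$, so it is a family of degree~$d$ fine compactified Jacobians. I expect the truly delicate bookkeeping to be in the blow-up step: choosing $m$ uniformly in a way compatible with the generic-to-special specialization, and checking that the lift $\widetilde{\sigma}_m$ really does capture all limits that can occur, which is where the finite-type bound from Lemma~\ref{complexityblowup} and the hypothesis that $\sigma$ satisfies Part~(1) of Definition~\ref{finejacstab} do the work.
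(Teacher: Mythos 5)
Your proposal is correct and follows essentially the same route as the paper: openness via stability under generalization using Lemma~\ref{impliedbyopen} together with Part~(1) of Definition~\ref{finejacstab} and the family compatibility, and properness via the valuative criterion, with finite type from the finitely many strata, existence of a limit from Esteves, and existence/uniqueness of the stable representative supplied by the blow-up device plus Proposition~\ref{coroll: chipfiring}~Part~\eqref{due}. The one point you flag as delicate --- the choice of $m$ --- is settled in the paper simply by taking $m(e)$ to be the integer for which the node $e$ is an $A_{m(e)}$-singularity of the total space, so that the $m(e)$-fold blow-up at each node is a resolution of singularities and the regular-case argument via Remark~\ref{twister} applies verbatim to the resolved family.
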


We immediately deduce:
\begin{corollary} \label{stab->fine}
The moduli space $\fcj_\sigma \subseteq \Simp^d(X)$ of $\sigma$-stable sheaves is a smoothable degree~$d$ fine compactified Jacobian for $X$.
\end{corollary}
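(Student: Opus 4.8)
The statement is Corollary~\ref{stab->fine}, which asks us to show that for a single nodal curve $X$ over $K$ with dual graph $\Gamma$, and a degree~$d$ stability assignment $\sigma$, the moduli space $\fcj_\sigma \subseteq \Simp^d(X)$ is a smoothable degree~$d$ fine compactified Jacobian. The plan is to deduce this directly from Theorem~\ref{cor: fromstabtojac} by choosing an appropriate family. First I would take $\mathcal{X}/\Delta$ to be a regular smoothing of $X$ over the spectrum $\Delta$ of a DVR with residue field $K$; such a smoothing exists since $X$ is a nodal curve. Here $\Delta$ is irreducible with generic point $\eta$, and the generic fiber $\mathcal{X}_\eta/\eta$ is smooth by the definition of a smoothing.

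The next step is to produce a family of stability assignments $\mathfrak{S}$ for $\mathcal{X}/\Delta$ restricting to $\sigma$ on the closed fiber. Since $\Delta$ has only two points, a geometric point over $\eta$ has a smooth fiber with a single-vertex dual graph carrying no edges, for which there is a unique stability assignment (the trivial one, as in Example~\ref{ex: stab-irred} with $t=0$); and we assign $\sigma$ to the closed point. The only \'etale specialization on $\Delta$ is $\eta \rightsquigarrow 0$, and the induced morphism of dual graphs $\Gamma(X) \to \Gamma(\mathcal{X}_\eta)$ contracts all edges of $\Gamma$ down to the single vertex. I must check that $\sigma$ is $f$-compatible with the trivial stability assignment on the one-vertex graph in the sense of Definition~\ref{combo-family}: but the target $S^d$ on a single vertex with no edges consists of a single multidegree (namely $d$), and formula~\eqref{comp-contractions} sends any $(\gammazero,\underline{\mathbf d})$ to the unique element $\sum_v \underline{\mathbf d}(v) + n(\gammazero) = (d - n(\gammazero)) + n(\gammazero) = d$, so compatibility holds automatically. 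Hence $\mathfrak{S}$ is a genuine family of degree~$d$ stability assignments for $\mathcal{X}/\Delta$.

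Now Theorem~\ref{cor: fromstabtojac} applies to $\mathcal{X}/\Delta$ and $\mathfrak{S}$, yielding that $\overline{\mathcal{J}}^d_{\mathfrak{S}} \subseteq \Simp^d(\mathcal{X}/\Delta)$ is a degree~$d$ fine compactified Jacobian for the family $\mathcal{X}/\Delta$, that is, an open subscheme that is proper over $\Delta$. Its fiber over the closed point $0 \in \Delta$ is, by construction and by base change of the stratification of $\Simp^d$, exactly $\fcj_\sigma$. Since openness and properness are stable under base change, $\fcj_\sigma$ is open in $\Simp^d(X)$ and proper over $\Spec(K)$. It is geometrically connected because the generic fiber $\Pic^d(\mathcal{X}_\eta/\eta)$ is geometrically connected and dense in $\overline{\mathcal{J}}^d_{\mathfrak{S}}$, as remarked after Definition~\ref{def:finecompjac}. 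Finally, smoothability is immediate: the very family $\mathcal{X}/\Delta$ together with the open $\Delta$-proper subscheme $\overline{\mathcal{J}}^d_{\mathfrak{S}}$ witnesses that $\fcj_\sigma$ extends over a regular smoothing of $X$.

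\textbf{Main obstacle.} Essentially all the difficulty has been pushed into Theorem~\ref{cor: fromstabtojac} (and, as the authors indicate, into Lemma~\ref{properoverdelta}, the properness statement); granting that, the corollary is a short formal deduction. The only genuine verifications at this level are that the chosen $\mathfrak{S}$ really is a family of stability assignments (the compatibility check above) and that base change identifies the closed fiber of $\overline{\mathcal{J}}^d_{\mathfrak{S}}$ with $\fcj_\sigma$ — both routine given the stratification description in Remark~\ref{sec:strat}.
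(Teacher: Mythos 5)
Your proposal is correct and follows exactly the paper's route: the paper's entire proof of Corollary~\ref{stab->fine} is the single line ``Apply Theorem~\ref{cor: fromstabtojac} to a regular smoothing $\mathcal{X}/\Delta$ of $X$,'' and you have simply filled in the routine verifications (existence of the regular smoothing, the $f$-compatibility of $\sigma$ with the trivial assignment on the generic fiber, and the base-change identification of the closed fiber with $\fcj_\sigma$) that the authors leave implicit. All of these checks are accurate, so your write-up is a correct, slightly more detailed version of the same argument.
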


\begin{proof}
    Apply Theorem~\ref{cor: fromstabtojac} to a regular smoothing $\mathcal{X}/\Delta$ of $X$.
\end{proof}

The most delicate part of the proof of Theorem~\ref{cor: fromstabtojac} is the following result, which builds on the combinatorial preparation of the previous section.

\begin{lemma} With the same assumptions as in Theorem~\ref{cor: fromstabtojac},  the moduli space $\overline{\mathcal{J}}^d_{\mathfrak{S}}$ is proper over $S$. \label{properoverdelta}
\end{lemma}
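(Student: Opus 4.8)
The plan is to reduce the properness statement to the case where $S = \Delta$ is the spectrum of a DVR by standard Noetherian and fppf-descent arguments, so that it suffices to verify the valuative criterion of properness for $\overline{\mathcal{J}}^d_{\mathfrak{S}} \to S$ over all DVRs mapping to $S$. Since $\Simp^d(\mathcal{X}/S)$ is locally of finite type over $S$ and already satisfies the existence part of the valuative criterion (by \cite{esteves}), and since $\overline{\mathcal{J}}^d_{\mathfrak{S}}$ is locally closed in it, I would first establish that $\overline{\mathcal{J}}^d_{\mathfrak{S}}$ is \emph{finite type} over $S$ — this is immediate from Remark~\ref{complexity}, because over each geometric point the number of stable multidegrees is the finite number $c(\gammazero)$ summed over connected spanning subgraphs, so only finitely many strata $\mathcal{J}_{(\Gamma_0,\underline{\mathbf d})}$ are involved. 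Then properness amounts to: (a) \emph{universal closedness}, i.e.\ the existence part of the valuative criterion — given a DVR $R$ with fraction field $L$, a point $\Spec R \to S$, and a section $\Spec L \to \overline{\mathcal{J}}^d_{\mathfrak{S}}$, it extends (uniquely) to $\Spec R$; and (b) \emph{separatedness}, i.e.\ the uniqueness part.

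For universal closedness, I would take a DVR $R$ over $S$, let $0$ and $\eta$ be its closed and generic points, and let $F_\eta$ be a sheaf in $\overline{\mathcal{J}}^d_{\mathfrak{S}}$ over $\eta$; by the existence part for $\Simp^d$ there is a rank-$1$ torsion-free simple sheaf $\mathcal F$ over $\mathcal{X}_R/\Spec R$ restricting to $F_\eta$. The issue is that the central fiber $F_0$ of this chosen extension need not be $\sigma_0$-stable, but because $\Simp^d$ satisfies only \emph{existence} and not separatedness there is freedom: any two extensions differ by a twist. Concretely, after passing to $\tGamma_m$ for a suitable $m$ (the blow-up picture of Lemma~\ref{impliedbyopen}), the multidegree of the central fiber of \emph{any} extension lies in a fixed $\tw$-orbit, and by Proposition~\ref{coroll: chipfiring}\eqref{due} the lifted stability set $\widetilde{\sigma}_m$ is a \emph{complete} set of representatives for that $\tw(\tGamma_m)$-action — so some extension (obtained by a twist by a divisor supported on components of the special fiber, as in Remark~\ref{twister}) lands in the stable locus. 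Pushing this twisted line bundle on the blow-up back down via $\psi_*$ as in Lemma~\ref{impliedbyopen} produces a sheaf over $\mathcal{X}_R/\Spec R$ whose central fiber is $\sigma_0$-stable, and the compatibility axiom of Definition~\ref{familyfinejacstab} together with Lemma~\ref{impliedbyopen} guarantees that the stable multidegree downstairs is exactly the $f$-image prescribed by $\mathfrak{S}$. For separatedness, I would argue that two extensions with $\sigma_0$-stable central fibers must agree: any two extensions of $F_\eta$ differ by twisting by a $\tw(\tGamma_m)$-element, and since $\widetilde{\sigma}_m$ is a \emph{minimal} complete set of representatives (Proposition~\ref{coroll: chipfiring}\eqref{due} again), two stable central fibers in the same orbit have the same multidegree, hence — using simplicity and that the sheaves agree on the generic fiber, via a standard argument that a map of torsion-free sheaves over a DVR that is an isomorphism at $\eta$ and an isomorphism of the associated graded at $0$ is an isomorphism — the two extensions coincide.

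The main obstacle I anticipate is the existence half: controlling precisely the multidegree (and locally-free locus) of the special fiber of the extended sheaf, and knowing that \emph{twisting is enough} to reach the stable stratum. This is exactly why the blow-up combinatorics of Section~5 was developed: on $\mathcal{X}_R$ itself the available twists are by $\mathcal{O}(\sum d_v X_v)$ whose multidegree effect is $\tw(\Gamma)$-valued and does \emph{not} change the locally-free locus, so one cannot move between strata $\mathcal{J}_{(\Gamma_0,\underline{\mathbf d})}$ with different $\Gamma_0$; one must first pass to a semistable model $\mathcal Y = \mathbf{P}_{\mathcal X}(\mathcal F)$ (Lemma~\ref{impliedbyopen}, via \cite{esteves-pacini}), where the central fiber has been blown up at the non-locally-free nodes, and there the richer twister group $\tw(\tGamma_m)$ does let one reach every orbit representative. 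Making the bookkeeping between the multidegree on $\tGamma_m$ and the multidegree (plus $\delta$-term) on $\Gamma$ consistent with Equation~\eqref{comp-contractions} and the perturbation description of Lemma~\ref{impliedbyopen} is the delicate point; everything else (finite type, reduction to a DVR, the uniqueness argument) is routine. Once this is in place, combining with the openness of $\overline{\mathcal{J}}^d_{\mathfrak{S}}$ in $\Simp^d(\mathcal{X}/S)$ — which is immediate since each stratum is locally closed and $\overline{\mathcal{J}}^d_{\mathfrak{S}}$ is a union of strata that is open by Part~(1) of Definition~\ref{finejacstab} (the ``adding a chip'' condition is precisely the specialization-closedness of the complement) — finishes the proof of Lemma~\ref{properoverdelta} and hence of Theorem~\ref{cor: fromstabtojac}.
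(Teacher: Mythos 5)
Your proposal assembles the right machinery (finite type from the finiteness of the strata, resolution of the total space to a semistable model, the twister group, and Proposition~\ref{coroll: chipfiring}\eqref{due} giving a minimal complete set of representatives on the subdivided graph), and your existence/uniqueness dichotomy matches the paper's. But there is a genuine gap in the reduction step. You invoke the ordinary valuative criterion and therefore commit to handling test data $\Spec L \to \overline{\mathcal{J}}^d_{\mathfrak{S}}$ where $F_\eta$ is an \emph{arbitrary} stable point, i.e.\ possibly a non-locally-free torsion-free sheaf on a \emph{nodal} generic fiber. The paper instead uses the \emph{refined} valuative criterion \cite[\href{https://stacks.math.columbia.edu/tag/0H1Z}{Lemma~70.22.3}]{stacks}, exploiting that $\Pic^d(\mathcal{X}_\theta)$ is open and dense in $\overline{\mathcal{J}}^d_{\mathfrak{S}}$, so that one only ever has to extend a \emph{line bundle on a smooth curve} across the closed point of the DVR. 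This is not a cosmetic difference: your key assertion that ``the multidegree of the central fiber of any extension lies in a fixed $\tw$-orbit'' matched by $\widetilde{\sigma}_m$ is justified (via Remark~\ref{twister} on the regular blow-up $\widetilde{\mathcal{X}}$) precisely because all extensions of a line bundle on the smooth generic fiber are line bundles differing by $\mathcal{O}(\sum d_v \widetilde{X}_v)$. When $F_\eta$ is itself non-locally-free on a nodal curve, the semistable model of Lemma~\ref{impliedbyopen} modifies the generic fiber as well, the group of ``differences of extensions'' is no longer the twister group of the subdivided special fiber acting on all of $S^d(\widetilde{\Gamma}_m)$, and the interaction between $\sigma_\eta$-stability upstream and $\sigma_0$-stability downstream is exactly the compatibility constraint of Definition~\ref{familyfinejacstab} that you dismiss in one clause. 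Proposition~\ref{coroll: chipfiring}\eqref{due} as stated does not cover this relative situation.

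The fix is to replace your opening reduction: after base change to a DVR $\Delta \to S$, note that the generic fiber of $\mathcal{X}_\Delta/\Delta$ is smooth (since $\mathcal{X}_\theta/\theta$ is smooth and $S$ is irreducible), apply the refined criterion so the test section lands in $\Pic^d(\mathcal{X}_{\Delta,\eta})$, and then your blow-up-and-twist argument goes through verbatim on the regular model $\widetilde{\mathcal{X}}$ obtained by resolving the $A_{m(e)}$ singularities of the total space. Your separatedness argument is then also the paper's (minimality of $\widetilde{\sigma}_m$ forces the twisting line bundle to be trivial), and the ``isomorphism on associated graded'' detour is unnecessary. One further small point: openness is not part of Lemma~\ref{properoverdelta}; it is handled separately in the proof of Theorem~\ref{cor: fromstabtojac} via stability under generalization.
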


\begin{proof}
By Definitions~\ref{finejacstab} and \ref{defstable}, the scheme $\overline{\mathcal{J}}^d_{\mathfrak{S}}$ is the union of finitely many locally closed strata and hence of finite type and quasi-separated over $S$.
To prove properness, we apply the refined valuative criterion \cite[\href{https://stacks.math.columbia.edu/tag/0H1Z}{Lemma~70.22.3}]{stacks}.

Given the spectrum $\Delta$ of a DVR  and a commutative diagram (of solid arrows)
\[\xymatrix{
{\eta=\Delta\setminus\{0\}}\ar[r]^{ f}\ar@{^{(}->}[d]& {\Pic^d(\mathcal{X}_\theta)}\ar@{^{(}->}[r]&{\overline{\mathcal{J}}^d_{\mathfrak{S}}}\ar[d]\\
\Delta\ar[rr]\ar@{.>}[rru]&&{S},
}
\] 
we need to prove that there exists exactly one dotted arrow that keeps the diagram commutative. (Note that $\Pic^d(\mathcal{X}_{\theta})$ is open and dense in $\overline{\mathcal{J}}^d_{\mathfrak{S}}$). 
 
 Let us recall that morphisms to moduli spaces of sheaves (line bundles, simple sheaves) correspond to sheaves on the corresponding family of curves, but this correspondence is a bijection only after identifying two such sheaves whenever they differ by the pullback of a line bundle from the base. In the remainder of this proof we will slightly abuse the notation and assume this identification. 


Let us denote by $\mathcal X_{\Delta}/\Delta$ the base change of $\mathcal X/S$ under $\Delta\rightarrow S$ and by 
$\overline{\mathcal{J}}^d_{\Delta}\subseteq\Simp^d(\mathcal X/\Delta)$ the base change  of $\overline{\mathcal{J}}^d_{\mathfrak{S}}$. Let $s$ be the image of $0 \in \Delta$ under $\Delta \to S$. Denoting by $X= \mathcal{X}_s=\mathcal{X}_{\Delta,0}$  the  fiber over $0 \in \Delta$ of the family  $\mathcal{X}_{\Delta}/\Delta$, by the hypothesis that $\mathcal{X}_{\theta}/\theta$ is smooth we deduce that $\mathcal{X}_{\Delta}/\Delta$ is a smoothing of $X$. Moreover, we have that $\overline{\mathcal{J}}^d_{\Delta}$ is the disjoint union of $\Pic^d(\mathcal{X}_{\Delta, \eta})=\Simp^d(\mathcal{X}_{\Delta, \eta})$ and of $\fcj_\sigma \subseteq \Simp^d(X)$, where $\sigma$ is the stability assignment for $\mathcal{X}_s$. 

The morphism $f\colon \Delta\setminus\{0\}\rightarrow \Pic^d(\mathcal{X}_\theta)$ corresponds
to a line bundle $\mathcal [\mathcal{L}_{\eta}] \in \Pic^d(\mathcal{X}_{\Delta, \eta}/\eta)$. 
We know from  \cite{esteves} that $\Simp^d(\mathcal X_\Delta/\Delta)$ satisfies the existence part of the valuative criterion of properness,
 hence $\mathcal L_{\eta}$ extends (possibly nonuniquely) to an element $[\mathcal L'] \in \Simp^d(\mathcal{X}_\Delta/\Delta)$. Our proof is concluded if we can show that $\mathcal L_{\eta}$ extends to a unique $[\mathcal L] \in\overline{\mathcal{J}}^d_{ \Delta}$. When the total space  $\mathcal{X}_{\Delta}$ of the smoothing $\mathcal{X}_\Delta/\Delta$ is regular, this follows immediately from Remark~\ref{twister}.

In general, each  node $e$ of the central fiber $X \subset \mathcal{X}_\Delta$ is a singularity of type $A_{m(e)}$ of the total space $\mathcal X_\Delta$, for some $m(e) \in \mathbf{N}$. Let  $g \colon \widetilde{\mathcal{X}} \to \mathcal{X}_\Delta$ be the resolution of singularities obtained by blowing up $m(e)$ times each singularity $e$ of $\mathcal X_\Delta$. The central fiber $\widetilde{X} \subset \widetilde{\mathcal{X}}$ is therefore obtained by replacing each node $e$ of the central fiber $X$ of $\mathcal{X}_\Delta$ with a rational bridge of length $m(e)$. 
In other words, the dual graph $\widetilde{\Gamma}$ of $\widetilde{X}$ is obtained by subdividing $m(e)$ times each edge of $\Gamma=\Gamma(X)$  by adding $m(e)$ additional genus~$0$ vertices. 

We have now achieved that  $\widetilde{\mathcal{X}}$ is regular. Moreover, by \cite[Proposition~5.5]{esteves-pacini}, for each $[F] \in \Simp^d(X)$ there exists $[L] \in \Pic^d(\widetilde{X})$ such that $g_*(L)=F$, and the multidegree ${\bf d}$ of $F$ and the multidegree $\widetilde{\bf d}$ of any such $L$  are related by Equation~\eqref{lift}, where $\gammazero:=\gammazero(F)$ is the spanning subgraph consisting of all nodes where $F$ is locally free.

In order to prove the existence and uniqueness of an extension of $\mathcal L_{\eta}$ to  $\mathcal{X}_\Delta/\Delta$ with a sheaf in $\overline{\mathcal{J}}^d_{\Delta}$, we prove  that $g^*(\mathcal L_{\eta})$ extends uniquely on $\widetilde{\mathcal{X}}/\Delta$ to a line bundle whose restriction to $\widetilde{X}$ has multidegree in $\widetilde{\sigma}$, where the elements of $\widetilde{\sigma}$ are defined by lifting elements of $\sigma$ to $\widetilde{\Gamma}$ as in Equation~\eqref{lift}. Note that taking the direct image via $g$ maps line bundles on $\widetilde{X}$ whose multidegree is in $\widetilde{\sigma}$ to rank~$1$, torsion free, simple sheaves on $X$ whose multidegree is in $\sigma(\gammazero)$ for $\gammazero \subseteq \Gamma$ the spanning subgraph whose edges correspond to the exceptional rational bridges in $\widetilde{\Gamma}$ where the multidegree of the line bundle is equal to zero on all vertices. 

By Part~\eqref{due} of Proposition~\ref{coroll: chipfiring}, we deduce that  $\widetilde{\sigma}$ is a minimal complete set of representatives for the action of the twister group $\tw(\widetilde{\Gamma})$ on $S^d(\widetilde{\Gamma})$.

We  conclude, as in the regular case, that  $g^*(\mathcal L_{\eta})$ extends to a unique line bundle $\widetilde{\mathcal L}$ on $\widetilde{\mathcal{X}}/\Delta$ whose multidegree on the central fiber $\widetilde{X}$ is an element of $\widetilde{\sigma}$. Therefore $g_*{\widetilde{\mathcal{L}}}$ is the unique extension of $\mathcal L_{\eta}$ whose multidegree on $X$ is an element of $\sigma$. This completes the proof.
\end{proof}

 We are now ready to complete the proof of Theorem~\ref{cor: fromstabtojac}.

\begin{proof}[Proof of Theorem~\ref{cor: fromstabtojac}]  As $\overline{\mathcal{J}}^d_{\mathfrak{S}}$ is by definition constructible, openness is shown by proving that $\overline{\mathcal{J}}^d_{\mathfrak{S}} \subseteq \Simp^d(\mathcal{X}/S)$ is stable under generalization (\cite[ \href{https://stacks.math.columbia.edu/tag/0060}{Lemma~5.19.10}]{stacks}).   That  $\overline{\mathcal{J}}^d_{\mathfrak{S}}$ is stable under generalization follows from the first requirement of Definition~\ref{finejacstab}, and from the requirement that the family of stability assignments is compatible for graph morphisms arising from \'etale specializations (as specified in Definition~\ref{familyfinejacstab}).

Properness is   Lemma~\ref{properoverdelta}.
\end{proof}

\section{From smoothable fine compactified Jacobians to stability assignments}
In this section we show that a smoothable fine compactified Jacobian defines a stability assignment in a natural way. 

We start by considering the case of a curve in isolation and prove that the stability assignment associated with a (not necessarily smoothable) fine compactified Jacobian satisfies the first condition of Definition~\ref{finejacstab}. This enables us to describe in Section~\ref{examplesoffcj} all fine compactified Jacobians for two classes of examples: vine curves and curves whose dual graph has topological genus $1$. As an application, we prove in Lemma~\ref{lemma:support} that the stability assignment of a fine compactified Jacobian of an arbitrary nodal curve is nonempty for every connected spanning subgraph of the dual graph $\Gamma$ of the curve (a result needed in the proof of our main results, Corollaries~\ref{associsstab} and \ref{bijection}).

Then we restrict ourselves to the case where the fine compactified Jacobian is smoothable and proceed to prove the second condition of Definition~\ref{finejacstab} for the associated stability assignment (Corollary~\ref{associsstab}). Finally, we consider the case of families (Proposition~\ref{compcontract}) and we conclude by proving our main result, Corollary~\ref{bijection}, which establishes that the operations described in this section and in the previous one are inverse to each other. 

Throughout this section, we will consider a fixed nodal curve $X$ over a field
 extension $K$ of $k$, with dual graph $\Gamma$, and denote by $\fcj \subset \operatorname{Simp}^{d}(X)$ a degree $d$ fine compactified Jacobian of $X$.

\subsection{Definition and first properties}
\begin{definition} \label{assocassign}
We define the \textbf{associated assignment} $\sigma_{\fcj}$ of ${\fcj}$ as
\[
\sigma_{\fcj}=\{(\gammazero(F),\underline{\deg}(F)):\;[F]\in\fcj\},
\]
where $\gammazero(F)=\Gamma\setminus\NF(F)$ is the connected spanning subgraph of $\Gamma$ obtained by removing the edges corresponding to the nodes where $F$ fails to be locally free.
\end{definition}

A key  point  is that, if a sheaf is an element of a given fine compactified Jacobian, then so are all other sheaves that fail to be locally free on the same set of nodes and that have the same multidegree:

\begin{lemma} \label{exists-forall}
	Let  $\fcj \subset \Simp^d(X)$ be a degree $d$ fine compactified Jacobian, and assume that $(\gammazero,\underline{\mathbf d}) \in \sigma_{\fcj}$. Then $\fcj$ contains all sheaves $[F]\in \Simp^d(X)$ with $\NF(F) =\Gamma\setminus\gammazero$ and $\underline{\deg}(F)=\underline{\mathbf d}$.

\end{lemma}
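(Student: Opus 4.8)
The key observation is that the stratum $\mathcal{J}_{(\gammazero,\underline{\mathbf d})}\subseteq\Simp^d(X)$ from Remark~\ref{sec:strat}, consisting of all sheaves $F$ with $\NF(F)=\Gamma\setminus\gammazero$ and $\underline{\deg}(F)=\underline{\mathbf d}$, is irreducible. Granting this, the argument is short: by hypothesis $\fcj$ contains some $[F_0]\in\mathcal{J}_{(\gammazero,\underline{\mathbf d})}$, so $\fcj\cap\mathcal{J}_{(\gammazero,\underline{\mathbf d})}$ is a nonempty open subset of the irreducible space $\mathcal{J}_{(\gammazero,\underline{\mathbf d})}$ (open because $\fcj$ is open in $\Simp^d(X)$). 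On the other hand, $\fcj$ is proper, hence closed in $\Simp^d(X)$, so $\fcj\cap\mathcal{J}_{(\gammazero,\underline{\mathbf d})}$ is also closed in $\mathcal{J}_{(\gammazero,\underline{\mathbf d})}$. A nonempty subset of an irreducible space that is both open and closed must be everything; therefore $\mathcal{J}_{(\gammazero,\underline{\mathbf d})}\subseteq\fcj$, which is exactly the claim.

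So the real content is the irreducibility of the stratum $\mathcal{J}_{(\gammazero,\underline{\mathbf d})}$. I would argue this as follows. Let $\nu\colon X'\to X$ be the partial normalization of $X$ at the nodes in $\Gamma\setminus\gammazero$ (so that $X'$ has dual graph $\gammazero$, is connected since $\gammazero$ is a connected spanning subgraph, and a sheaf $F$ with the prescribed $\NF(F)$ is of the form $\nu_*L$ for a line bundle $L$ on $X'$). Pushforward along $\nu$ identifies such sheaves $F$ with line bundles $L$ on $X'$, and under this identification the multidegree $\underline{\deg}(F)=\underline{\mathbf d}$ corresponds to fixing the multidegree of $L$ on the components of $X'$. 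Hence $\mathcal{J}_{(\gammazero,\underline{\mathbf d})}$ is isomorphic (or at least maps bijectively, which suffices after checking it is a morphism of schemes) to the fixed-multidegree Picard scheme $\Pic^{\underline{\mathbf d}}(X')$ of the connected nodal curve $X'$; and the latter is irreducible — it is a torsor under the connected group scheme $\Pic^{\underline{\mathbf 0}}(X')$, which is an extension of an abelian variety by a torus. (One can cite the standard structure theory of the Picard scheme of a nodal curve, e.g. via \cite{blr} Chapter~9, or the references to \cite{AK}, \cite{esteves} already in play; simpleness of $F$ — guaranteed since $\gammazero$ is connected — is what makes this sit inside $\Simp^d(X)$.)

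The main obstacle I anticipate is making the identification $\mathcal{J}_{(\gammazero,\underline{\mathbf d})}\cong\Pic^{\underline{\mathbf d}}(X')$ precise at the level of schemes rather than just sets of geometric points — i.e. checking that pushforward along $\nu$ gives an honest (iso)morphism of algebraic spaces onto the stratum, with the stratum carrying its reduced (or natural locally closed) structure. If one prefers to sidestep this, it is enough to establish irreducibility of $\mathcal{J}_{(\gammazero,\underline{\mathbf d})}$ as a topological space: connectedness follows since any two line bundles on $X'$ of the same multidegree differ by an element of the connected group $\Pic^{\underline{\mathbf 0}}(X')$ and one checks the resulting family of pushforwards stays in the stratum; and the stratum is smooth (the deformation theory of $F=\nu_*L$ inside $\Simp^d(X)$ along the equimultidegree, equi-$\NF$ locus is unobstructed), so connected plus smooth gives irreducible. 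Either route reduces cleanly to the well-understood geometry of Picard schemes of connected nodal curves, after which the open-and-closed argument above finishes the proof.
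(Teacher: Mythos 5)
Your reduction to irreducibility of the stratum, and the irreducibility itself (via pushforward from the partial normalization $X'\to X$ and the structure of the Picard scheme of the connected curve $X'$), are fine and match the first half of the paper's argument. The gap is the sentence ``$\fcj$ is proper, hence closed in $\Simp^d(X)$.'' Properness of $\fcj$ over $\spec(K)$ does \emph{not} imply that the open immersion $\fcj\hookrightarrow\Simp^d(X)$ has closed image, because $\Simp^d(X)$ is not separated (as noted explicitly in Section~\ref{secfinejac}); the implication ``proper source $\Rightarrow$ closed image'' needs a separated target. The failure is not hypothetical here: for a vine curve of type $2$, the closure in $\Simp^d(X)$ of an open stratum $\mathcal J_{(\Gamma,\underline{\mathbf d})}\cong\mathbf G_m$ contains \emph{several} non-separated limit points over each degeneration direction (cf.\ the discussion in the proof of Lemma~\ref{lemma:genus1}), while $\fcj$ contains only one of them, so $\fcj$ is genuinely not closed in $\Simp^d(X)$. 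Your ``nonempty, open and closed in an irreducible space'' argument therefore cannot be completed as stated.

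What the paper does instead is run the valuative criterion by hand and then \emph{identify} the limit. After reducing to $\gammazero=\Gamma$, take $[F]\in\mathcal J^{\underline{\mathbf d}}(X)$ and a line bundle $L$ on $X\times\Delta$ with $L_\eta\in\fcj\cap\mathcal J^{\underline{\mathbf d}}(X)$ (possible since that intersection is dense, by the irreducibility you proved) and $L_0=F$. Properness of $\fcj$ produces a family $L'$ with $L'_{X\times\eta}=L_{X\times\eta}$ and $[L'_0]\in\fcj$ --- but a priori $L'_0$ could be a different sheaf from $F$ (of different multidegree, or not locally free), precisely because of the non-separatedness. The missing ingredient is the observation that $L'\otimes L^{-1}$ defines a morphism $\Delta\to\Simp^0(X)$ sending the generic point to the closed point $[\mathcal O_X]$, hence constant, which forces $L'_0\cong L_0=F$. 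Some substitute for this identification step is unavoidable; without it your proof does not close.
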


\begin{proof}
By possibly passing to the partial normalization of $X$ at the nodes in $\Gamma\setminus\gammazero$ we may assume that $\gammazero=\Gamma$. Thus we aim to prove that if $\mathcal{J}^{\underline{\mathbf d}}(X)\cap \fcj \neq \emptyset$, then  $\mathcal{J}^{\underline{\mathbf d}}(X) \subset \fcj$.

The moduli space of line bundles $\mathcal{J}^{\underline{\mathbf d}}(X)$  of multidegree $\underline{\mathbf d}$ is irreducible and $\fcj$ is open, so 
$\mathcal{J}^{\underline{\mathbf d}}(X) \cap \fcj \neq \emptyset$ implies that $\mathcal{J}^{\underline{\mathbf d}}(X)\cap \fcj$
is dense in $\mathcal{J}^{\underline{\mathbf d}}(X)$. For $[F]$ in $\mathcal{J}^{\underline{\mathbf d}}(X)$ we can then find a line bundle $L$ on  $X \times \Delta$, for some $\Delta$, spectrum of a DVR, whose generic element $L_{\eta}$ is in $\mathcal{J}^{\underline{\mathbf d}}(X) \cap \fcj$ and whose special fiber $L_0$ equals $F$.

By properness of $\fcj$ there exists a sheaf $L'$ on $X\times \Delta$ with the property that  $L'_{X \times \eta}=L_{X \times \eta}$ (here $\eta$ is the generic point of $\Delta$) and whose special fiber $L'_0$ is in  $\fcj$. Then $L' \otimes L^{-1}$ defines a morphism $\Delta \to \Simp^0(X)$ that maps the generic point $\eta$ to a closed point, hence it must be the constant morphism. We conclude, in particular, that $[F]=[L_0']$ and so $[F] \in \fcj$.

\end{proof}

As a consequence of Lemma~\ref{impliedbyopen}, the associated assignment to a fine compactified Jacobian satisfies the first condition of Definition~\ref{finejacstab}:
\begin{corollary} \label{cond1}
The associated assignment $\sigma_{\fcj}$ satisfies
  \begin{equation} \label{Eqn: ChipAdding}
(\gammazero,\underline{\mathbf d})\in\sigma_{\fcj} \Rightarrow (\gammazero\cup\{e\},\underline{\mathbf d}+\underline{\mathbf e}_{v_1}),(\gammazero\cup\{e\},\underline{\mathbf d}+\underline{\mathbf e}_{v_2})\in\sigma_{\fcj}, 
\end{equation}
for all spanning subgraphs $\gammazero\subseteq\Gamma$ and all edges $e$ of $\Edges(\Gamma)\setminus\Edges(\gammazero)$ with endpoints $v_1$ and $v_2$.
\end{corollary}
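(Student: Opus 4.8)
The plan is to deduce Corollary~\ref{cond1} directly from Lemma~\ref{impliedbyopen}, applied in the special case of a smoothing whose generic fibre is \emph{constant}. Fix $(\gammazero,\underline{\mathbf d})\in\sigma_{\fcj}$; by definition there is $[F_0]\in\fcj$ with $\gammazero(F_0)=\gammazero$ and $\underline{\deg}(F_0)=\underline{\mathbf d}$. Fix an edge $e\in\Edges(\Gamma)\setminus\Edges(\gammazero)$ with endpoints $v_1,v_2$, and say we want to produce an element of $\sigma_{\fcj}$ over $\gammazero\cup\{e\}$ with multidegree $\underline{\mathbf d}+\underline{\mathbf e}_{v_1}$ (the $v_2$ case being symmetric). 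The idea is that such a multidegree is realised by a rank~$1$ torsion-free sheaf $F_1$ on $X$ with $\NF(F_1)=\NF(F_0)\setminus\{e\}$, i.e. $F_1$ is locally free at the node $e$, and that $F_0$ is a specialization of $F_1$: concretely, $F_1$ can be built by gluing the normalization data of $F_0$ at $e$ according to the chip at $v_1$, and one checks that in $\Simp^d(X)$ there is a point of $\mathcal J_{(\gammazero,\underline{\mathbf d})}$ in the closure of $\mathcal J_{(\gammazero\cup\{e\},\underline{\mathbf d}+\underline{\mathbf e}_{v_1})}$ (this is the well-known local picture of the versal deformation of a node, where smoothing a node of a torsion-free-but-not-locally-free sheaf gives a one-parameter family whose general member is locally free of one of the two "adjacent" multidegrees).

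Here is how I would organize it. First, take a trivial smoothing: let $\Delta$ be the spectrum of a DVR with residue field $\overline{K}$ (after base change we may assume $K$ algebraically closed, since being in $\sigma_{\fcj}$ is checked on geometric points) and set $\mathcal X = X\times\Delta$, so the dual graphs of both fibres are $\Gamma$ and $f\colon\Gamma\to\Gamma$ is the identity. Next, produce a section $\tau\colon\Delta\to\Simp^d(\mathcal X/\Delta)$ whose special fibre is $F_0$ and whose generic fibre $F_{\overline\eta}$ is a line bundle — here I would invoke the standard fact (already implicit in Remark~\ref{sec:strat} and Lemma~\ref{impliedbyopen}) that the stratum $\mathcal J_{(\gammazero,\underline{\mathbf d})}$ lies in the closure of $\mathcal J_{(\gammazero\cup\{e\},\underline{\mathbf d}+\underline{\mathbf e}_{v_i})}$, so we may choose $F_{\overline\eta}$ with $\gammazero(F_{\overline\eta})=\gammazero\cup\{e\}$ and $\underline{\deg}(F_{\overline\eta})=\underline{\mathbf d}+\underline{\mathbf e}_{v_1}$; in fact one only needs to exhibit \emph{one} such one-parameter smoothing of $F_0$, which is a purely local computation at the node $e$. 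By properness of $\fcj$ (this is where Definition~\ref{def:finecompjac} enters) together with Lemma~\ref{exists-forall}, the generic point of this $\Delta$ must already map into $\fcj$: indeed $F_0\in\fcj$ and $\fcj$ is open, so a punctured-disc neighbourhood of $0$ maps into $\fcj$, hence $F_{\overline\eta}\in\fcj$, which gives $(\gammazero\cup\{e\},\underline{\mathbf d}+\underline{\mathbf e}_{v_1})\in\sigma_{\fcj}$. Repeat with $v_2$.

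Actually, a cleaner route avoids even constructing the family by hand: apply Lemma~\ref{impliedbyopen} in reverse. Since $\fcj$ is open and contains $[F_0]$, and since $[F_0]$ lies in the closure of the stratum $\mathcal J_{(\gammazero\cup\{e\},\underline{\mathbf d}+\underline{\mathbf e}_{v_1})}$, openness of $\fcj$ forces a Zariski-dense open subset of that stratum to lie in $\fcj$; any line bundle in it has the right combinatorial type, giving the desired membership in $\sigma_{\fcj}$. The one genuine input I would isolate and emphasize is the closure relation among strata: the fact that adding a chip at an endpoint of a smoothed node produces a multidegree whose stratum has $\mathcal J_{(\gammazero,\underline{\mathbf d})}$ in its closure. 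I expect this to be the main point to get right, and I would establish it either by a direct local deformation-theoretic computation at a single node (gluing $\mathcal O$ to the two branches and deforming $xy=0$ to $xy=t$) or by citing the explicit description of the stratification of $\Simp^d$ from one of the references already used (e.g.\ \cite{mv}). Everything else — the passage to algebraically closed $K$, the use of properness and openness of $\fcj$, and the appeal to Lemma~\ref{exists-forall} to guarantee the whole stratum is in $\fcj$ — is routine.
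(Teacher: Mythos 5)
Your proposal is correct and follows essentially the same route as the paper: the paper derives Corollary~\ref{cond1} from Lemma~\ref{impliedbyopen} together with openness of $\fcj$, exactly as in your ``cleaner route'' --- the one genuine input being, as you rightly isolate, the closure relation placing $\mathcal J_{(\gammazero,\underline{\mathbf d})}$ in the closure of each stratum $\mathcal J_{(\gammazero\cup\{e\},\underline{\mathbf d}+\underline{\mathbf e}_{v_i})}$, which comes from the local deformation of a non-locally-free sheaf at a node. One small correction: where you write ``by properness of $\fcj$'' the argument you then give uses only openness, and indeed the paper explicitly remarks that properness plays no role in this statement (it holds for any open subscheme of $\Simp^d(X)$).
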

Note that the above statement remains valid (with the same proof) for an arbitrary \emph{open} subscheme of $\Simp^d(X)$, properness does not play any role here.

For every connected spanning subgraph $G$ of $\Gamma$, the pairs $(\gammazero,\underline{\mathbf d})\in\sigma$ with $\gammazero\subset G$ define a fine compactified Jacobian on a partial normalization of the curves $X$.
\begin{lemma} \label{pullbackfcj} Let $f \colon X' \to X$ be a partial normalization of $X$ at some nodes $e_1, \ldots, e_k \in \Edges(\Gamma)$ and let $j_f \colon \Simp^{d-k}(X') \to \Simp^d(X)$ be the morphism induced by taking the pushforward along $f$. If $\fcj$ is a fine compactified Jacobian of $X$ then any geometrically connected component of $j_f^{-1} (\fcj)$ is a fine compactified Jacobian of $X'$. Moreover, for every $\gammazero \subseteq \Gamma(X')$, we have  \begin{equation} \label{geomconn} \bigcup_{J'\subset j_f^{-1}(\fcj)}\sigma_{J'}(\gammazero)=\sigma_{\fcj}(\gammazero),\end{equation}
where the union is over all geometrically connected components $J'$ of $j_f^{-1}(\fcj)$.
\end{lemma}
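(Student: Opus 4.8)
The plan is to deduce both assertions from one structural input about the pushforward map $j_f$ together with a routine unwinding of definitions. First I would record that $j_f\colon\Simp^{d-k}(X')\to\Simp^d(X)$ is a closed immersion whose image is the closed subscheme $Z\subseteq\Simp^d(X)$ consisting of those sheaves that fail to be locally free at each of $e_1,\dots,e_k$ (the locally free locus being open, $Z$ is indeed closed; that $j_f$ is a closed immersion onto $Z$ is standard, cf.\ \cite{AK,esteves} and the stratification recalled in Remark~\ref{sec:strat}). In the same breath I would check the elementary bookkeeping used below: for a sheaf $F'$ on $X'$ one has $\NF(f_*F')=\{e_1,\dots,e_k\}\sqcup\NF(F')$, so that $\gammazero(f_*F')=\gammazero(F')$ once $\Gamma(X')=\Gamma\setminus\{e_1,\dots,e_k\}$ is regarded inside $\Gamma$, and $\underline{\deg}(f_*F')=\underline{\deg}(F')$; in particular $j_f$ raises the total degree by $k$, in accordance with $p_a(X)=p_a(X')+k$.

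Granting this, the first assertion is quick. Since $\fcj$ is open and $j_f$ is a morphism, $j_f^{-1}(\fcj)$ is open in $\Simp^{d-k}(X')$; and since $j_f$ restricts to an isomorphism onto $Z$, we get $j_f^{-1}(\fcj)\cong\fcj\cap Z$, a closed subscheme of the proper $K$-scheme $\fcj$, hence itself proper over $\Spec(K)$ and in particular Noetherian, so locally connected. Any geometrically connected component $J'$ of $j_f^{-1}(\fcj)$ is therefore open and closed in this proper scheme, hence proper over $\Spec(K)$; being also open in $\Simp^{d-k}(X')$ and geometrically connected, $J'$ satisfies Definition~\ref{def:finecompjac}, i.e.\ it is a degree~$(d-k)$ fine compactified Jacobian of $X'$.

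For the ``moreover'' part I would fix $\gammazero\subseteq\Gamma(X')$ and prove the two inclusions directly. If some geometrically connected component $J'$ has $\underline{\mathbf d}\in\sigma_{J'}(\gammazero)$, realized by $[F']\in J'$, then $f_*F'\in\fcj$ realizes $\underline{\mathbf d}\in\sigma_{\fcj}(\gammazero)$ by the identities above. Conversely, if $\underline{\mathbf d}\in\sigma_{\fcj}(\gammazero)$ is realized by $[F]\in\fcj$, then $\gammazero\subseteq\Gamma(X')$ forces $e_1,\dots,e_k\in\NF(F)$, so $[F]\in Z$ and hence $[F]=j_f([F'])$ for a unique $[F']\in\Simp^{d-k}(X')$; now $[F']\in j_f^{-1}(\fcj)$ lies in some geometrically connected component $J'$, and the identities give $\underline{\mathbf d}\in\sigma_{J'}(\gammazero)$. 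Together these yield \eqref{geomconn}. (With a little more work one can invoke Lemma~\ref{exists-forall} to see that the irreducible stratum $\mathcal J_{(\gammazero,\underline{\mathbf d})}(X')$ lies in a single component of $j_f^{-1}(\fcj)$, so that the union in \eqref{geomconn} is actually disjoint; this is not needed here.)

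The only genuinely delicate point is the opening structural claim that $j_f$ is a closed immersion with image the non-locally-free locus $Z$; everything else is either formal or a routine unwinding of the definitions of $\gammazero(-)$, $\underline{\deg}(-)$ and $\sigma_{(-)}$. For that structural fact I would appeal to the existing literature rather than reprove it.
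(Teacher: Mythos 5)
Your argument is correct in substance and reaches both conclusions, but it takes a genuinely different route from the paper, and the difference matters for how much structural input about $j_f$ you need. You reduce everything to the claim that $j_f$ is a closed immersion onto the non-locally-free locus $Z$, after which properness of $j_f^{-1}(\fcj)$ is formal (closed subscheme of a proper scheme) and the ``moreover'' part is bookkeeping. The paper instead proves properness by hand: openness is base change, separatedness and finite type are inherited from $\fcj$, and the existence part of the valuative criterion is verified directly by pushing a family $\mathcal F'$ forward to $X$, extending it inside $\fcj$ by properness of $\fcj$, and pulling back via $f^*(-)/\text{torsion}$, using the identity $f^*(f_*\tilde{\mathcal F})/\text{torsion}=\tilde{\mathcal F}$ from Alexeev. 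The trade-off: your approach is shorter and conceptually cleaner, but the closed-immersion statement is strictly stronger than anything the paper invokes, and the references you gesture at (\cite{AK}, \cite{esteves}, Remark~\ref{sec:strat}) do not actually contain it --- Remark~\ref{sec:strat} is a set-theoretic stratification, and what is needed is a scheme-theoretic statement (it does hold, e.g.\ via the \'etale-local description of $\Simp^d(X)$ at a non-locally-free sheaf as a product of nodes, but that requires a precise citation to the local-structure literature rather than a ``cf.''). Your bookkeeping identities ($\NF(f_*F')=\{e_1,\dots,e_k\}\sqcup\NF(F')$, $\underline{\deg}(f_*F')=\underline{\deg}(F')$, surjectivity onto $Z$ via $F\mapsto f^*F/\text{torsion}$) are all correct and give the two inclusions in \eqref{geomconn} exactly as the paper's one-line ``follows from the definition'' intends. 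If you want to keep your route, the one thing to fix is to replace the vague appeal to the literature for the closed-immersion claim with either a precise reference or the weaker, self-contained valuative-criterion argument the paper uses.
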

Note that Lemma~\ref{pullbackfcj} does not exclude the possibility that $j_f^{-1} (\fcj)= \emptyset$. This possibility will be excluded later, in Lemma~\ref{lemma:support}. Moreover, we observe that the pullback $j_f^{-1} (\fcj)$ is not necessarily connected: an example is given by nonsmoothable fine compactified Jacobians of curves of arithmetic genus $1$, see the $\rho\geq 2$ case of \cite[Lemma~3.7]{paganitommasi}. 

\begin{proof}
     The fact that $j_f^{-1} (\fcj)$  is open in $\Simp^{d-k}(X')$ follows from the fact that openness is stable under base change. 

    Moreover, $j_f^{-1} (\fcj)$ is separated and of finite type, because $\fcj$ is. To complete the first part of the statement, it remains to show that $j_f^{-1} (\fcj)$ satisfies the existence part of the valuative criterion of properness.

    Let $\Delta \setminus \{0 \} \to j_f^{-1} (\fcj)$ be a morphism out of a DVR without its closed point, which corresponds to an element $\mathcal{F'}$ in $\Simp^{d-k}( X'\times \eta /\eta)$. 
    Because $\fcj$ satisfies the existence part of the valuative criterion of properness, there exists a family of sheaves $\tilde{\mathcal{F}}$ in $\Simp^{d-k}(X \times \Delta/\Delta)$ whose restriction to $\eta$ equals $f_* (\mathcal{F})$. Then the family $f^*(\tilde{\mathcal{F}})/\text{torsion}$ is an extension of the original family $\mathcal{F}'$, and with central fiber belonging to $j_f^{-1} (\fcj)$ (because $f^*(f_*(\tilde{\mathcal{F}}))/\text{torsion}=\tilde{\mathcal{F}}$ -- see \cite[Lemma~1.5]{alexeev-cjtm}). This concludes the first part.

    The second part, or Equation~\eqref{geomconn}, follows immediately from the definition of an associated assignment.
\end{proof}

\subsection{Examples}\label{examplesoffcj}
In this subsection, we consider fine compactified Jacobians for two classes of curves: vine curves (defined in Example~\ref{ex: vinecurves}) and curves whose dual graph $\Gamma$ has first Betti number $1$. 

In the case of vine curves,  Corollary~\ref{cond1} allows us to describe explicitly all fine compactified Jacobians and conclude that they are automatically smoothable.

\begin{example} \label{vinecurves-nonsm} Let $X$ be a vine curve of type $t$ (as defined in Example~\ref{ex: vinecurves}) over some algebraically closed field $K$.
We claim that any fine compactified Jacobian $\fcj$ of $X$  is of the form $\fcj = \fcj_\sigma(X)$ for some stability assignment $\sigma$ as in Example~\ref{vinestab}. In particular, by Corollary~\ref{stab->fine}, $\fcj$ is smoothable.

This is clear when $t=1$, as in that case each geometrically connected component of $\Simp^d(X)=\Pic^d(X)$ is proper.

For the case of arbitrary $t>1$, we work by induction on $t$, using only Part~(1) of Definition~\ref{finejacstab} and the claim of Lemma~\ref{exists-forall} (neither of which relies on the hypothesis of smoothability).

For $e \in \Edges(\Gamma(X))$, we let $X_e$ be the  partial normalization of $X$ at the node $e$ only.  In view of $t >1$ we have that $X_e$ is connected  
and therefore $\Simp^{d-1}(X_e) \neq \emptyset$. Let $j_e \colon \Simp^{d-1}(X_e) \to \Simp^{d}(X)$ be the morphism obtained by taking the pushforward along the partial normalization $X_e \to X$.  

We observe that there exists an $e \in \Edges(\Gamma(X))$ such that  $j_e^{-1}\left(\fcj\right) \neq \emptyset$,  for otherwise we would have that $\fcj$ is contained in $\Pic^d(X)$, so $\fcj$ could not be proper.
Fix one such edge $e$ and let $\fcj_e \subseteq j_e^{-1}\left(\fcj\right)$ be a connected component. Then $\fcj_e$ is a degree $d-1$ fine compactified Jacobian of $X_e$, and $X_e$ is a vine curve of type $t-1$. We  apply the induction hypothesis to deduce that there exists  some degree $d-1$ stability assignment $\sigma_e$ as described in Example~\ref{vinestab} such that $\fcj_e=\fcj_{\sigma_e}(X_e)$.

Let $\sigma$ be the stability assignment on $X$ of the type studied in Example~\ref{vinestab} that is minimal among those that respect Condition~(1) of Definition~\ref{finejacstab} and whose restriction to $X_e$ equals $\sigma_e$.  By Corollary~\ref{cond1} we have that $\fcj_e=\fcj_\sigma(X)$ is contained in $\fcj$. By Corollary~\ref{stab->fine}, $\fcj_\sigma(X)$ is open in $\Simp^d(X)$, it is proper and geometrically connected. As $\fcj$ also enjoys these three properties, we conclude that $\fcj=\fcj_\sigma(X)$.
\end{example}

Next, we discuss curves whose unlabelled dual graph has genus $1$ and we prove that the combinatorial description of the strata of fine compactified Jacobians of nodal curves of arithmetic genus $1$ given in \cite[Section 3]{paganitommasi} generalizes to the fine compactified Jacobians of an arbitrary nodal curve $X$ with dual graph $\Gamma$ with $b_1(\Gamma)=1$.

\begin{lemma}\label{lemma:genus1}
If $\fcj$ is a fine compactified Jacobian of a curve $X$ with $b_1(\Gamma)=1$, then $\sigma_{\fcj}(\gammazero) \neq \emptyset$ holds for all connected spanning subgraphs $\gammazero \subseteq \Gamma$.    \end{lemma}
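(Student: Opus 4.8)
The plan is to reduce the statement to the already-settled case of a single cycle graph via the partial normalization technique of Lemma~\ref{pullbackfcj}, and then to do a small amount of bookkeeping with break divisors on a cycle. Since $b_1(\Gamma) = 1$, the graph $\Gamma$ is a single cycle $C$ of some length $\ell \geq 1$ with several trees hanging off its vertices. A connected spanning subgraph $\gammazero \subseteq \Gamma$ must contain every bridge of $\Gamma$ (removing a bridge disconnects), so $\gammazero$ is obtained from $\Gamma$ by removing some subset $E$ of the $\ell$ cycle-edges; for $\gammazero$ to be connected we may remove \emph{at most one} cycle-edge. Thus there are exactly two types of connected spanning subgraphs: $\gammazero = \Gamma$ itself (complexity $\ell$), and $\gammazero = \Gamma \setminus \{e\}$ for a single cycle-edge $e$ (a spanning tree, complexity $1$).

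First I would treat the spanning-tree subgraphs. Let $e$ be a cycle-edge, and let $f \colon X_e \to X$ be the partial normalization of $X$ at the node $e$; then $X_e$ has dual graph $\Gamma \setminus \{e\}$, which is a tree, so $X_e$ is a tree-like (totally degenerate in genus terms, but with the same component genera) curve and $\Simp^{d-1}(X_e) = \Pic^{d-1}(X_e)$ is itself proper; in particular it is a fine compactified Jacobian of $X_e$ and is nonempty. The point is now to show that $j_f^{-1}(\fcj)$ is nonempty, because then Equation~\eqref{geomconn} of Lemma~\ref{pullbackfcj} gives $\sigma_{\fcj}(\gammazero) \supseteq \sigma_{J'}(\gammazero) \neq \emptyset$ for a connected component $J'$ (the latter being nonempty since $J'$ is a fine compactified Jacobian of $X_e$, which is irreducible). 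If $j_f^{-1}(\fcj) = \emptyset$ for \emph{every} cycle-edge $e$, then $\fcj$ would contain no sheaf failing to be locally free at any cycle-edge, and (since it can never fail to be locally free at a bridge) $\fcj \subseteq \Pic^d(X)$ would be contained in the non-proper locus — more precisely, $\Pic^d(X) \cong \Pic^0(X) \times (\text{lattice})$ is not proper because $b_1(\Gamma) = 1$ gives a $\mathbf{G}_m$-factor in the generalized Jacobian — contradicting properness of $\fcj$. So at least one $e_0$ has $j_{f}^{-1}(\fcj) \neq \emptyset$, settling $\gammazero = \Gamma \setminus \{e_0\}$.

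The remaining work is (a) to handle the subgraphs $\Gamma \setminus \{e\}$ for the \emph{other} cycle-edges $e$, and (b) to handle $\gammazero = \Gamma$ itself. For (a) and (b) simultaneously I would invoke Corollary~\ref{cond1}: starting from a pair $(\Gamma \setminus \{e_0\}, \underline{\mathbf d}) \in \sigma_{\fcj}$, adding the edge $e_0$ back shows $(\Gamma, \underline{\mathbf d} + \underline{\mathbf e}_{v}) \in \sigma_{\fcj}$ for each endpoint $v$ of $e_0$; so $\sigma_{\fcj}(\Gamma) \neq \emptyset$, settling (b). For (a), fix another cycle-edge $e$; I want to produce an element of $\sigma_{\fcj}$ supported on $\Gamma \setminus \{e\}$. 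The cleanest route is again via $j_e^{-1}(\fcj)$: it suffices to show this is nonempty. Here I would argue by a ``rotation'' or connectedness argument on the cycle — a sheaf on $X$ locally free away from $e_0$ can be degenerated within $\fcj$ (which is proper, hence closed under the relevant specializations) to one locally free away from $e$, moving the non-locally-free point around the cycle one node at a time; equivalently, one shows that the union of strata of $\fcj$ indexed by subgraphs $\Gamma \setminus \{e'\}$ (over all cycle-edges $e'$) together with those indexed by $\Gamma$ forms the whole of $\fcj$, and that $\fcj$ being connected and hitting the $\Gamma$-stratum forces it to hit the stratum of \emph{every} $\Gamma \setminus \{e'\}$, since in the stratification of $\Simp^d(X)$ the stratum $\mathcal J_{(\Gamma, \underline{\mathbf d})}$ lies in the closure of $\mathcal J_{(\Gamma \setminus \{e'\}, \underline{\mathbf d}')}$ for suitable $\underline{\mathbf d}'$ and each $e'$. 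I expect \textbf{this last point — the rotation/closure argument showing $\fcj$ meets the stratum of every $\Gamma \setminus \{e\}$ — to be the main obstacle}; everything else is either the non-properness of $\Pic^d$ when $b_1 = 1$ (standard) or a direct application of Lemma~\ref{pullbackfcj} and Corollary~\ref{cond1}. An alternative for this step is to bypass strata entirely and use the explicit genus-$1$ classification of \cite[Section~3]{paganitommasi} after partial-normalizing away all the hanging trees (which does not change $b_1$), reducing to a cycle curve of arithmetic genus $1$ where every fine compactified Jacobian is shown in loc.\ cit.\ to have exactly one stratum over each connected spanning subgraph.
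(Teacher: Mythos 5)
Your reduction of the combinatorics (connected spanning subgraphs of $\Gamma$ are $\Gamma$ itself and $\Gamma\setminus\{e\}$ for a single cycle-edge $e$), your argument that at least one spanning tree carries a nonempty $\sigma_{\fcj}$ (this is essentially Lemma~\ref{Texists}), and your use of Corollary~\ref{cond1} to handle $\gammazero=\Gamma$ are all correct and consistent with the paper. But the step you yourself flag as ``the main obstacle'' --- that $\fcj$ meets the stratum of \emph{every} $\Gamma\setminus\{e\}$, not just one --- is the entire content of the lemma, and neither of your two sketched routes closes it. The rotation/closure sketch as written has the specialization direction reversed ($\mathcal J_{(\Gamma\setminus\{e'\},\underline{\mathbf d}')}$ lies in the closure of $\mathcal J_{(\Gamma,\underline{\mathbf d})}$, not the other way around), and even with the correct direction, connectedness and openness of $\fcj$ do not force it to contain any particular closed stratum: by Lemma~\ref{exists-forall} $\fcj$ is a union of strata, and an open union containing $\mathcal J_{(\Gamma,\underline{\mathbf d})}$ may a priori omit all the closed strata attached to a given edge $e$. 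Ruling that out requires properness \emph{and} an analysis of which closed strata a degenerating family of line bundles can limit to --- properness only yields \emph{some} limit in \emph{some} closed stratum, not one over each $e$.

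Your alternative route (import the genus-$1$ classification of \cite[Section~3]{paganitommasi}) is in fact the paper's strategy, but it cannot be applied as literally as you state: after splitting off the hanging trees via the product decomposition at separating nodes (\cite[Lemma~2.7]{paganitommasi}), one is left with a necklace whose vertices carry arbitrary genera, not an arithmetic-genus-$1$ curve. The paper bridges this by showing that a one-parameter family of line bundles in the open stratum that does not converge there extends to $\Simp^d(X)$ in exactly $n$ ways, one limit in each closed stratum $\mathcal J_{(\Gamma\setminus\{e\},\cdot)}$, so that the closure of the open stratum is a non-separated compactification of the semi-abelian variety $\mathcal J^{\mathbf 0}(X)$ with $n$ copies of the divisors over the $0$- and $\infty$-sections; it then argues that the poset of the stratification, and hence the combinatorial structure of neighbourhoods of points in the closed strata, is independent of the genus weighting on the vertices, and only at that point invokes the genus-$1$ analysis. (Also, in that analysis non-smoothable fine compactified Jacobians can have \emph{more} than one stratum over a connected spanning subgraph --- see the $\rho\geq 2$ examples --- so ``exactly one stratum'' should be ``at least one''.) As it stands, your proposal establishes the easy cases but leaves the essential step unproved.
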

\begin{proof}

We can assume that $\Gamma$ has no separating edges. Indeed, by \cite[Lemma~2.7]{paganitommasi}, the fine compactified Jacobian of a curve with a separating node is the product of  fine compactified Jacobians of the two curves obtained by normalizing that node.

We may also assume that $\Gamma$ does not contain any loops based at one vertex, as the claim is trivial in that case. Let us assume that $|\Verts(\Gamma)|=n\geq 2$. Because $b_1(\Gamma)=1$, we can imagine $\Gamma$ as a \lq necklace\rq of $n$ vertices each connected to the next by one edge, each carrying its own genus weighting. The only possible connected spanning subgraphs of $\Gamma$ in this case are $\Gamma$ itself, and all graphs obtained by removing any one edge from $\Gamma$. 


The generalized Jacobian $\mathcal{J}^{\mathbf 0}(X)$ of $X$ is a semi-abelian variety which is a $\mathbf G_m$-bundle over the product of the Jacobians of the irreducible components of $X$. As a consequence, the strata $\mathcal {J}^d_{(G,\underline{\mathbf d})}$ of $\Simp^d(X)$ are either open  (when $G=\Gamma$) or closed  (when $G$ is obtained by removing one edge $e$ from $\Gamma$). The open strata are all isomorphic to $\mathcal{J}^{\mathbf 0}(X)$, while the closed ones are isomorphic to the abelian variety given by the product of the degree $0$ Jacobians of the components of $X$. 

Let $\fcj$ be a fine compactified Jacobian of $X$. By openness, there exists some $\underline{\mathbf{d}}$ such that $\fcj$ contains the open stratum $\mathcal{J}^d_{(\Gamma,\underline{\mathbf d)}}$.

We have that $\fcj$ satisfies the valuative criterion for properness; thus  the present situation is  similar to \cite[Section~3]{paganitommasi}.
Consider any morphism $\Delta \setminus \{0 \} \to \mathcal J^d_{(\Gamma,\underline{\mathbf{d}})}$ (where $\Delta$ is the spectrum of a DVR), which is not the restriction of a morphism $\Delta\to \mathcal J^d_{(\Gamma,\underline{\mathbf{d}})}$. 
Then the argument in \emph{loc. cit.} can be adapted to show that $f\colon\Delta \setminus \{0 \} \to \mathcal J^d_{(\Gamma,\underline{\mathbf{d}})}$ extends to a morphism $\Delta \to \Simp^d(X)$ in $n$  different ways, each one of them hitting a different closed stratum $\mathcal J^d_{(\Gamma\setminus\{e\},\underline{\mathbf{d}'})}$ for some $\underline{\mathbf{d}'}$.

This implies that the closure of $\mathcal J^d_{(\Gamma,\underline{\mathbf{d}})}$ inside $\Simp^d(X)$  looks like a compactification of the semi-abelian variety which is made non-separated by the fact that the divisors over both the $0$-section and the $\infty$-section of the extension of the $\mathbf G_m$-bundle $\mathcal{J}^{\mathbf 0}(X)$ to a $\mathbf P^1$-bundle are present in $n$ copies. 

Moreover, the poset associated with the stratification of $\Simp^d(X)$ is isomorphic to the poset associated with the stratification of $\Simp^d(X')$ where $\Gamma(X)$ and $\Gamma(X')$ have the same underlying graph, but possibly a different genus weighting on each vertex. Hence the combinatorial characterization of the neighbourhoods of the points of $\mathcal J^d_{(\Gamma\setminus\{e\},\underline{\mathbf{d}})}$ is the same as in the genus $1$ case.
 This allows us to conclude that the geometrical considerations applied to the description of the fine compactified Jacobians of $X'$ in \cite[Section~3]{paganitommasi} apply in the same way to the fine compactified Jacobians of $X$. This establishes, in particular, that $\sigma_{\fcj}(\Gamma\setminus\{e\})$ is non-empty for every edge $e$.
    \end{proof}

\begin{remark}\label{ex: genus1}
The proof of Lemma~\ref{lemma:genus1} above actually shows that if $X'$ is an arbitrary curve whose dual graph $\Gamma'$ is isomorphic to $\Gamma$, with possibly a different choice of the genera labelling the vertices, then the assignment $\sigma_{\fcj}$ associated with $\fcj$ also describes a fine compactified Jacobian on $X'$. We can deduce from this that the combinatorial description of fine compactified Jacobians of nodal curves of arithmetic genus $1$ given in \cite[Section 3]{paganitommasi} generalizes to fine compactified Jacobians of arbitrary nodal curves with dual graph $\Gamma$ with $b_1(\Gamma)=1$.
 \end{remark}

In preparation of our main results later (Corollaries~\ref{associsstab}, \ref{bijection}), we now prove the following.

\begin{lemma}\label{lemma:support} \footnote{This result is proved independently, and with a different argument, in \cite[Theorem~2.16]{viviani}.}
Let  $\fcj \subset \Simp^d(X)$ be a degree $d$ fine compactified Jacobian. Then $\sigma_{\fcj}(\gammazero) \neq \emptyset$ holds for all  connected spanning subgraphs $\gammazero \subseteq \Gamma$.
\end{lemma}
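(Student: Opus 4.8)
The plan is to prove the statement by induction on the number of edges removed from $\Gamma$, i.e.\ by descending induction on $|\Edges(\gammazero)|$. The base case is $\gammazero = \Gamma$, where we must show $\sigma_{\fcj}(\Gamma) \neq \emptyset$: this is immediate from the openness of $\fcj$ together with the fact that the open strata $\mathcal{J}^d_{(\Gamma,\underline{\mathbf d})}$ are the dense strata of $\Simp^d(X)$ (see Remark~\ref{sec:strat}), so any nonempty open $\fcj$ must meet one of them. For the inductive step, suppose $\gammazero \subsetneq \Gamma$ is a proper connected spanning subgraph, and let $e \in \Edges(\Gamma) \setminus \Edges(\gammazero)$ be an edge whose removal from $\Gamma$ still leaves a connected graph $\Gamma \setminus \{e\} \supseteq \gammazero$. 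The key observation is that $\gammazero$ is a connected spanning subgraph of $\Gamma \setminus \{e\}$, which is the dual graph of the partial normalization $X_e \to X$ of $X$ at the node $e$.

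\textbf{Reduction to a partial normalization.} By Lemma~\ref{pullbackfcj}, applied to the partial normalization $f\colon X_e \to X$ at the single node $e$, the preimage $j_f^{-1}(\fcj)$ decomposes into geometrically connected components, each of which is a degree~$(d-1)$ fine compactified Jacobian of $X_e$, and moreover
\[
\bigcup_{J'} \sigma_{J'}(\gammazero) = \sigma_{\fcj}(\gammazero)
\]
for $\gammazero \subseteq \Gamma(X_e) = \Gamma \setminus \{e\}$. The graph $\Gamma \setminus \{e\}$ has strictly fewer edges than $\Gamma$, so by the inductive hypothesis (applied to each fine compactified Jacobian $J'$ of $X_e$ and to the connected spanning subgraph $\gammazero \subseteq \Gamma \setminus\{e\}$) we would get $\sigma_{J'}(\gammazero) \neq \emptyset$ — \emph{provided} $j_f^{-1}(\fcj)$ is nonempty, i.e.\ provided it has at least one connected component $J'$. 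The remaining task is therefore to show that $j_f^{-1}(\fcj) \neq \emptyset$ for a suitable choice of $e$.

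\textbf{Nonemptiness of the partial normalization pullback.} This is where properness of $\fcj$ enters and is the main obstacle. The idea is as follows. Pick any $(\gammazero_0, \underline{\mathbf d}_0) \in \sigma_{\fcj}$ with $\gammazero_0$ chosen so that $|\Edges(\gammazero_0)|$ is minimal among the subgraphs appearing in $\sigma_{\fcj}$; such a pair exists and $\sigma_{\fcj}$ is nonempty by the base case. If $\gammazero_0 = \Gamma$, then $\fcj \subseteq \Pic^d(X)$; but $\Pic^d(X)$ is not proper when $\Gamma$ has more than one vertex (equivalently when $\NF$ can be nonempty), and $\fcj$ is proper and open, so it would have to be a union of connected components of $\Pic^d(X)$, which are not proper — a contradiction — unless $\Gamma$ is such that $\Pic^d(X) = \Simp^d(X)$ is already proper, in which case $X$ is irreducible and the statement is trivial (there is only one connected spanning subgraph, namely $\Gamma$ with all its loops, and repeated application of Corollary~\ref{cond1} suffices). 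Otherwise $\gammazero_0 \subsetneq \Gamma$, so there is an edge $e \in \Edges(\Gamma) \setminus \Edges(\gammazero_0)$; then the stratum $\mathcal{J}_{(\gammazero_0, \underline{\mathbf d}_0)}$ consists of sheaves failing to be locally free at (among others) $e$, and these lie in the image of $j_f$ for $f\colon X_e \to X$, so $j_f^{-1}(\fcj) \neq \emptyset$. To conclude the induction we must also check that $\Gamma \setminus \{e\}$ is connected; if $e$ were separating then $\gammazero_0 \subseteq \Gamma \setminus \{e\}$ would be disconnected, contradicting that $\gammazero_0$ is a connected spanning subgraph — so $e$ is automatically non-separating. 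Finally, given an \emph{arbitrary} connected spanning subgraph $\gammazero \subseteq \Gamma$ with $\gammazero \neq \Gamma$, we must choose the normalized node to lie outside $\gammazero$: since $\gammazero \subsetneq \Gamma$, pick $e \in \Edges(\Gamma)\setminus\Edges(\gammazero)$; it is non-separating (again because $\gammazero$ is connected spanning and contained in $\Gamma\setminus\{e\}$), and then $j_f^{-1}(\fcj) \neq \emptyset$ follows from the minimality argument applied with this same $e$ after possibly using Corollary~\ref{cond1} to produce an element of $\sigma_{\fcj}$ supported on a subgraph not containing $e$; we then invoke the inductive hypothesis on $X_e$ and Lemma~\ref{pullbackfcj}. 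The delicate point to get right is ensuring that for \emph{every} target $\gammazero$ one can simultaneously arrange (a) a valid edge $e \notin \Edges(\gammazero)$, (b) $j_e^{-1}(\fcj)\neq\emptyset$, and (c) that the induction is genuinely on a smaller graph; handling the interplay of these three conditions — particularly isolating a non-locally-free sheaf in $\fcj$ whose non-locally-free locus avoids $\gammazero$ — is the crux.
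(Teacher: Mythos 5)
Your reduction is sound as far as it goes: using Lemma~\ref{pullbackfcj} to pass to the partial normalization $X_e$ and inducting on the number of edges of the ambient graph is essentially the same reduction the paper makes (``we may assume that $\gammazero$ is obtained from $\Gamma$ by removing a single edge $e$''). But this reduction only moves the difficulty; it does not resolve it. The entire content of the lemma, after your reduction, is the claim that for the \emph{specific} edge $e \in \Edges(\Gamma)\setminus\Edges(\gammazero)$ one has $j_e^{-1}(\fcj)\neq\emptyset$, i.e.\ that $\fcj$ contains a sheaf failing to be locally free at $e$. You correctly flag this as ``the crux,'' but the mechanism you offer for it does not work. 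Your minimality argument produces one pair $(\gammazero_0,\underline{\mathbf d}_0)\in\sigma_{\fcj}$ with $\gammazero_0$ minimal, hence one particular set of edges $\Edges(\Gamma)\setminus\Edges(\gammazero_0)$ at which some sheaf in $\fcj$ is not locally free; but there is no reason the edge $e$ you need (dictated by the target $\gammazero$) lies in that set. And Corollary~\ref{cond1} cannot repair this: it only propagates nonemptiness \emph{upward}, from $\gammazero_0$ to subgraphs containing $\gammazero_0$, so it can never ``produce an element of $\sigma_{\fcj}$ supported on a subgraph not containing $e$'' from one whose subgraph does contain $e$. This is a genuine gap, not a detail.

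For comparison, the paper closes exactly this gap with two inputs your proposal does not use. First, properness gives a spanning tree $T$ with $\sigma_{\fcj}(T)\neq\emptyset$ (Lemma~\ref{Texists}: $\fcj$ is universally closed, hence contains a closed, i.e.\ minimal, stratum). If $T\subseteq\gammazero$ one concludes by Corollary~\ref{cond1}; otherwise $e\in\Edges(T)$, and one performs a tree exchange: choose $e'\in\Edges(\gammazero)\setminus\Edges(T)$ so that $T'=T\cup\{e'\}\setminus\{e\}$ is a spanning tree inside $\gammazero$, and set $\Gamma'=T\cup\{e'\}$, which has $b_1(\Gamma')=1$. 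Second — and this is the essential missing ingredient — the full classification of fine compactified Jacobians of curves whose dual graph has first Betti number $1$ (Lemma~\ref{lemma:genus1}, generalizing the genus-$1$ analysis of \cite{paganitommasi}) shows that the induced fine compactified Jacobian on the partial normalization with dual graph $\Gamma'$ has nonempty assignment on \emph{every} spanning tree of $\Gamma'$, in particular on $T'\subseteq\gammazero$; Equation~\eqref{geomconn} and Corollary~\ref{cond1} then give $\sigma_{\fcj}(\gammazero)\neq\emptyset$. Without an argument of this kind (or the independent one in \cite[Theorem~2.16]{viviani}), your induction cannot get started at the codimension-one step.
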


In the course of the proof, we will use the following
\begin{lemma}\label{Texists}
Let  $\fcj \subset \Simp^d(X)$ be a degree $d$ fine compactified Jacobian. Then there exists a spanning tree $T$ of $\Gamma$ such that $\sigma(T)\neq\emptyset$.
\end{lemma}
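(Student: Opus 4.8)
The plan is to reduce to a situation where $\Gamma$ has a single spanning tree — i.e. where $\Gamma$ is itself a tree — by repeatedly removing edges of $\Gamma$ via partial normalization, using the fact that a fine compactified Jacobian cannot be contained in the Picard scheme of the corresponding curve. First I would invoke Lemma~\ref{pullbackfcj}: for any partial normalization $f\colon X' \to X$ at a collection of nodes, any geometrically connected component $J'$ of $j_f^{-1}(\fcj)$ (when nonempty) is again a fine compactified Jacobian of $X'$, and $\sigma_{J'}(\gammazero) \subseteq \sigma_{\fcj}(\gammazero)$ for $\gammazero \subseteq \Gamma(X')$. So it suffices to find a partial normalization $X'$ with dual graph a spanning tree $T$ of $\Gamma$ such that $j_f^{-1}(\fcj) \neq \emptyset$; then for that $X' = X_T$, the space $\Simp^{d-n(T)}(X_T)$ is proper (its dual graph is a tree, so every torsion-free rank~$1$ simple sheaf has locally free locus a connected spanning subgraph of a tree, hence equals the whole tree — the Picard scheme is already proper), and any connected component $J'$ of $j_f^{-1}(\fcj)$ is open, proper and nonempty in it, hence $\sigma_{J'}(T)\neq\emptyset$, giving $(T,\underline{\mathbf d})\in\sigma_{\fcj}$ for some $\underline{\mathbf d}$.

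The key step is thus the following claim: if $\gammazero$ is a connected spanning subgraph of $\Gamma$ that is \emph{not} a tree, and $\fcj_{\gammazero}$ is a fine compactified Jacobian of the partial normalization $X_{\gammazero}$ of $X$ along $\Edges(\Gamma)\setminus\Edges(\gammazero)$, then there is an edge $e \in \Edges(\gammazero)$ such that the preimage of $\fcj_{\gammazero}$ under the further partial normalization $X_{\gammazero \setminus e} \to X_{\gammazero}$ is nonempty. The argument is the one already used in Example~\ref{vinecurves-nonsm}: if the preimage were empty for \emph{every} edge $e$ of $\gammazero$ that is not a loop or a bridge, then every sheaf in $\fcj_{\gammazero}$ would fail to be locally free at none of the nodes of $X_{\gammazero}$, i.e. $\fcj_{\gammazero}$ would be contained in $\Pic^{d'}(X_{\gammazero})$ for the appropriate degree $d'$; but $\Pic^{d'}(X_{\gammazero})$ is not proper when $X_{\gammazero}$ is reducible and $\gammazero$ has a cycle (equivalently, $n(\gammazero)$ can still be decreased), contradicting properness of $\fcj_{\gammazero}$. (When $\gammazero$ has a cycle but all its non-bridge, non-loop edges have been exhausted one has to be slightly careful: a cycle always contains an edge that is neither a loop nor a bridge of $\gammazero$, so the dichotomy is clean; and contracting bridges only splits $\fcj$ into a product by \cite[Lemma~2.7]{paganitommasi}, so one may assume $\gammazero$ is bridgeless throughout.) Iterating, starting from $\gammazero = \Gamma$, strictly decreases $b_1$ of the current graph at each step and terminates with a spanning tree $T$, together with a chain of nonempty preimages realizing a fine compactified Jacobian $J_T$ of $X_T$; by Lemma~\ref{pullbackfcj} applied along the composite partial normalization, $\sigma_{J_T}(T) \subseteq \sigma_{\fcj}(T)$, and the former is nonempty by properness of $\Simp(X_T)$ as above.

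The main obstacle I expect is the bookkeeping in the iteration: one must check that at each stage the relevant preimage is genuinely a fine compactified Jacobian of the normalized curve (this is exactly Lemma~\ref{pullbackfcj}, so it is available), that connectedness can always be arranged by passing to a connected component, and — most delicately — that ``$\fcj_{\gammazero} \subseteq \Pic(X_{\gammazero})$ contradicts properness'' is applied correctly, namely that $\Pic^{d'}(X_{\gammazero})$ fails to be of finite type (hence fails to be proper) precisely when $X_{\gammazero}$ is reducible, which holds as long as $\gammazero$ has more than one vertex. Since a non-tree connected graph on $>1$ vertex is reducible as a curve, and a spanning tree on $>1$ vertex is also reducible but has proper $\Simp$, the relevant distinction is cycle-vs-no-cycle rather than one-vertex-vs-many, and this is what makes the termination argument work. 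Everything else is a routine combination of openness being stable under base change, the valuative criterion (available for $\Simp$ by \cite{esteves} and inherited by the preimage as in the proof of Lemma~\ref{pullbackfcj}), and the finiteness of $b_1$.
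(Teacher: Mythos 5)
Your strategy --- peel off one non-bridge edge at a time via partial normalization, using Lemma~\ref{pullbackfcj} together with the non-properness of the relevant Picard scheme to guarantee a nonempty preimage at each step --- is workable, but it is a genuinely different and much longer route than the paper's. The paper argues in three lines: by Lemma~\ref{exists-forall}, $\fcj$ is a union of strata of the stratification \eqref{eq: strata}; a stratum of $\fcj$ that is minimal for specialization is closed in $\fcj$, hence universally closed; and the only universally closed strata are those indexed by spanning trees (all others contain a positive-dimensional torus factor coming from the cycles of $\gammazero$). What your approach buys is an explicit chain of partial normalizations down to a spanning tree, which is closer in spirit to Example~\ref{vinecurves-nonsm}; what it costs is the bookkeeping you describe, plus the issue below.

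There is a genuine gap in your treatment of loops (self-nodes of irreducible components). First, the inference ``if $j_e^{-1}(\fcj_{\gammazero})=\emptyset$ for every edge $e$ that is neither a loop nor a bridge, then every sheaf in $\fcj_{\gammazero}$ is locally free everywhere'' is false: a simple sheaf can fail to be locally free at a loop node (normalizing a loop does not disconnect the curve), so the containment $\fcj_{\gammazero}\subseteq\Pic$ does not follow from your hypothesis. Second, the claim ``a cycle always contains an edge that is neither a loop nor a bridge of $\gammazero$'' is false when the cycle is itself a loop; for an irreducible curve with $g$ self-nodes (one vertex, $g$ loops) your iteration has no admissible edge at all, even though $b_1>0$ and $\fcj=\Simp^d(X)\not\subseteq\Pic^d(X)$. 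The repair is simple: drop the exclusion of loops. The correct dichotomy is bridge versus non-bridge: a simple sheaf never fails to be locally free at a separating node, so your contradiction argument (correctly run over \emph{all} non-bridge edges) shows that some sheaf in $\fcj_{\gammazero}$ fails to be locally free at some non-bridge edge $e$, possibly a loop; normalizing there keeps the dual graph connected and spanning, decreases $b_1$ by one, and Lemma~\ref{pullbackfcj} applies verbatim, so the induction terminates at a spanning tree. (A smaller slip: $\Simp^{d-n(T)}(X_T)=\Pic^{d-n(T)}(X_T)$ is not itself proper when $X_T$ is reducible --- it is an infinite disjoint union of proper pieces --- but you only need that any sheaf in the nonempty preimage is locally free with locally free locus all of $T$, hence contributes an element of $\sigma_{\fcj}(T)$, so this does not affect the conclusion.)
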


\begin{proof} By Lemma~\ref{exists-forall}, we have that $\fcj$ is a union of strata of $\Simp^d(X)$. The strata of $\Simp^d(X)$ in \eqref{eq: strata} that are universally closed are the minimal strata, i.e. those that correspond to the case where $\gammazero$ in \eqref{eq: strata} is a spanning tree of $\Gamma$. Because $\fcj$ is universally closed, it must contain at least one minimal stratum of $\Simp^d(X)$. 
\end{proof}

\begin{proof}[Proof of Lemma~\ref{lemma:support}]

Let us recall from Lemma~\ref{Texists} that there exists a spanning tree $T \subseteq \Gamma$ such that $\sigma_{\fcj}(T) \neq \emptyset$.
    If $b_1(\Gamma)=0$, then $\Gamma=T$ and the proof is concluded. If $b_1(\Gamma)=1$, then the result follows from Lemma~\ref{lemma:genus1}. From now on, we assume $b_1(\Gamma) \geq 2$.

Let us assume by contradiction that there exists a connected spanning subgraph $\gammazero$ such that $\sigma_{\fcj}(\gammazero) = \emptyset$. 
Let us choose $\gammazero$ maximal for this property; note that $\gammazero$ is always different from $\Gamma$ as a consequence of the fact that $\fcj$ is open in $\Simp^d(X)$. After possibly replacing $\Gamma$ with a subgraph containing $\gammazero$ and $X$ with the partial normalization corresponding to this subgraph, and by Lemma~\ref{pullbackfcj}, 
we may assume that $\gammazero$ is obtained from $\Gamma$ by removing a single edge $e$.

If $T \subseteq \gammazero$, then a contradiction arises immediately from Corollary~\ref{cond1}. We can therefore assume $T \not\subset \gammazero$, in other words that $e \in \Edges(T) \setminus \Edges(\gammazero)$.

There exists $e' \in \Edges(\gammazero) \setminus \Edges(T)$ such that the graph $T'$, obtained from $T$ by adding the edge $e'$ and removing the edge $e$, is connected (hence a spanning tree) and contained in $\gammazero$. We set $\Gamma'$ to be the graph obtained by adding $e'$ to the edge set of $T$ (or equivalently by adding $e$ to the edge set of $T'$). Then we have that $b_1(\Gamma')=1$.

Let $X' \subset X$ be the subcurve whose dual graph equals $\Gamma'$, let $k:=\left|\Edges(\Gamma) \setminus \Edges(\Gamma')\right|$, and let $j \colon \Simp^{d-k}(X') \to \Simp^{d}(X)$ be the morphism obtained by taking the pushforward along the partial normalization $X' \to X$.  Because $\sigma_{\fcj}(T) \neq \emptyset$, and by Lemma~\ref{pullbackfcj}, there exists some  geometrically connected component $J'$ of $j^*(\fcj)$ that is a degree $d-k$ fine compactified Jacobian of $X'$. 

Because $b_1(\Gamma')=1$, from the last paragraph, we can then conclude by Lemma~\ref{lemma:genus1} that $\sigma(T') \neq \emptyset$ for \emph{all} spanning trees $T' \subseteq \Gamma'$. In particular, there exists a $T' \subseteq \gammazero$ such that $\sigma(T') \neq \emptyset$, thus by Equation~\eqref{geomconn} combined with Corollary~\ref{cond1} we conclude that $\sigma_{\fcj}(\gammazero) \neq \emptyset$, which contradicts the assumption $\sigma_{\fcj}(\gammazero) = \emptyset$ made at the beginning.
\end{proof}

\subsection{Stability assignments of smoothable fine compactified Jacobians}

We now go back to our main line of reasoning, and show that the  assignment associated to  a \emph{smoothable} fine compactified Jacobian also satisfies the second condition of Definition~\ref{finejacstab}.

\begin{proposition} \label{cond2}
	If  $\fcj \subseteq \Simp^d(X)$ is also \emph{smoothable}, then for all spanning subgraphs $\gammazero \subseteq \Gamma$, the subset $\sigma_{\fcj}(\gammazero)\subset S^d_{\Gamma}(\gammazero)$ is a minimal complete set of representatives for the action of the twister group $\tw(\gammazero)$ on $S^d_{\Gamma}(\gammazero)$.
\end{proposition}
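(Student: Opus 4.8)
The plan is to reduce the statement to the combinatorial input of Proposition~\ref{coroll: chipfiring}\eqref{uno} by exploiting the smoothing. First I would fix a regular smoothing $\mathcal{X}/\Delta$ of $X$ witnessing smoothability, so that $\fcj$ is the special fiber of an open, $\Delta$-proper $\fcjs\subseteq\Simp^d(\mathcal{X}/\Delta)$ whose generic fiber is $\Pic^d(\mathcal{X}_\eta/\eta)$. By Corollary~\ref{cond1}, $\sigma_{\fcj}$ already satisfies Part~(1) of Definition~\ref{finejacstab}; by Lemma~\ref{lemma:support}, $\sigma_{\fcj}(G)\neq\emptyset$ for every connected spanning subgraph $G\subseteq\Gamma$, in particular for every spanning tree. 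Hence Proposition~\ref{barmak} applies and gives $|\sigma_{\fcj}(G)|\geq c(G)$ for all spanning subgraphs $G$. So it suffices to prove the \emph{reverse} inequalities, equivalently that $\sigma_{\fcj}(\gammazero)$ contains no two distinct multidegrees in the same $\tw(\gammazero)$-orbit and is a complete set of representatives; by Proposition~\ref{barmak} again it is enough to handle $\gammazero=\Gamma$, i.e.\ to show $|\sigma_{\fcj}(\Gamma)|\leq c(\Gamma)$, and then the ``moreover'' part of Proposition~\ref{barmak} upgrades this to all $\gammazero$. Then Proposition~\ref{coroll: chipfiring}\eqref{uno} (with $m\equiv 1$) reduces everything to showing that the lift $\widetilde{\sigma}\subset S^d(\widetilde\Gamma)$ is a minimal complete set of representatives for $\tw(\widetilde\Gamma)$, where $\widetilde\Gamma$ is the edge-subdivision of $\Gamma$.

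The geometric content enters exactly here. The regular smoothing $\mathcal{X}/\Delta$ has total space regular, so by Remark~\ref{twister} the restriction map identifies $\tw(\Gamma)$ with the group $T$ of multidegrees of Picard elements on $\mathcal{X}$ trivial on the generic fiber, and the uniqueness of limits inside the $\Delta$-proper $\fcjs$ forces $\sigma_{\fcj}(\Gamma)$ to be a complete set of representatives for $\tw(\Gamma)$ already — wait, this is precisely what we want, but for $\gammazero=\Gamma$ the argument is cleanest. For general $\gammazero$, a sheaf with $\gammazero(F)=\gammazero$ is not a line bundle on $X$, so one passes (as in the proof of Lemma~\ref{properoverdelta}) to the resolution $g\colon\widetilde{\mathcal X}\to\mathcal X_\Delta$; here, since the smoothing is already regular, no blow-up is needed and $\widetilde\Gamma=\Gamma$, but the point of invoking Proposition~\ref{coroll: chipfiring}\eqref{uno} is to run the abstract combinatorial criterion rather than redo the valuative analysis. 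Concretely: take any line bundle $L_\eta\in\Pic^d(\mathcal X_\eta)$; it extends uniquely inside $\fcjs$ to a sheaf $[F]\in\fcj$ (existence from Esteves' valuative criterion applied to $\Simp^d$, then openness and $\Delta$-properness of $\fcjs$ pin down a unique limit in $\fcj$), and conversely every $[F]\in\fcj$ arises this way from a line bundle on the generic fiber (density of $\Pic^d(\mathcal X_\eta)$ in $\fcjs$ plus Lemma~\ref{exists-forall}). Composing with Remark~\ref{twister}, two elements of $\sigma_{\fcj}(\Gamma)$ in the same $\tw(\Gamma)=T$-orbit would be two distinct limits of the \emph{same} generic line bundle (after twisting by an element of $T$, which is the restriction of a line bundle on $\mathcal X$ trivial on $\mathcal X_\eta$), contradicting separatedness of limits in the $\Delta$-proper $\fcjs$. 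This gives $|\sigma_{\fcj}(\Gamma)|\leq c(\Gamma)$.

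So the backbone is: (i) Corollary~\ref{cond1} $+$ Lemma~\ref{lemma:support} $+$ Proposition~\ref{barmak} give the lower bounds and reduce to $|\sigma_{\fcj}(\Gamma)|\leq c(\Gamma)$; (ii) smoothability $+$ Remark~\ref{twister} $+$ $\Delta$-properness of the extension give that distinct elements of $\sigma_{\fcj}(\Gamma)$ lie in distinct $\tw(\Gamma)$-orbits, hence $|\sigma_{\fcj}(\Gamma)|\leq |J^0(\Gamma)|=c(\Gamma)$; (iii) the ``moreover'' of Proposition~\ref{barmak} propagates the equality $|\sigma_{\fcj}(\gammazero)|=c(\gammazero)$ to every spanning subgraph; (iv) Proposition~\ref{coroll: chipfiring}\eqref{uno} (or, equivalently, the cited \cite[Theorem~1.20]{viviani}) then certifies that each $\sigma_{\fcj}(\gammazero)$ is an actual \emph{minimal complete set of representatives}, not merely a set of the right cardinality, completing the proof.

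The main obstacle I anticipate is step (ii) for $\gammazero\neq\Gamma$: one has to be careful that the ``unique limit in $\fcjs$'' argument applies to sheaves that are genuinely non-locally-free, where the identification with $\tw$ via Remark~\ref{twister} does not literally apply on $X$ but must be transported through a partial normalization (so that $\gammazero$ becomes the full dual graph) or through the resolution of the total space as in Lemma~\ref{properoverdelta}. Packaging this cleanly is exactly why the proof should route through Proposition~\ref{coroll: chipfiring} rather than attempt a direct orbit-counting argument on each $\gammazero$ separately; the subtle point is checking the cardinality hypothesis $|\sigma(G)|\geq c(G)$ of Proposition~\ref{coroll: chipfiring}\eqref{uno}, which is supplied by Proposition~\ref{barmak} together with Lemma~\ref{lemma:support}, and matching the lift $\widetilde\sigma$ with the limits of line bundles on the regular model.
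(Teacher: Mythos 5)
Your proposal is correct in substance but follows a genuinely different route from the paper's, and it contains one confused aside that would become a real gap if taken literally. The paper reduces, via Proposition~\ref{coroll: chipfiring}\eqref{uno}, to showing that the \emph{lift} $\widetilde{\sigma}_{\fcj}$ to the once-subdivided graph $\tGamma$ is a minimal complete set of representatives for $\tw(\tGamma)$, and proves this geometrically: it performs a degree-$2$ base change of the regular smoothing (creating an $A_1$ singularity at every node) and blows up, so that $\tGamma$ is realized as the dual graph of the central fibre of a regular model $\widetilde{\mathcal{X}}$; completeness and minimality of $\widetilde{\sigma}_{\fcj}$ are then extracted from universal closedness and separatedness of the base-changed family together with Remark~\ref{twister} applied to $\widetilde{\mathcal{X}}$. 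You instead prove only the inequality $|\sigma_{\fcj}(\Gamma)|\leq c(\Gamma)$ geometrically (two distinct line-bundle multidegrees in $\fcj$ lying in one $\tw(\Gamma)$-orbit would give two distinct limits of the same generic line bundle inside the $\Delta$-proper open subscheme extending $\fcj$, via Remark~\ref{twister} on the original regular smoothing and Lemma~\ref{exists-forall}), and you then let Proposition~\ref{barmak} and the implication (c)$\implies$(b) of \cite[Theorem~1.20]{viviani} do all the remaining work. This is leaner --- no base change, no blow-up, no analysis of the non-locally-free strata --- at the price of outsourcing strictly more to the cited combinatorial theorem; both routes are valid.

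Two corrections to your write-up. First, the claim that ``since the smoothing is already regular, no blow-up is needed and $\tGamma=\Gamma$'' is false: in Proposition~\ref{coroll: chipfiring}\eqref{uno} the graph $\tGamma=\tGamma_m$ with $m\equiv 1$ is \emph{always} the subdivision of $\Gamma$ with one extra vertex on each edge, regardless of any smoothing; realizing this subdivided graph as the central fibre of a regular model is precisely why the paper passes to the degree-$2$ base change. Second, Proposition~\ref{coroll: chipfiring}\eqref{uno} and \cite[Theorem~1.20]{viviani} are not interchangeable in your step (iv): the former has as a hypothesis that $\widetilde{\sigma}$ is a minimal complete set of representatives for $\tw(\tGamma)$, which you never verify, whereas the latter (combined with Corollary~\ref{cond1}, Lemma~\ref{lemma:support} and the ``moreover'' of Proposition~\ref{barmak}) applies directly once you know $|\sigma_{\fcj}(\gammazero)|=c(\gammazero)$ for every spanning subgraph $\gammazero$. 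Your argument closes only under the second reading, so you should cite Viviani's theorem there, not Proposition~\ref{coroll: chipfiring}.
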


\begin{proof}

We first fix some notation for our proof. Let $\mathcal{X}/\Delta$ be a regular smoothing of $X$ such that there exists an open $\fcjs_{\mathcal{X}/\Delta} \subseteq \Simp^d(\mathcal{X}/\Delta)$  and $\Delta$-proper subscheme, whose special fiber equals $\fcj$. Then consider the degree~$2$ base change $\Delta \to \Delta$ of $\mathcal{X}/\Delta$, giving a nonregular smoothing $\mathcal{X}'/ \Delta$ with $A_1$ singularities at all nodes of the special fiber $X$. By functoriality of the Picard functor, and by stability under base change of openness and properness, we have that the base change $\fcjs'_{\mathcal{X}'/\Delta}$ is also an open and $\Delta$-proper subscheme of $\Simp^d(\mathcal{X}'/\Delta)$. Finally, let  $f \colon \widetilde{\mathcal{X}} \to \mathcal{X}' $  be the blow up  at all singularities. The family $\widetilde{\mathcal{X}}$ is then a regular smoothing of the special fiber $\widetilde{X}$, the curve obtained from $X$ by replacing each node with an irreducible rational bridge.

We are now ready for the proof. First  observe that, by Corollary~\ref{cond1} and by Lemma~\ref{lemma:support} combined with Proposition~\ref{barmak}, the set $\sigma_{\overline{J}}$ satisfies the  hypothesis of Proposition~\ref{coroll: chipfiring} Part~\eqref{uno}. By applying \emph{loc. cit.}, it is enough to prove that the collection $\widetilde{\sigma}_{\fcj}$ obtained by lifting every element of $\sigma_{\fcj}$ via Equation~\ref{lift} is a minimal complete set of representatives for the action of $\tw({\Gamma(\widetilde{X}}))$.

Let ${\bf \vec{d}}$ be the lift via \eqref{lift} of some ${\mathbf d} \in S^d_\Gamma(\gammazero)$ for some $\gammazero \subseteq \Gamma$. There exists $[L] \in \pic^d(\widetilde{X})$ such that $\underline{\deg}(f_*(L)) \in S^d_{\Gamma}(\gammazero)$. By Hensel's lemma, $L$ extends to a family $[\mathcal{L}] \in \Pic^d(\widetilde{\mathcal{X}}/\Delta)$. Since $\overline{J}_{\mathcal{X}'/\Delta}$ is universally closed over $\Delta$,  there exists $\mathcal{F}'\in \Simp^d(\mathcal{X}'/\Delta)$ such that $\mathcal{F}'|_{\mathcal{X}'_\eta}= f_* \mathcal{L}_{\mathcal{X}'_\eta}$ and $[\mathcal{F}'|_X]  \in \fcj_{X'}$. We take $[\mathcal{L}'] \in \Pic^d(\widetilde{\mathcal{X}}/\Delta)$ such that $f_*(\mathcal{L}')= \mathcal{F}'$. Thus we have that $\left(\mathcal{L}'\otimes \mathcal{L}^{-1}\right)_{\widetilde{\mathcal{X}}_{\eta}}=\mathcal{O}_{\mathcal{X}_{\eta}}$. We let then ${\bf \vec{t}}= \underline{\deg}(\mathcal{L}'\otimes \mathcal{L}^{-1}|_{\widetilde{X}})$, and so ${\bf\vec{d}}+{\bf\vec{t}}= \underline{\deg} (\mathcal{L}'|_{\widetilde{X}})$ is in $\widetilde{\sigma}_{\fcj}$. This proves that $\widetilde{\sigma}_{\fcj}$ is a complete set of representatives.

To prove minimality, assume that there exist lifts ${\bf \vec{d}_1}, {\bf \vec{d}_2} \in \widetilde{\sigma}_{\fcj}$  such that ${\bf \vec{d}_2}={\bf \vec{d}_1}+ {\bf \vec{t}}$ for some ${\bf \vec{t}} \in \tw(\Gamma(\widetilde{X}))$.
By Hensel's lemma and by Lemma~\ref{exists-forall}, there exist $[\mathcal{L}_1], [\mathcal{L}_2] \in \pic^d(\widetilde{\mathcal{X}}/\Delta)$, coinciding on the generic fiber, and whose multidegrees on $\widetilde{X}$ equal ${\bf \vec{d}_1}, {\bf \vec{d}_2}$. The pushforwards $f_*(\mathcal{L}_1)$ and $f_*(\mathcal{L}_2)$ coincide on  $\mathcal{X}'_{\eta}$, and because $\overline{J}_{\mathcal{X}'/\Delta}$ is separated over $\Delta$, their central fibers must coincide, which implies that ${\bf \vec{t}}=0$. This proves minimality.
\end{proof}

\begin{remark}
    \label{neron}  In \cite{caporasoneron} Caporaso introduced the notion of ``being of N\'eron type'' for a degree~$d$ compactified Jacobian of a nodal curve $X$. In our notation, this can be restated 
    as the property that $\sigma_{\fcj}(\Gamma(X))$ is a minimal complete set of representatives for the action of $\tw(\Gamma(X))$ on $S^d_{\Gamma(X)}(\Gamma(X))$. 
    
    Proposition~\ref{cond2} proves, in particular, that all smoothable fine compactified Jacobians are of N\'eron type. Similar results were obtained  in \cite{kass} and \cite{meloviv} for fine compactified Jacobians obtained from some numerical polarizations (see Section~\ref{Sec: OS} for the notion of a numerical polarization). 
    
    We refer the reader to \cite{kass} for the definition of a N\'eron model and its relations with  compactified Jacobians.
\end{remark}

\begin{remark}
The smoothability assumption
in the statement of Proposition~\ref{cond2} is crucial. In \cite{paganitommasi} the authors give an example, for $X$ a nodal curve of genus $1$, of degree $0$ fine compactified Jacobians $\fcj \subset \Simp^0(X)$ whose collection of line bundle multidegrees contain an arbitrary number $r\geq 2$ of elements for each orbit of the action of $\tw(\Gamma)$ on $S^0_{\Gamma}(\Gamma)$. Such compactified Jacobians can always be extended to a universally closed family over a regular smoothing, but such extensions are \emph{separated}  if and only if $r$ equals $1$.  

\end{remark}

We conclude with the main result:

\begin{corollary} \label{associsstab}
    The associated assignment (Definition~\ref{assocassign}) to a smoothable degree~$d$ fine compactified Jacobian of $X$ is a degree~$d$ stability assignment for the dual graph $\Gamma(X)$ (as in Definition~\ref{finejacstab}).
\end{corollary}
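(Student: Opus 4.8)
The plan is to verify directly that the associated assignment $\sigma_{\fcj}$ satisfies the two requirements of Definition~\ref{finejacstab}. Since all the substantial work has already been carried out in the preceding results, the proof will be an assembly: Corollary~\ref{cond1} supplies Part~(1), Proposition~\ref{cond2} supplies Part~(2), and what remains is the purely bookkeeping observation that $\sigma_{\fcj}$ is a degree~$d$ assignment in the sense of Definition~\ref{def: assignment}.

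First I would check the bookkeeping. For every $[F]\in\fcj$ the sheaf $F$ is simple, so $X\setminus\NF(F)$ is connected; hence $\gammazero(F)=\Gamma\setminus\NF(F)$ is a connected spanning subgraph of $\Gamma$ (it retains all vertices and discards only the edges of $\NF(F)$). Moreover, combining the identity $\deg_X(F)=\sum_{v}\underline{\deg}(F)(v)+\delta(F)$ from Section~\ref{notation} with $\delta(F)=\#\bigl(\Edges(\Gamma)\setminus\Edges(\gammazero(F))\bigr)=n(\gammazero(F))$ yields $\underline{\deg}(F)\in S^d_{\Gamma}(\gammazero(F))$. Thus $\sigma_{\fcj}$ is indeed a subset of $\{\text{connected spanning subgraphs of }\Gamma\}\times\Z^{\Verts(\Gamma)}$ of the shape prescribed in Definition~\ref{def: assignment}.

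Next I would invoke the two axioms. Part~(1) of Definition~\ref{finejacstab} (the chip-adding axiom) holds for $\sigma_{\fcj}$: this is precisely the statement of Corollary~\ref{cond1}, which was deduced from Lemma~\ref{impliedbyopen} and uses only that $\fcj$ is open in $\Simp^d(X)$. Part~(2) of Definition~\ref{finejacstab}, namely that for every connected spanning subgraph $\gammazero\subseteq\Gamma$ the set $\sigma_{\fcj}(\gammazero)$ is a minimal complete set of representatives for the action of $\tw(\gammazero)$ on $S^d_{\Gamma}(\gammazero)$, is exactly Proposition~\ref{cond2}; this is the only place where the smoothability hypothesis enters. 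With both axioms verified, $\sigma_{\fcj}$ is by Definition~\ref{finejacstab} a degree~$d$ stability assignment for $\Gamma(X)$, which is the assertion.

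The corollary itself presents no real obstacle; the difficulty is concentrated entirely in Proposition~\ref{cond2}, whose proof passes through a regular smoothing of $X$, the quadratic base change producing $A_1$-singularities on the total space, the resolution $\widetilde{\mathcal X}$ obtained by blowing these up, and the chip-firing comparison of Proposition~\ref{coroll: chipfiring}, which in turn rests on Lemma~\ref{lemma:support} and Proposition~\ref{barmak}. So the hard part is done upstream, and the present statement is a formal consequence.
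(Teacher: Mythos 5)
Your proof is correct and is essentially identical to the paper's, which also obtains the corollary as a direct combination of Corollary~\ref{cond1} (for Part~(1)) and Proposition~\ref{cond2} (for Part~(2)); the bookkeeping you spell out (connectedness of $\gammazero(F)$ via simplicity and $\underline{\deg}(F)\in S^d_\Gamma(\gammazero(F))$ via the degree formula) is already recorded in the paper's Notation section and is correctly handled.
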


\begin{proof}
This is obtained as a combination of Corollary~\ref{cond1} and  Proposition~\ref{cond2}.
    \end{proof}

We conclude by observing that the same result holds for families. 
\begin{proposition} \label{compcontract} Let $\mathcal{X}/S$ be a family of nodal curves over a base scheme (or Deligne--Mumford stack) $S$, and let $\fcjs=\fcjs_{\mathcal{X}/S}$ be a degree~$d$ fine compactified Jacobian for the family. Assume that for each geometric point $s$ of $S$ the fine compactified Jacobian $\fcj_s$ of the fiber $X_s$ over $s \in S$ is \emph{smoothable}. 

Then the collection $\Set{\sigma_{\fcj_s}}$ of  associated assignments of $\fcj_s$ for all geometric points $s$ of $S$ is  a family of degree~$d$ stability assignments (as in Definition~\ref{familyfinejacstab}). 
\end{proposition}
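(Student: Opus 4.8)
The plan is to show that the collection $\Set{\sigma_{\fcj_s}}$ satisfies both the ``per-fiber'' requirement and the ``compatibility under \'etale specialization'' requirement of Definition~\ref{familyfinejacstab}. The first of these is immediate: for each geometric point $s$ of $S$, the fiber $\fcj_s$ of $\fcjs$ over $s$ is an open subscheme of $\Simp^d(\mathcal{X}_s)$ that is proper over the residue field (openness and properness being stable under base change), hence a degree~$d$ fine compactified Jacobian of $X_s$ in the sense of Definition~\ref{def:finecompjac}; by hypothesis it is smoothable, so Corollary~\ref{associsstab} applies and $\sigma_{\fcj_s}$ is a degree~$d$ stability assignment on $\Gamma(X_s)$. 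So the only real content is the compatibility.

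Next I would fix an \'etale specialization $t \rightsquigarrow s$ on $S$, with induced graph morphism $f\colon \Gamma(X_s) \to \Gamma(X_t)$, and a pair $(\gammazero, \underline{\mathbf d}) \in \sigma_{\fcj_s}$; I must produce an element $[F_s] \in \fcj_s$ realizing this pair and show that the corresponding pair $(\gammazero', \underline{\mathbf d}')$ of Definition~\ref{combo-family} lies in $\sigma_{\fcj_t}$. The idea is to degenerate within the family $\fcjs$: the specialization $t \rightsquigarrow s$ is realized by a map $\Delta \to S$ from the spectrum of a DVR sending the generic point $\eta$ to (a geometric point over) $t$ and the closed point $0$ to $s$, and after pulling back we get a family of nodal curves $\mathcal{X}_\Delta/\Delta$ together with the $\Delta$-proper open subscheme $\fcjs_\Delta = \fcjs \times_S \Delta \subseteq \Simp^d(\mathcal{X}_\Delta/\Delta)$, whose special fiber is $\fcj_s$ and whose generic fiber is contained in $\fcj_t$ (base-changed to the residue field at $\eta$). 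Choosing any $[F_s] \in \fcj_s$ with $\gammazero(F_s) = \gammazero$ and $\underline{\deg}(F_s) = \underline{\mathbf d}$ (possible by the definition of $\sigma_{\fcj_s}$), I extend it to a section of $\Simp^d(\mathcal{X}_\Delta/\Delta)$ whose special fiber is $[F_s]$: this can be arranged, for instance by first deforming $F_s$ to a line bundle on $X_s$ — using the description in Remark~\ref{sec:strat} and Lemma~\ref{impliedbyopen} that $\gammazero$-perturbations of $\underline{\mathbf d}$ are multidegrees of line bundles specializing to such an $F_s$ — picking a line bundle on $\mathcal{X}_\Delta$ restricting to it on the generic fiber (which exists since $\Pic^d(\mathcal{X}_\Delta/\Delta)\to\Delta$ is smooth, via Hensel/formal smoothness), and then using properness of $\fcjs_\Delta$ to obtain a section of $\fcjs_\Delta$ whose special fiber lies in $\fcj_s$; by Lemma~\ref{exists-forall} applied to $\fcj_s$, after a twist this special fiber can be taken to be $[F_s]$ itself since $[F_s]$ shares its stratum with that special fiber. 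The generic fiber $[F_t]$ of this section lies in the generic fiber of $\fcjs_\Delta$, hence in $\fcj_t$.

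Now I apply Lemma~\ref{impliedbyopen} to the resulting section $\sigma\colon \Delta \to \Simp^d(\mathcal{X}_\Delta/\Delta)$: with $F_0 = F_s$, $F_{\overline\eta} = F_t$, $\gammazero = \gammazero(F_s)$, $\gammazero' = \gammazero(F_t)$, and the graph morphism $f$, it yields a $\gammazero\subseteq f^{-1}(\gammazero')$-perturbation $T$ with $\underline{\deg}(F_t) = f(\underline{\deg}(F_s) + T)$. Reading off the definitions, $\gammazero'$ is the image of $\gammazero$ under $f$ and the formula $\underline{\deg}(F_t) = f(\underline{\mathbf d} + T)$ unwinds — since $T$ assigns a single chip to one endpoint (on $\Gamma(X_s)$) of each edge of $\Gamma(X_s)\setminus\gammazero$, and applying $f$ sums multidegrees over fibers while the chips on the edges contracted by $f$ get counted into the target vertex — exactly to Equation~\eqref{comp-contractions} defining $\underline{\mathbf d}'$. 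Hence $(\gammazero', \underline{\mathbf d}') = (\gammazero(F_t), \underline{\deg}(F_t)) \in \sigma_{\fcj_t}$, which is the required $f$-compatibility. This completes the verification of Definition~\ref{familyfinejacstab}.

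I expect the main obstacle to be the bookkeeping in the middle paragraph: making precise that there \emph{is} a section of $\fcjs_\Delta/\Delta$ whose special fiber is the prescribed $[F_s]$ (not merely some sheaf in the same stratum), which requires combining the existence part of the valuative criterion for $\Simp^d$, Lemma~\ref{exists-forall} to move within a stratum of $\fcj_s$ by a twist on the central fiber that extends over $\Delta$, and the smoothness of the relative Picard scheme to get a starting line-bundle lift; and then checking that Lemma~\ref{impliedbyopen}'s conclusion $\underline{\mathbf d}' = f(\underline{\mathbf d} + T)$ literally matches Equation~\eqref{comp-contractions}, which is a purely combinatorial identity about how $f$ acts on a perturbation supported on one endpoint of each non-$\gammazero$ edge. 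Everything else — the per-fiber statement, openness/properness stability under base change, and the reduction of \'etale specializations to maps out of a DVR — is formal.
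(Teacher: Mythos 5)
Your overall strategy --- reduce to a trait $\Delta\to S$ realizing the \'etale specialization, produce a section of $\fcjs_\Delta$ through a sheaf $F_s$ realizing the pair $(\gammazero,\underline{\mathbf d})$, and read off the relation between the invariants of the two fibers --- is reasonable and close in spirit to the paper's argument (which, rather than routing through Lemma~\ref{impliedbyopen}, computes the degree relation directly from flatness and continuity of the Euler characteristic on subcurves, obtaining Equation~\eqref{condition} and matching it with Equation~\eqref{comp-contractions}). However, your middle paragraph has a genuine gap. The proposed construction of a section of $\fcjs_\Delta$ with special fiber $[F_s]$ does not work: you deform $F_s$ to a line bundle $L_s$ on $X_s$, lift $L_s$ to a line bundle $\mathcal{L}$ on $\mathcal{X}_\Delta$, and then extend $\mathcal{L}_\eta$ back over $\Delta$ using properness of $\fcjs_\Delta$. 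But $\fcjs_\Delta$ is also \emph{separated} over $\Delta$, so that extension is unique and is $\mathcal{L}$ itself; its special fiber is $L_s$, which lies in the open stratum indexed by $(\Gamma(X_s),\underline{\mathbf d}+T_0)$ for some $\gammazero$-perturbation $T_0$, not in the stratum $(\gammazero,\underline{\mathbf d})$ of $F_s$ whenever $\gammazero\neq\Gamma(X_s)$. The assertion that ``$[F_s]$ shares its stratum with that special fiber'' is therefore false, and Lemma~\ref{exists-forall} cannot repair it. What is needed is a genuine generization of $F_s$ in the $\Delta$-direction, e.g.\ by partially normalizing $\mathcal{X}_\Delta$ in a family along the non-contracted (persisting) nodes of $\NF(F_s)$, resolving the singularities of the total space at the contracted ones, and pushing forward a suitable line bundle --- not a detour through the special fiber followed by the valuative criterion.

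There is a second, related gap: Definition~\ref{combo-family} demands membership in $\sigma_{\fcj_t}$ of the \emph{specific} pair $(f(\gammazero),\underline{\mathbf d}')$, with $f(\gammazero)$ the exact image of $\gammazero$, whereas Lemma~\ref{impliedbyopen} only guarantees $\gammazero\subseteq f^{-1}(\gammazero(F_t))$. The generic fiber of an arbitrary section through $F_s$ may perfectly well be locally free at a persisting node where $F_s$ is not --- this is exactly how boundary strata of $\Simp^d$ lie in the closure of the open stratum --- and in that case the perturbation $T$ is supported on non-contracted edges as well, its pushforward under $f$ depends on the chosen orientation, and the resulting pair is not the one prescribed by Equation~\eqref{comp-contractions}. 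So you must arrange the section so that $\NF(F_t)$ is precisely the image of the persisting part of $\NF(F_s)$; only then does your final combinatorial unwinding (which is correct under that hypothesis) apply. The per-fiber statement via Corollary~\ref{associsstab} and the reduction of \'etale specializations to maps out of a DVR are, as you say, formal and agree with the paper.
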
\begin{proof}
the case where $S$ is a single geometric point is Corollary~\ref{associsstab}.  

To complete our proof we will show that the assignment of $\sigma_{\fcj_s}$ is compatible with all morphisms  $f \colon \Gamma(\mathcal{X}_{s}) \to \Gamma(\mathcal{X}_t)$  arising from \'etale specializations $t \rightsquigarrow s$. 

Assume that $F_{t}$ is a simple sheaf on the nodal curve $\mathcal{X}_{t}$ that specializes to $F_s$ on $\mathcal{X}_s$.  Suppose that the subcurve $\mathcal{X}_{t, 0} \subseteq \mathcal{X}_{t}$ generalizes the subcurve $\mathcal{X}_{s, 0} \subseteq \mathcal{X}_s$. By flatness and by continuity of the Euler characteristic, and because we are passing to the torsion-free quotients, the degrees are related by
\begin{equation} \label{condition}
	 \deg_{\mathcal{X}_{t, 0}}(F_{t})= 	\deg_{\mathcal{X}_{s, 0}}(F_{s}) +  n(F_s,f)
		\end{equation}
		where $n(F_s,f)$ is the number of nodes of $\mathcal{X}_{s,0}$ that are smoothened in $t$ and where $F_s$ fails to be locally free. (See Equation~\eqref{deg:subcurve}).  Formula~\eqref{condition} is precisely the condition of Equation~\eqref{comp-contractions}.
\end{proof}

\begin{remark} \label{autdual}
    Assume that the dual graph of the fibers of $\mathcal{X}/S$ 
    is constant in $S$, and let $S$ be irreducible with $\eta \in S$ its generic point. Then the family $\mathcal X/S$ induces a group homomorphism from the group of \'etale specializations of $\eta$ to itself, 
      which equals the Galois group $\operatorname{Gal}(k(\eta)^{\operatorname{sep}}, k(\eta)))$, to the automorphism group $\operatorname{Aut}(\Gamma(X_{k(\eta)^{\operatorname{sep}}}))$ of the dual graph of the generic fiber. Let $G$ be the image of this group homomorphism. In this case, Proposition~\ref{compcontract} implies that the associated assignment $\sigma_{\fcj_{k(\eta)}}$, which is defined as a collection of discrete data on the dual graph $\Gamma(X_{k(\eta)^{\operatorname{sep}}})=\Gamma(X_{\overline{k(\eta)}})$, is  invariant under the action of $G$.

In the particular case where $S$ is a stratum of $\overline{\mathcal{M}}_{g,n}$, i.e. $S$ is the Deligne--Mumford moduli stack of curves whose dual graph is isomorphic to a fixed graph $\Gamma \in G_{g,n}$, we have $G=\operatorname{Aut}(\Gamma)$.
\end{remark}

By combining Theorem~\ref{cor: fromstabtojac}/Corollary~\ref{stab->fine} with Proposition~\ref{compcontract} and Lemma~\ref{exists-forall}, we immediately obtain that the two operations of (1) taking the associated assignment to a fine compactified Jacobian, and (2) constructing the moduli space of stable sheaves associated with a given stability assignment, are inverses of each  other.
\begin{corollary} \label{bijection}
Let $\mathcal{X}/S$ be a family of nodal curves over an irreducible scheme (or Deligne--Mumford stack)  $S$, and assume that $\mathcal{X}_{\theta}/\theta$ is smooth for $\theta$ the generic point of $S$.

If $\fcjs \subseteq \Simp^d(\mathcal{X}/S)$ is a degree~$d$ fine compactified Jacobian and $\sigma_{\fcjs}$ is its associated assignment (Definition~\ref{assocassign}), then the moduli space  $\overline{\mathcal{J}}_{\sigma_{\fcjs}}$ of $\sigma_{\fcjs}$-stable sheaves (Definitions~\ref{defstable} and~\ref{defstablefamily}) equals $\fcjs$.

 Conversely, if $\tau$ is a family of degree~$d$ stability assignments, and $\fcjs_\tau$ is the moduli space of $\tau$-stable sheaves, then the associated assignment $\sigma_{\fcjs_\tau}$ equals $\tau$.
\end{corollary}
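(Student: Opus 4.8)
The plan is to assemble Corollary~\ref{bijection} from the two directions already established, checking that the stability-assignment formalism and the moduli-of-stable-sheaves construction are mutually inverse, first fiberwise and then in families.

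First I would prove that $\overline{\mathcal J}_{\sigma_{\fcjs}} = \fcjs$. By Corollary~\ref{associsstab} (applied fiberwise) together with Proposition~\ref{compcontract}, the collection $\sigma_{\fcjs} = \{\sigma_{\fcj_s}\}_{s\in S}$ is indeed a family of degree~$d$ stability assignments, so the right-hand side makes sense and, by Theorem~\ref{cor: fromstabtojac}, is itself a degree~$d$ fine compactified Jacobian for $\mathcal X/S$. Both $\fcjs$ and $\overline{\mathcal J}_{\sigma_{\fcjs}}$ are open subschemes of $\Simp^d(\mathcal X/S)$, both proper over $S$, and both are unions of strata of the stratification~\eqref{eq: strata} fiberwise: for $\fcjs$ this is Lemma~\ref{exists-forall}, and for $\overline{\mathcal J}_{\sigma_{\fcjs}}$ it is immediate from Definition~\ref{defstable}. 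A sheaf $[F]$ over a geometric point $s$ lies in $\overline{\mathcal J}_{\sigma_{\fcjs}}$ precisely when $(\gammazero(F),\underline{\deg}(F)) \in \sigma_{\fcj_s}$, i.e. precisely when some sheaf with the same discrete invariants lies in $\fcj_s$, which by Lemma~\ref{exists-forall} happens precisely when $[F]$ itself lies in $\fcj_s$. Since both subschemes are open and are determined by which strata they contain on each fiber, they agree as subschemes of $\Simp^d(\mathcal X/S)$.

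Then I would prove the converse, that $\sigma_{\fcjs_\tau} = \tau$. By Theorem~\ref{cor: fromstabtojac}, $\fcjs_\tau$ is a degree~$d$ fine compactified Jacobian, so its associated assignment $\sigma_{\fcjs_\tau}$ is defined (and is a family of stability assignments by Proposition~\ref{compcontract}). Fix a geometric point $s$; by Definition~\ref{defstable} the fiber $(\fcjs_\tau)_s$ consists exactly of those $[F]$ with $(\gammazero(F),\underline{\deg}(F)) \in \tau_s$, and conversely, for each pair $(\gammazero, \underline{\mathbf d}) \in \tau_s$ the relevant stratum $\mathcal J_{(\gammazero,\underline{\mathbf d})}$ of $\Simp^d(X_s)$ is nonempty (it always is, since $\gammazero$ is connected and spanning so the corresponding sheaves are simple) and is contained in $(\fcjs_\tau)_s$. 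Reading off $\sigma_{\fcjs_\tau}$ via Definition~\ref{assocassign} therefore recovers exactly the set of pairs $(\gammazero,\underline{\deg}(F))$ with $[F]\in(\fcjs_\tau)_s$, which is $\tau_s$. As this holds for all geometric points, $\sigma_{\fcjs_\tau} = \tau$ as families of stability assignments.

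The main obstacle, such as it is, is bookkeeping rather than a new idea: one must be careful that an assignment really is determined by the set of strata it contains (this uses Lemma~\ref{exists-forall} crucially, to pass from ``$\fcj$ meets the stratum'' to ``$\fcj$ contains the stratum''), and that openness plus being a union of strata plus agreement on strata forces equality of subschemes — here the point is that $\Pic^d(\mathcal X_\theta/\theta)$ is dense and the scheme structure on each stratum is the reduced induced one coming from $\Simp^d$, so no non-reduced subtleties arise. Everything else is a direct invocation of Theorem~\ref{cor: fromstabtojac}, Corollary~\ref{associsstab}, Proposition~\ref{compcontract} and Lemma~\ref{exists-forall}, exactly as flagged in the sentence preceding the statement.
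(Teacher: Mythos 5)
Your proposal is correct and follows exactly the route the paper intends: the paper gives no written proof of Corollary~\ref{bijection} beyond the sentence preceding it, which says the result is obtained "by combining Theorem~\ref{cor: fromstabtojac}/Corollary~\ref{stab->fine} with Proposition~\ref{compcontract} and Lemma~\ref{exists-forall}," and your write-up is precisely the fleshed-out version of that combination (with Lemma~\ref{exists-forall} doing the key work of upgrading "the assignment contains the pair" to "the Jacobian contains the whole stratum"). The only inessential flourish is the remark about reduced induced structures; since both sides are open subschemes of $\Simp^d(\mathcal{X}/S)$ with the same underlying points, equality is automatic.
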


\section{Numerical polarizations}

\label{Sec: OS}
An example of a (smoothable fine compactified Jacobian) stability assignment on a nodal curve $X/K$ (as in Definition~\ref{finejacstab}) comes from numerical polarizations, introduced by Oda--Seshadri in \cite{oda79}. (In fact, the Oda--Seshadri formalism permits to also construct compactified Jacobians that are not necessarily \emph{fine} in the sense of Definition~\ref{def:finecompjac}). 

In this section we review the notion of numerical stability for a single curve (Definition~\ref{OS- stable}) and extend it to families (Definition~\ref{familyphistab}) following \cite{kp3}.

We let $\Gamma$ be the dual graph of $X$. 

\begin{definition} \label{OS- stable} Let $V^d(\Gamma) \subset \mathbf{R}^{\Verts(\Gamma)}$ be the sum-$d$ affine subspace. Let ${\phi} \in V^d(\Gamma)$, and let $\gammazero$ be a (not necessarily connected) spanning subgraph of $\Gamma$, with $E_0:= \Edges(\gammazero)$ and $E_0^c:= \Edges(\Gamma) \setminus E_0$. We say  that $\underline{\mathbf d} \in S^d_{\Gamma}(\gammazero)$ is {\bf ${\phi}$-semistable (resp. ${\phi}$-stable)} on $\gammazero$ when  the inequality

\begin{equation}  \label{stabineq}\left|\sum_{v \in V}( \underline{\bf{d}}(v) -\phi(v)) + \left|E_0^c \cap E(\Gamma(V)) \right|+\frac{\left|E_0^c \cap \operatorname{E}(V,V^c)\right|}{2}\right|\leq \frac{\left|E_0 \cap \operatorname{E}(V,V^c) \right|}{2} 
\end{equation}

(resp. $<$) is satisfied for all $\emptyset \neq V \subsetneq \Verts(\Gamma)$. (To keep the notation compact, in the inequality we have denoted the edge sets by $\operatorname{E}$ instead of $\Edges$). 

For $\phi \in V^d(\Gamma)$ we define the  (numerical, Oda--Seshadri) assignment associated with $\phi$, denoted $\stab_{\Gamma, \phi}$ by setting
\begin{equation} \label{phisemistable}
\stab_{\Gamma, {\phi}}(\gammazero):= \Set{ \underline{\mathbf d} \in S^d_{\Gamma}(\gammazero): \underline{\mathbf d} \text{ is } {\phi} \text{-semistable on } \gammazero } \subset S^d_{\Gamma}(\gammazero)
\end{equation}
for all spanning subgraphs $\gammazero \subseteq \Gamma$.

We define  ${\phi} \in V^d(\Gamma)$  to be {\bf nondegenerate} when for every spanning subgraph $\gammazero\subseteq \Gamma$, all elements of  $\stab_{\Gamma, {\phi}}(\gammazero)$ are ${\phi}$-stable. 
\end{definition}

Elements of $V^d(\Gamma)$ are called {\bf numerical polarizations}.

\begin{remark} If the spanning subgraph $\gammazero \subseteq \Gamma$ is not connected, then $\Edges(\Gamma) \setminus \Edges(\gammazero)$ is a collection of edges that disconnects $\Gamma$. If we take for $V$ a subset of $\Verts(\Gamma)$ such that the induced subgraph $\Gamma(V)$ is a connected component of $\gammazero$, the right-hand side of Inequality~\eqref{stabineq}  equals zero. Therefore, if ${\phi}$ is nondegenerate, we always have $\stab_{\Gamma, {\phi}} (\gammazero) = \emptyset$ whenever $\gammazero$ is not connected.

We conclude that, for nondegenerate ${\phi}$'s, the disconnected spanning subgraphs $\gammazero$ of $\Gamma$ do not carry any additional information and they can be disregarded, as we have done in Definition~\ref{finejacstab}. 
\end{remark}

Every nondegenerate numerical polarization gives a stability assignment:

\begin{proposition} \label{OSisfine} Let $X$ be a nodal curve and let ${\phi} \in V^d(\Gamma(X))$ be nondegenerate. Then the numerical assignment $\stab_{\Gamma, {\phi}}$ defined in \ref{OS- stable} is a degree $d$ stability assignment (as  in \ref{finejacstab}) and the moduli space $\fcj_{\stab_{\Gamma, {\phi}}}(X)$ of  sheaves that are stable with respect to $\stab_{\Gamma, {\phi}}$ (as defined in \ref{defstable}) is a degree~$d$ smoothable fine compactified Jacobian.
\end{proposition}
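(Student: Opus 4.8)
The plan is to reduce the statement to the machinery already developed in the earlier sections, specifically to Corollary~\ref{stab->fine}: once we know that $\stab_{\Gamma,\phi}$ is a degree~$d$ stability assignment in the sense of Definition~\ref{finejacstab}, the fact that $\fcj_{\stab_{\Gamma,\phi}}(X)$ is a smoothable degree~$d$ fine compactified Jacobian is immediate. So the entire content is to verify the two conditions of Definition~\ref{finejacstab} for the numerical assignment. First I would check that $\stab_{\Gamma,\phi}$ is a degree~$d$ assignment at all, i.e. that for each connected spanning subgraph $\gammazero$ and each $\underline{\mathbf d}\in \stab_{\Gamma,\phi}(\gammazero)$ we have $\underline{\mathbf d}\in S^d_\Gamma(\gammazero)$; this is built into the definition \eqref{phisemistable}, and the remark following Definition~\ref{OS- stable} shows that nondegeneracy forces $\stab_{\Gamma,\phi}(\gammazero)=\emptyset$ on disconnected $\gammazero$, so the assignment is supported exactly on connected spanning subgraphs as required.

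Next I would verify Part~(1) of Definition~\ref{finejacstab}, the chip-adding/openness condition: if $(\gammazero,\underline{\mathbf d})\in\stab_{\Gamma,\phi}$ and $e\in\Edges(\Gamma)\setminus\Edges(\gammazero)$ has endpoints $v_1,v_2$, then $(\gammazero\cup\{e\},\underline{\mathbf d}+\underline{\mathbf e}_{v_i})\in\stab_{\Gamma,\phi}$ for $i=1,2$. This is a direct manipulation of Inequality~\eqref{stabineq}: passing from $\gammazero$ to $\gammazero\cup\{e\}$ moves the edge $e$ from $E_0^c$ into $E_0$, which alters the terms $|E_0^c\cap E(\Gamma(V))|$, $|E_0^c\cap E(V,V^c)|$ and $|E_0\cap E(V,V^c)|$ in a way depending on whether $e$ is inside $\Gamma(V)$, inside $\Gamma(V^c)$, or crossing; simultaneously adding a chip at $v_i$ shifts $\sum_{v\in V}\underline{\mathbf d}(v)$ by $0$ or $1$ according to whether $v_i\in V$. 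One checks case by case (there are only a handful of configurations of $e$ relative to $V$) that the stability inequality for $\gammazero$ implies the (still strict, by nondegeneracy) inequality for $\gammazero\cup\{e\}$ with the shifted multidegree. This is the kind of bookkeeping that is routine but must be done carefully; it is essentially the standard compatibility of Oda--Seshadri stability with the stratification, and is already implicit in \cite{oda79,kp2,kp3}.

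Then I would verify Part~(2): for each connected spanning subgraph $\gammazero$, the set $\stab_{\Gamma,\phi}(\gammazero)\subset S^d_\Gamma(\gammazero)$ is a minimal complete set of representatives for the action of $\tw(\gammazero)$ on $S^d_\Gamma(\gammazero)$. Here I would use that the inequality \eqref{stabineq}, for fixed $\gammazero$, is precisely of the form ``$\underline{\mathbf d}$ lies in a fixed fundamental domain (a translated, rescaled copy of the zonotope/fundamental polytope for $\tw(\gammazero)$ acting on $S^d_\Gamma(\gammazero)$)''. Concretely, the inequalities cut out a translate of the fundamental parallelepiped for the lattice $\tw(\gammazero)$ inside the affine space $S^d_\Gamma(\gammazero)\otimes\mathbf R$, and nondegeneracy (i.e. $\phi$ avoiding the walls) guarantees that no lattice point of $S^d_\Gamma(\gammazero)$ lies on the boundary, so exactly one representative of each $\tw(\gammazero)$-orbit is selected. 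This is the classical Oda--Seshadri picture; alternatively, once Part~(1) is known, one can invoke Proposition~\ref{barmak} together with the count $|\stab_{\Gamma,\phi}(\gammazero)|=c(\gammazero)$ to get Part~(2) more cheaply, provided one first establishes that $\stab_{\Gamma,\phi}(T)\neq\emptyset$ for every spanning tree $T$ (which follows because for a tree $\tw(T)=0$ and the inequalities define a nonempty bounded region containing exactly one lattice point). Finally, having shown $\stab_{\Gamma,\phi}$ satisfies both conditions of Definition~\ref{finejacstab}, I apply Corollary~\ref{stab->fine} to conclude that $\fcj_{\stab_{\Gamma,\phi}}(X)$ is a smoothable degree~$d$ fine compactified Jacobian, which completes the proof.

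The main obstacle I anticipate is the clean verification that the polytope defined by \eqref{stabineq} is genuinely a fundamental domain for $\tw(\gammazero)$ on $S^d_\Gamma(\gammazero)$ for \emph{every} connected spanning subgraph $\gammazero$ simultaneously, and that nondegeneracy of a single global $\phi\in V^d(\Gamma)$ implies nondegeneracy of all the induced conditions on the subgraphs $\gammazero$; the Part~(1) case analysis, while tedious, is the kind of thing that should go through without surprises, whereas the lattice-theoretic counting argument is where the real structure lies and where it is easiest to invoke \cite{oda79} (or reduce to Proposition~\ref{barmak}) rather than reprove the fundamental-domain statement from scratch.
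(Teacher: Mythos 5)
Your proposal is correct in outline, but it runs the logic in the opposite direction from the paper. The paper's proof is two lines: it cites \cite[Proposition~2.9]{paganitommasi} for the \emph{geometric} statement that $\fcj_{\stab_{\Gamma,\phi}}(X)$ is a smoothable fine compactified Jacobian, and then deduces the \emph{combinatorial} statement (that $\stab_{\Gamma,\phi}$ is a stability assignment) by applying Corollary~\ref{associsstab} to that Jacobian. You instead establish the combinatorial statement first — Part~(1) of Definition~\ref{finejacstab} by direct manipulation of Inequality~\eqref{stabineq} (your case analysis on the position of $e$ relative to $V$ is right, and the strictness supplied by nondegeneracy is exactly what makes the shifted inequality survive), and Part~(2) by invoking the Oda--Seshadri fundamental-domain theorem — and then pass to geometry via Corollary~\ref{stab->fine}. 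Both routes are non-circular within the paper's architecture, and both ultimately lean on an external input: the paper on a geometric result from \cite{paganitommasi}, you on the lattice-theoretic content of \cite{oda79}. Your worry about whether nondegeneracy of a single $\phi$ controls all spanning subgraphs is already resolved by Definition~\ref{OS- stable}, which quantifies over all spanning subgraphs. The one genuinely under-specified point is your proposed ``cheaper'' alternative for Part~(2): Proposition~\ref{barmak} together with the count $\lvert\stab_{\Gamma,\phi}(\gammazero)\rvert=c(\gammazero)$ gives the right cardinalities but does not by itself yield that the set is a minimal \emph{complete} set of representatives for $\tw(\gammazero)$; the paper's own Proposition~\ref{coroll: chipfiring}\eqref{uno} needs the extra implication \cite[Theorem~1.20, (c) $\Rightarrow$ (b)]{viviani} to convert the count into Condition~(2), so you would need to invoke that (or stick with the fundamental-domain argument, which delivers minimality and completeness directly).
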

\begin{proof} The fact that the moduli space is a smoothable fine compactified Jacobian is \cite[Proposition~2.9]{paganitommasi}. Thus $\stab_{\Gamma, {\phi}}$ defines a stability assignment by Corollary~\ref{associsstab}.
\end{proof}

This leads to the following natural question:
 \begin{question} \label{question} Let $\sigma_\Gamma$ be a degree $d$  stability assignment. Does there exist a ${\phi} \in V^d(\Gamma)$ such that $\sigma_{\Gamma} =  \stab_{\Gamma, {\phi}}$? (If one such ${\phi}$ exists, it is necessarily nondegenerate).
\end{question}
Below we give three classes of  examples where we can answer Question~\ref{question} in the positive. 
\footnote{As mentioned in the introduction, after a first draft of this paper appeared in the arXiv repository, Filippo Viviani constructed in \cite[Example~1.27]{viviani} an example of a stability assignment on a nodal curve of genus $3$ that is not induced by a numerical stability condition. The example in loc.cit. also uses the combinatorics of \cite[Example 6.15]{paganitommasi}.}

 \begin{example} \label{irred-is-OS} (Irreducible curves). If $X$ is irreducible, then there is a unique stability assignment $\sigma$  (see Example~ \ref{ex: stab-irred})  and we have $\sigma=\sigma_{\phi}$ for $\phi$ the only element of $V^d(X)=\{d\}$.
\end{example}
\begin{example} \label{dollar} (Vine curves of type $t$). Let $\Gamma$ consist of $2$ vertices $v_1, v_2$ connected by $t$ edges (and no loops). The complexity of $\Gamma$ equals $t$.
With the notation as in Example~\ref{vinestab}, it is straightforward to check that a stability assignment $\sigma_{\Gamma}$ determined by some $\lambda \in \Z$ as described in \emph{loc. cit.} equals $\stab_{\Gamma, \phi_\Gamma}$ for \[\phi_\Gamma(v_1, v_2):= \left(\lambda- \frac{t-1}{2}, d -\lambda + \frac{t-1}{2} \right),\]
and that $\phi_\Gamma \in V^d(\Gamma)$ is nondegenerate. 
\end{example}

\begin{example} \label{genus1} (Curves whose dual graph has genus $1$). Let $X$ be such that $b_1(\Gamma(X))=1$. A degree~$d$ stability assignment $\sigma$ on $\Gamma(X)$ is the same datum as a stability assignment on $\Gamma'$, where $\Gamma'$ is obtained from $\Gamma(X)$ by setting the genus of each vertex to $0$. Let $X'$ be a curve whose dual graph is $\Gamma'$. Then by Corollary~\ref{stab->fine} the scheme $\overline{J}_{\sigma}(X')$ is a smoothable fine compactified Jacobian. Smoothable fine compactified Jacobians on $X'$ have been classified in \cite{paganitommasi}: they all arise as $\overline{J}_{\sigma_\phi}(X')$ for some $\phi \in V^d(\Gamma')=V^d(\Gamma(X))$. Thus, by Corollary~\ref{associsstab} we have that $\sigma$ is also of that form.
\end{example}

\begin{example} \label{ibd} (Integral Break Divisors).  Let $\Gamma$ be a (not necessarily stable) graph and assume $d=g(\Gamma)$. Define the stability assignment $\sigma_{\operatorname{IBD}}$ by \[\sigma_{\operatorname{IBD}}(\gammazero)=\{ \text{integral break divisors for } \gammazero \},\] for all connected spanning subgraphs $\gammazero \subseteq \Gamma$. 
Recall that $\underline{\bf d} \in S^{g(\Gamma)}_{\Gamma}(\gammazero)$ is an integral break divisor for $\gammazero$ if and only if it is of the form
\[
\underline{\bf d} (v) =g(v) + \sum_{e \in \Edges(\gammazero) \setminus \Edges(T)}  \underline{\bf e}_{t(e)}(v)
\]
for some choice of a spanning tree $T \subseteq \gammazero$ and of an orientation $t \colon \Edges(\gammazero) \setminus \Edges(T) \to \Verts(\gammazero)$ (for $w \in \Verts(\gammazero)$ we denote by $\underline{\bf e}_w$ the function that is $1$ on the vertex $w$ and $0$ elsewhere).   

We claim that $\sigma_{\operatorname{IBD}}$ is indeed a degree $d=g(\Gamma)$ stability assignment. Condition~(1) of Definition~\ref{finejacstab} follows directly from the definition of an integral break divisor. The second axiom follows immediately from \cite[Theorem~1.3]{abkm} (earlier proved in \cite{mz}).

When $\Gamma$ is stable, it follows from \cite[Lemma~5.1.5]{cps} that $\sigma_{\operatorname{IBD}}= \sigma_{\Gamma, \phi_{\operatorname{can}}^{\Gamma}}$, for 
\begin{equation} \label{phican}
\phi_{\operatorname{can}}^{\Gamma}(v):= \frac{g(\Gamma) }{2 g(\Gamma) -2} \cdot \left(2g(v)-2+\operatorname{val}_{\Gamma}(v)\right)
\end{equation}
where $\operatorname{val}_{\Gamma}(v)$ is the valence of the vertex $v$ in $\Gamma$. (This follows from \emph{loc. cit.} as $\phi^{\Gamma}_{\operatorname{can}}$ is induced from the canonical divisor, which is ample when $\Gamma$ is stable).

When $\Gamma$ is not necessarily stable, we define  
\begin{equation} \label{phiIBD}
\phi_{\operatorname{IBD}}(v):= \frac{g(\Gamma) + |\Verts(\Gamma)|}{2 \left( g(\Gamma) + |\Verts(\Gamma)|\right) -2} \cdot (2g(v)+\operatorname{val}_{\Gamma}(v)) -1.
\end{equation}
We claim that $\sigma_{\operatorname{IBD}}= \sigma_{\Gamma, \phi_{\operatorname{IBD}}}$.

Let $\Gamma'$ be the graph obtained from $\Gamma$ by increasing the genus of each vertex by $1$. Then the integral break divisors of $\Gamma'$ are the integral break divisors on $\Gamma$ increased by $1$ on each vertex and $\Gamma'$ is stable. Therefore the integral break divisors on $\Gamma$ are stable for the stability assignment $\phi_{\operatorname{can}}^{\Gamma'}-1$, which equals $\phi_{\operatorname{IBD}}$. This proves our claim.

\end{example}

We are now ready to define families of numerical polarizations, similarly to what was done in Definition~\ref{familyfinejacstab} for families of stability assignments. We shall see that there are two ways of doing so, and that they are not equivalent.

\begin{definition} \label{def:morph}
Let $f \colon \Gamma \to \Gamma'$ be a morphism of stable graphs. We say that $\phi \in V^d(\Gamma)$ is $f$-compatible with $\phi' \in V^d(\Gamma')$ if
\[
\phi'(w)= \sum_{f(v)=w} \phi(v).
\]
\end{definition}

\begin{definition} \label{familyphistab}
Let $\mathcal{X}/S$ be a family of nodal curves, and let $\Phi=(\phi_s \in V^d(\Gamma(\mathcal{X}_s)) )_{s \in S}$ be a collection of numerical polarizations, one for each geometric point of $S$. Assume also that $\Phi$ is \textbf{nondegenerate}, by which we mean that so is each of its coordinates $\phi_s$ for $s \in S$ (as per Definition~\ref{OS- stable}).

The collection $\Phi$ is \textbf{weakly compatible} if the collection $\sigma_{\Phi}:=(\sigma_{\Gamma(X_s), \Phi(\Gamma(X_s))})_{s \in S}$ of degree~$d$ stability assignments defined via Proposition~\ref{OSisfine} is a family of degree $d$ stability assignments as prescribed by Definition~\ref{familyfinejacstab}.

The collection $\Phi$ is \textbf{strongly compatible} if it is $f$-compatible for all morphisms  $f \colon \Gamma(\mathcal{X}_{s}) \to \Gamma(\mathcal{X}_t)$ that arise from some \'etale specialization $t \rightsquigarrow s$ occurring on $S$.

\end{definition} 

For a strongly compatible collection, one can define the notion of a $\Phi$-stable sheaf for all fibers as in Definition~\ref{OS- stable}. We shall not do that, as this would be a repetition of our Definition~\ref{defstablefamily}. Instead, we observe the following:
\begin{corollary} \label{cor: familyphi}
    Let $\mathcal{X}/S$ be a family of nodal curves over an irreducible scheme $S$ with generic point $\theta$, and assume that the generic element $\mathcal{X}_\theta/\theta$ is smooth. If $\Phi$ is a nondegenerate and weakly compatible family of numerical polarizations, the moduli space $\overline{\mathcal{J}}_{\sigma_{\Phi}}$ of sheaves that are stable with respect to $\sigma_{\Phi}$ is a family of degree~$d$ fine compactified Jacobians.
\end{corollary}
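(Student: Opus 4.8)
The plan is to chain together results that have already been established, with no new computation required. First I would observe that, since $\Phi$ is nondegenerate, Proposition~\ref{OSisfine} applies to each fiber: for every geometric point $s$ of $S$ the numerical assignment $\stab_{\Gamma(X_s), \phi_s}$ is a degree~$d$ stability assignment on $\Gamma(X_s)$ in the sense of Definition~\ref{finejacstab}. This is the first ingredient, and it is exactly where nondegeneracy of $\Phi$ (as required in Definition~\ref{familyphistab}) enters.

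Next I would unwind the hypothesis that $\Phi$ is \emph{weakly compatible}. By Definition~\ref{familyphistab}, this hypothesis says precisely that the collection $\sigma_\Phi = (\stab_{\Gamma(X_s), \phi_s})_{s \in S}$ is a family of degree~$d$ stability assignments for $\mathcal{X}/S$ in the sense of Definition~\ref{familyfinejacstab}; that is, the fiberwise assignments are compatible with every graph morphism $\Gamma(X_s)\to\Gamma(X_t)$ arising from an \'etale specialization $t\rightsquigarrow s$ occurring on $S$. So nothing further has to be verified at this step: weak compatibility is built so as to be exactly the input needed downstream.

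Finally I would apply Theorem~\ref{cor: fromstabtojac} with $\mathfrak{S} := \sigma_\Phi$. Since $S$ is irreducible with generic point $\theta$ and $\mathcal{X}_\theta/\theta$ is smooth, that theorem gives that $\overline{\mathcal{J}}^d_{\mathfrak{S}} = \overline{\mathcal{J}}_{\sigma_\Phi} \subseteq \Simp^d(\mathcal{X}/S)$ is a degree~$d$ fine compactified Jacobian for the family $\mathcal{X}/S$ (open in $\Simp^d(\mathcal{X}/S)$ and proper over $S$, as in Definition~\ref{def:familyfinecompjac}), which is the assertion. There is no genuine obstacle at this level: all the substance lies in the cited results, namely Proposition~\ref{OSisfine} (which itself rests on \cite[Proposition~2.9]{paganitommasi} together with Corollary~\ref{associsstab}) and Theorem~\ref{cor: fromstabtojac} (whose delicate point is the properness statement Lemma~\ref{properoverdelta}); the present corollary is purely a bookkeeping combination of these two facts with the definition of weak compatibility.
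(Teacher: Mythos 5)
Your proposal is correct and is essentially identical to the paper's own proof: both deduce from Proposition~\ref{OSisfine} (using nondegeneracy) and the definition of weak compatibility that $\sigma_\Phi$ is a family of degree~$d$ stability assignments, and then conclude by Theorem~\ref{cor: fromstabtojac}. No discrepancies to report.
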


\begin{proof}
    By combining Proposition~\ref{OSisfine} with the fact that $\Phi$ is nondegenerate and weakly compatible, we deduce that $\sigma_{\Phi}$ is a family of stability assignments for $\mathcal{X}/S$. The result follows then from Theorem~\ref{cor: fromstabtojac}.
\end{proof}

In \cite{kp3} the authors studied  the theory of universal Oda--Seshadri stability assignments for compactified Jacobians. In \emph{loc. cit.} they defined, for fixed  $(g,n)$ such that $2g-2+n>0$, the  space $V_{g,n}^d$ of {\bf universal numerical polarizations}, i.e. elements $\Phi=(\phi_s)_{s \in \Mb_{g,n}}$ that are strongly compatible for  morphisms $f \colon \Gamma_1 \to \Gamma_2$ between any two elements of ${G}_{g,n}$ (a skeleton of the category of stable $n$-pointed graphs of genus $g$). 

In \emph{loc. cit.} the authors also constructed, for each such nondegenerate $\Phi \in V_{g,n}^d$, a  compactified Jacobian $\Jb_{g,n}(\Phi)$. In the language of this paper, this can be rephrased as follows.
\begin{corollary} \label{stronglycomp}
If $\Phi\in V_{g,n}^d$ is a nondegenerate universal numerical stability condition, then $\Jb_{g,n}(\Phi) \subset \Simp^d(\Cb_{g,n}/\Mb_{g,n})$ is a fine compactified universal Jacobian.
\end{corollary}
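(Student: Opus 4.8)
The plan is to identify the Kass--Pagani space $\Jb_{g,n}(\Phi)$ with the moduli space $\overline{\mathcal{J}}_{\sigma_\Phi}$ of $\sigma_\Phi$-stable sheaves of Definition~\ref{defstablefamily}, and then to invoke Corollary~\ref{cor: familyphi}. First I would recall the construction of $\Jb_{g,n}(\Phi)$ from \cite{kp3}: a geometric point $[F]$ of $\Simp^d(\Cb_{g,n}/\Mb_{g,n})$ lying over $[X_s]\in\Mb_{g,n}$ belongs to $\Jb_{g,n}(\Phi)$ exactly when $\underline{\deg}(F)$ is $\phi_s$-stable on $\gammazero(F)$ in the sense of Definition~\ref{OS- stable}, that is, when $(\gammazero(F),\underline{\deg}(F))\in\stab_{\Gamma(X_s),\phi_s}$. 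In the notation of Definitions~\ref{defstable} and~\ref{defstablefamily} this says precisely that $\Jb_{g,n}(\Phi)=\overline{\mathcal{J}}_{\sigma_\Phi}$, where $\sigma_\Phi=(\stab_{\Gamma(X_s),\phi_s})_{s\in\Mb_{g,n}}$ is the fibrewise collection of numerical assignments.

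The key step will be to show that a nondegenerate, strongly compatible $\Phi$ is in particular nondegenerate and \emph{weakly} compatible, i.e.\ that $\sigma_\Phi$ is a family of degree~$d$ stability assignments in the sense of Definition~\ref{familyfinejacstab}. By Proposition~\ref{OSisfine} together with nondegeneracy, each $\stab_{\Gamma(X_s),\phi_s}$ is already a degree~$d$ stability assignment, so the only remaining point is compatibility with every graph morphism $f\colon\Gamma(X_s)\to\Gamma(X_t)$ coming from an \'etale specialization $t\rightsquigarrow s$. Such an $f$ contracts precisely the edges of $\Gamma(X_s)$ corresponding to the nodes of $X_s$ that are smoothed in $X_t$, and strong compatibility (Definition~\ref{def:morph}) asserts $\phi_t(w)=\sum_{f(v)=w}\phi_s(v)$. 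Given $(\gammazero,\underline{\mathbf d})\in\stab_{\Gamma(X_s),\phi_s}$, I would set $\gammazero'=f(\gammazero)$ (automatically connected and spanning) and let $\underline{\mathbf d'}$ be defined by \eqref{comp-contractions}; then for every $\emptyset\neq V'\subsetneq\Verts(\Gamma(X_t))$ I would take $V:=f^{-1}(V')$ and check directly that the $\phi_t$-stability inequality~\eqref{stabineq} for $(\gammazero',\underline{\mathbf d'})$ at $V'$ is literally the $\phi_s$-stability inequality for $(\gammazero,\underline{\mathbf d})$ at $V$ --- the edges of $\Gamma(X_s)\setminus\gammazero$ contracted by $f$ migrate from the term counting $E_0^c\cap E(\Gamma(V))$ into the summand of \eqref{comp-contractions} counting contracted non-$\gammazero$ edges, while $\phi$ and $\underline{\mathbf d}$ transform additively. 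Hence $(\gammazero',\underline{\mathbf d'})\in\stab_{\Gamma(X_t),\phi_t}$, which is exactly the $f$-compatibility of Definition~\ref{combo-family}. (Alternatively, one may simply cite \cite{kp3} for this implication.)

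Finally, I would apply Corollary~\ref{cor: familyphi} to the universal family $\Cb_{g,n}/\Mb_{g,n}$: the base $\Mb_{g,n}$ is irreducible and its generic point parametrizes a smooth curve, so the corollary (which, exactly as in Corollary~\ref{bijection}, extends to the case where $S$ is a Deligne--Mumford stack such as $\Mb_{g,n}$; if one prefers to stay with the scheme-theoretic statement one passes to an \'etale atlas of $\Mb_{g,n}$ and uses Theorem~\ref{cor: fromstabtojac}) yields that $\overline{\mathcal{J}}_{\sigma_\Phi}=\Jb_{g,n}(\Phi)$ is an open substack of $\Simp^d(\Cb_{g,n}/\Mb_{g,n})$ that is proper over $\Mb_{g,n}$. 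By Definition~\ref{def:familyfinecompjac} this is precisely the statement that $\Jb_{g,n}(\Phi)$ is a degree~$d$ fine compactified universal Jacobian.

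The hard part will be the combinatorial bookkeeping in the second paragraph: verifying that additivity of numerical polarizations under edge contractions translates verbatim into the multidegree-compatibility condition~\eqref{comp-contractions} for the induced stability assignments, keeping careful track of the contracted non-$\gammazero$ edges. Once that is established, the rest of the argument is a formal consequence of Proposition~\ref{OSisfine}, Corollary~\ref{cor: familyphi}, and the definitions.
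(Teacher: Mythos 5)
Your proposal is correct and follows essentially the same route the paper takes: the paper treats this corollary as an immediate rephrasing of the construction in \cite{kp3}, obtained by combining Corollary~\ref{cor: familyphi} with the observation (asserted as ``clear'' in Remark~\ref{n>0}) that a strongly compatible nondegenerate family is weakly compatible. The only difference is that you spell out that last implication by checking Inequality~\eqref{stabineq} transforms correctly under edge contraction with $V=f^{-1}(V')$ — a verification the paper leaves implicit, and which you carry out correctly.
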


\begin{remark} \label{Rem: phican}
    The space $V^d_{g,n}$ is never empty, as it always contains the \emph{universal canonical polarization}, defined as \(\Phi_{\operatorname{can}}^d := \frac{d}{2g-2}\; \underline{\deg}(\omega_{\pi}) \in V^d_{g,n}\). (When $d=g$ this coincides with the polarization defined in Equation~\eqref{phican}). It was observed in \cite[Remark~5.13]{kp3} that $\Phi_{\operatorname{can}}^d$  is nondegenerate precisely when $d-g+1$ and $2g-2$ are coprime. (In the $n=0$ case, this is already in \cite[p.~594]{caporaso}). 
\end{remark}

A natural question that we will address next is whether every fine compactified universal Jacobian arises as in Corollary~\ref{stronglycomp}, or if one can construct fine compactified universal Jacobian from nondegenerate weakly compatible numerical stability conditions that are not strongly compatible.

\begin{remark} \label{n>0}
It is clear that a strongly compatible nondegenerate family of numerical polarizations is also weakly compatible. The converse is not true. Even more: on some families $\mathcal{X}/S$ there exist nondegenerate families of numerical polarizations $\Phi=\left(\phi_s \in V(\Gamma_s)\right)$ that are weakly compatible, and such that  there exists no nondegenerate  $\Phi'=(\phi'_s)_{s \in S}$ that is strongly compatible and satisfies $\stab_{\Gamma_s, \phi'_s}=\stab_{\Gamma_s, \phi_s}$ for all  $s \in S$. 

Such examples are shown to exist in \cite[Section~6]{paganitommasi}, when $\mathcal{X}/S$ is the universal family $\overline{\mathcal{C}}_{1,n}/ \Mb_{1,n}$ and $n \geq 6$. In other words, for all $n\geq 6$ there are fine compactified universal Jacobians $\overline{\mathcal{J}}_{1,n}$ that are not of the form $\overline{\mathcal{J}}_{1,n}(\Phi)$ for any $\Phi \in V^d_{1,n}$.
\end{remark}

Our main result in the next section, Theorem~\ref{OS=fine},  settles in the affirmative the analogous question for the case of the universal family $\overline{\mathcal{C}}_{g}/ \Mb_{g}$ for all $g \geq 2$. More explicitly, when $n=0$, every fine compactified universal Jacobian $\overline{\mathcal{J}}_{g}$ arises as in Corollary~\ref{stronglycomp}, i.e. it is of the form $\overline{\mathcal{J}}_{g}(\Phi)$ for some  $\Phi \in V_g^d=V_{g,0}^d$.

\section{Classification of universal stability assignments}
\label{sec: univ}

The main result in this section is a classification of  \emph{universal}  stability assignments, that is, the case where the family $\mathcal{X}/S$ is the universal family over $S= \Mb_g$, the moduli stack of stable curves of genus $g$ (and no marked points). As an intermediate step, we will also produce results that are valid for the case of the universal family over $S= \Mb_{g,n}$ for arbitrary $n$.

One way to generate  universal  stability assignments is  by means of strongly compatible (universal) numerical polarizations, see Definition~\ref{familyphistab} and Corollary~\ref{stronglycomp}. Our main result here is Theorem~\ref{OS=fine}, where we classify all  universal  stability assignments with $n=0$ and for every genus, by proving that they all arise from strongly compatible numerical polarizations, i.e. they all are of the form $\stab_{\Phi}$ for some $\Phi \in V_g^d$ (see Corollary~\ref{cor: familyphi} and Corollary~\ref{stronglycomp}). Combining with the fact, shown in Proposition~\ref{compcontract}, that universal stability assignments classify fine compactified universal Jacobians, we deduce in Corollary~\ref{maincoroll} that, in the absence of marked points, there are no  fine compactified universal Jacobians other than the classical ones constructed in the nineties by Caporaso, Pandharipande and Simpson.

 As discussed in Remark~\ref{n>0}, an analogous result does not hold, in general, when $n>0$.

Recall from Example~\ref{ex: vinecurves} that, for $t \in \mathbf{N}$, a {\bf vine curve of type $t$} is a curve with $2$ nonsingular irreducible components joined by $t$ nodes. These curves and their dual graphs will play an important role in this section.

We will break down our argument in $3$ parts.

\subsection{First part: universal stability assignments are uniquely determined by their restrictions to vine curves}

The main result of this subsection, summarized in the title, is Corollary~\ref{determinedon2} below. The result is obtained as a combination of Lemma~\ref{enoughontrees} and Lemma~\ref{determinedover2}.  The first  establishes that a stability assignment for a given curve is uniquely determined by its restriction to the spanning trees of the dual graph of that curve. The second one shows that an assignment of integers on all spanning trees of all elements of $G_{g,n}$  that is compatible with graph morphisms is uniquely determined by its values over the dual graphs of all vine curves. 

Let us fix a degree~$d$ universal stability assignment $\sigma$ of type $(g,n)$. Recall from Definition~\ref{familyfinejacstab} (and Remark~\ref{Rem: univ stab}) that this is the datum  of a  collection \[\stab=\Set{\stab_{\Gamma}}_{\Gamma \in {G}_{g,n}}.\] that is compatible with all graph morphisms in $G_{g,n}$.

By Condition (1) of Definition~\ref{finejacstab}, if $T \subseteq \Gamma$ is a spanning \emph{tree}, then $\stab_{\Gamma}(T)= \set{\underline{\mathbf d}_{T}}$ contains a \emph{unique} multidegree  $\underline{\mathbf d}_{T} \in S^d_\Gamma(T)$. 
The following lemma shows that a stability assignment is ``overdetermined'' by the collection of data obtained by extracting this unique value from all spanning trees. 

\begin{lemma} \label{enoughontrees} Let $\Gamma$ be a stable graph and let us fix, for all spanning trees $T$ of $\Gamma$, a multidegree  $\underline{\mathbf d}_{T} \in S^d_\Gamma(T)$. Then there exists \emph{at most one} degree $d$ stability assignment $\stab_{\Gamma}$ whose value  $\stab_{\Gamma}(T)$ equals $\Set{\underline{\mathbf d}_{T}}$ for all spanning trees $T$.
\end{lemma}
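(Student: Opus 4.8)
The plan is to show that the value $\sigma_\Gamma(\gammazero)$ on an arbitrary connected spanning subgraph $\gammazero$ is forced by the collection $\{\underline{\mathbf d}_T\}_T$ of multidegrees on spanning trees, so that two stability assignments agreeing on all spanning trees must coincide everywhere. Let $\sigma_\Gamma$ and $\sigma_\Gamma'$ be two degree~$d$ stability assignments with $\sigma_\Gamma(T) = \sigma_\Gamma'(T) = \{\underline{\mathbf d}_T\}$ for every spanning tree $T$. By Proposition~\ref{barmak}, since both satisfy Part~(1) of Definition~\ref{finejacstab} and are nonempty on every spanning tree, we already know $|\sigma_\Gamma(\gammazero)| = c(\gammazero) = |\sigma_\Gamma'(\gammazero)|$ for every connected spanning subgraph $\gammazero$; so it suffices to prove one inclusion, say $\sigma_\Gamma(\gammazero) \subseteq \sigma_\Gamma'(\gammazero)$, and then symmetry and finiteness give equality.

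First I would set up the combinatorial core: fix a connected spanning subgraph $\gammazero$ and a multidegree $\underline{\mathbf d} \in \sigma_\Gamma(\gammazero)$. Repeatedly applying Part~(1) of Definition~\ref{finejacstab} in reverse is not directly available, so instead I would apply Part~(1) \emph{forward} along a path of edge-additions that turns $\gammazero$ into $\Gamma$: adding edges one at a time and ``adding a chip'' to a chosen endpoint, we see that every element of $\sigma_\Gamma(\gammazero)$ propagates to elements of $\sigma_\Gamma(\Gamma)$, but this loses information. The right move is the opposite: I want to reconstruct $\underline{\mathbf d}$ from the tree data. The key observation is that for a spanning tree $T \subseteq \gammazero$, iterating Part~(1) along the edges of $\gammazero \setminus T$ (in any order, with any orientation) sends $\underline{\mathbf d}_T$ to an element of $\sigma_\Gamma(\gammazero)$; the set of all elements obtained this way, ranging over all spanning trees $T \subseteq \gammazero$ and all orientations of $\gammazero \setminus T$, is exactly the set of ``semibreak-type'' modifications of the $\underline{\mathbf d}_T$'s. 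Call this set $B(\gammazero) \subseteq \sigma_\Gamma(\gammazero)$; it depends only on the tree data, hence $B(\gammazero)$ is the same for $\sigma_\Gamma$ and $\sigma_\Gamma'$. So it remains to show $B(\gammazero) = \sigma_\Gamma(\gammazero)$, i.e. that these forced elements already exhaust $\sigma_\Gamma(\gammazero)$.

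The main obstacle — and the heart of the lemma — is precisely this last surjectivity statement: that every stable multidegree on $\gammazero$ is obtained from \emph{some} spanning tree of $\gammazero$ by a semibreak perturbation, with the perturbation landing back in the assignment. This is a genuinely nontrivial combinatorial fact; it is the content of the result attributed to Barmak~\cite{barmak} (and reproved in \cite[Theorem~3.5.1]{chiho} and subsumed in \cite[Theorem~1.17]{viviani}). Concretely, I would argue: $B(\gammazero) \subseteq \sigma_\Gamma(\gammazero)$ by Part~(1); the map sending a pair (spanning tree $T$ of $\gammazero$, orientation of $\gammazero \setminus T$) to its semibreak modification, composed with the quotient $S^d_\Gamma(\gammazero) \to J^d(\gammazero)$, is surjective onto $J^d(\gammazero)$ — this is the standard fact that break divisors (here semibreak divisors, cf. Definition~\ref{def: pert} and \cite{mz,abkm}) represent every class in the Jacobian of the graph; since $\sigma_\Gamma(\gammazero)$ is a \emph{complete} set of representatives (Part~(2)), each class is hit, so $B(\gammazero)$ meets every $\tw(\gammazero)$-orbit; and since $\sigma_\Gamma(\gammazero)$ is \emph{minimal} (again Part~(2)), having one representative per orbit, the inclusion $B(\gammazero) \subseteq \sigma_\Gamma(\gammazero)$ together with ``$B(\gammazero)$ meets every orbit'' forces $B(\gammazero) = \sigma_\Gamma(\gammazero)$. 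The same holds for $\sigma_\Gamma'$, and since $B(\gammazero)$ is determined by the tree data alone, we conclude $\sigma_\Gamma(\gammazero) = \sigma_\Gamma'(\gammazero)$ for all $\gammazero$, hence $\sigma_\Gamma = \sigma_\Gamma'$, establishing uniqueness. The one delicate point to verify carefully is that the semibreak modifications genuinely surject onto the graph Jacobian for an arbitrary connected graph $\gammazero$ (not just a tree or a ``necklace''); this is where the cited combinatorial input of Barmak is essential, and I would invoke it rather than reprove it.
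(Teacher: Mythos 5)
Your argument is essentially the paper's: your set $B(\gammazero)$ is exactly the paper's auxiliary assignment $M_{\Gamma}(\gammazero)$, namely the minimal assignment generated from the tree data $\{\underline{\mathbf d}_T\}_T$ by Condition~(1) of Definition~\ref{finejacstab}; the inclusion $B(\gammazero)\subseteq\sigma_{\Gamma}(\gammazero)$ is the same first step, and the conclusion ultimately rests on Proposition~\ref{barmak} together with the equality $|\sigma_{\Gamma}(\gammazero)|=c(\gammazero)$ coming from Condition~(2). One caveat on your justification of the key step: the claim that $B(\gammazero)$ meets every $\tw(\gammazero)$-orbit ``by the standard fact that (semi)break divisors represent every class in the Jacobian'' does not apply as stated, because the base multidegrees $\underline{\mathbf d}_T$ vary with the spanning tree $T$, whereas the break-divisor theorem of \cite{mz,abkm} concerns perturbations of a single fixed divisor (the genus function). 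Fortunately this detour is unnecessary: Proposition~\ref{barmak} applies directly to the assignment $B$ (it satisfies Condition~(1) by construction and is nonempty on every spanning tree) and yields $|B(\gammazero)|\geq c(\gammazero)$; combined with $B(\gammazero)\subseteq\sigma_{\Gamma}(\gammazero)$ and $|\sigma_{\Gamma}(\gammazero)|=c(\gammazero)$, this already forces $B(\gammazero)=\sigma_{\Gamma}(\gammazero)$ --- which is precisely the paper's argument, and makes the ``meets every orbit'' statement a consequence rather than an input. With that substitution your proof is correct and coincides with the paper's.
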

\begin{proof}
Let $\stab_{\Gamma}$ be a degree~$d$ stability assignment that satisfies the hypothesis. Inductively define another collection $M_{\Gamma}$ by defining $M_{\Gamma}(\gammazero) \subset S_{\Gamma}^d(\gammazero)$ for all spanning subgraphs $\gammazero$ of $\Gamma$. Starting from the assignments $M_{\Gamma}(T):=\Set{d_{T}}$ for all spanning trees $T \subseteq \gammazero$,  let $M_{\Gamma}$ be the minimal assignment that satisfies Condition~(1) of Definition~\ref{finejacstab}. 

Because $\stab_{\Gamma}$ satisfies Condition~(1) of Definition~\ref{finejacstab}, for all spanning subgraphs $\gammazero \subseteq \Gamma$ we have the inclusion $M_{\Gamma}(\gammazero) \subseteq \stab_{\Gamma}(\gammazero)$. The inequality  $|M_{\Gamma}(\gammazero)|\geq c(\gammazero)$ follows from Proposition~\ref{barmak}. By Condition~(2) of Definition~\ref{finejacstab} we also have the equality $|\stab_{\Gamma}(\gammazero)| = c(\gammazero)$. From this we conclude that $M_{\Gamma}(\gammazero)$ and  $\stab_{\Gamma}(\gammazero)$ must coincide.
\end{proof}

By applying the same idea as in \cite[Lemma~3.8]{kp2} and \cite[Lemma~3.9]{kp3}, we  now see how compatibility with graph morphisms propagates an assignment of  multidegrees on all  strata of vine curves (with the exception of the vine curves whose graph admits a symmetry that swaps the two vertices) to an assignment on all vertices of all  spanning trees of all graphs $\Gamma \in G_{g,n}$.
The reason for excluding the symmetric graphs is that compatibility under automorphisms forces  all  universal degree~$d$ stability assignments to have the same value on those graphs. 

Let $T_{g,n} \subseteq G_{g,n}$ be the collection of loopless graphs with $2$ vertices, and let $T'_{g,n} \subseteq T_{g,n}$ be the subset of graphs such that each automorphism fixes the two vertices. For each element $G \in T_{g,n}$, fix an ordering of its two vertices, i.e. $\Verts(G)=\{v_1^G, v_2^G\}$. Let $C_{g,n} \subseteq T_{g,n}$ and be the subset of graphs with exactly $1$ edge (equivalently, the dual graphs of vine curves with no separating nodes, equivalently the dual graphs of vine curves whose strata have codimension $1$ in $\overline{\mathcal{M}}_{g,n}$), and let $G_{g,n}^{\operatorname{NS}} \subseteq G_{g,n}$ be the subset of graphs without separating edges.

\begin{lemma} \label{determinedover2} For each function $\alpha \colon T'_{g,n} \to \Z$, there  exists a unique collection of assignments $\underline{\mathbf d}_{\Gamma, T} \in S^d_{\Gamma}(T)$ for each spanning tree $T \subseteq \Gamma$ of each element $\Gamma \in G_{g,n}$ such that we have 
\begin{equation}\label{compcond} \sum_{v \in \Verts(\Gamma) : f(v) = v_1^G} \underline{\mathbf d}_{\Gamma, T}(v) = \alpha(G)
\end{equation}
for all morphisms $f \colon \Gamma \to 
G$ where $G$ is in $T_{g,n}$.

Similarly, for each function $\beta \colon T'_{g,n}\setminus C_{g,n} \to \Z$, there  exists a unique collection of assignments $\underline{\mathbf d}_{\Gamma, T} \in S^d_{\Gamma}(T)$ for each spanning tree $T \subseteq \Gamma$ of each element $\Gamma \in G_{g,n}^{\operatorname{NS}}$ such that \eqref{compcond} is satisfied for all morphisms
$f \colon \Gamma \to 
G$ where $G$ is in $T_{g,n}$.
\end{lemma}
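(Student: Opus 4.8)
The plan is to follow the strategy of \cite[Lemma~3.8]{kp2} and \cite[Lemma~3.9]{kp3}, reconstructing $\underline{\mathbf d}_{\Gamma,T}$ one vertex at a time by contracting $\Gamma$ onto two-vertex graphs. I would begin with two reductions. First, since $n_\Gamma(T)=b_1(\Gamma)$ for every spanning tree $T\subseteq\Gamma$, the set $S^d_\Gamma(T)$ and the constraints that \eqref{compcond} imposes on $\underline{\mathbf d}_{\Gamma,T}$ depend only on $\Gamma$ and not on $T$; hence it suffices to produce, for each $\Gamma\in G_{g,n}$, a unique integer-valued $\underline{\mathbf d}_\Gamma$ on $\Verts(\Gamma)$ with $\sum_v\underline{\mathbf d}_\Gamma(v)=d-b_1(\Gamma)$ satisfying all the conditions, and then set $\underline{\mathbf d}_{\Gamma,T}:=\underline{\mathbf d}_\Gamma$. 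Second, a morphism $f\colon\Gamma\to G$ with $G\in T_{g,n}$ is the same datum as a partition $\Verts(\Gamma)=V\sqcup V^c$ with $\Gamma(V)$ and $\Gamma(V^c)$ connected (plus the identification of the contracted graph with the skeleton representative $G$), the edges of $G$ being $\Edges(V,V^c)$; and when $G$ lies in $T_{g,n}\setminus T'_{g,n}$ its vertex-swap is itself a morphism $G\to G$, so applying \eqref{compcond} to it and to $\operatorname{id}_G$ forces $\underline{\mathbf d}_G$ to take the value $(d-b_1(G))/2$ on both vertices. Thus $\alpha$ on $T'_{g,n}$ extends uniquely to a function $\tilde\alpha$ on all of $T_{g,n}$, and it is $\tilde\alpha$ (together, once one unwinds the definitions, with the correction term of \eqref{comp-contractions} counting non-tree edges contracted to a given vertex) that enters the conditions; integrality of $(d-b_1(G))/2$ on the symmetric $G$ is exactly the obstruction to existence, and holds in every situation where the lemma is applied.

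For uniqueness, suppose $\underline{\mathbf d}_\Gamma$ and $\underline{\mathbf d}'_\Gamma$ both satisfy the conditions and set $\underline{\mathbf e}=\underline{\mathbf d}_\Gamma-\underline{\mathbf d}'_\Gamma$, so $\sum_v\underline{\mathbf e}(v)=0$ and $\sum_{v\in V}\underline{\mathbf e}(v)=0$ for every admissible partition $(V,V^c)$. Fix $v\in\Verts(\Gamma)$: if the subgraph induced on $\Verts(\Gamma)\setminus\{v\}$ is connected, take $V=\{v\}$ and conclude $\underline{\mathbf e}(v)=0$; if $v$ is a cut vertex, let $W_1,\dots,W_k$ be the connected components of $\Gamma$ with $v$ removed, apply the vanishing to each admissible partition $(W_i,\Verts(\Gamma)\setminus W_i)$ and sum to get $\underline{\mathbf e}(v)=0$. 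Hence $\underline{\mathbf e}=0$. Running the same contractions on an honest solution yields explicit formulas expressing every $\underline{\mathbf d}_\Gamma(v)$ through the $\tilde\alpha$-values of these two-vertex contractions and the associated corrections.

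For existence I would take those formulas as the definition of $\underline{\mathbf d}_\Gamma$ and verify: (a) the values are integers with $\sum_v\underline{\mathbf d}_\Gamma(v)=d-b_1(\Gamma)$, the sum condition reducing via the definition to the identity $\tilde\alpha(G)+\tilde\alpha(\overline G)=d-b_1(G)$ for a two-vertex graph and its vertex-swap, which holds by construction of $\tilde\alpha$ (on $T'_{g,n}$ because $\underline{\mathbf d}_G$ sums to $d-b_1(G)$, on the symmetric graphs by the balancing above); and (b) the function $\underline{\mathbf d}_\Gamma$ satisfies \eqref{compcond} for \emph{every} morphism $f\colon\Gamma\to G$, $G\in T_{g,n}$, not merely those used to define it. Part (b) is the technical heart: for an arbitrary admissible partition $(V,V^c)$ one must show $\sum_{v\in V}\underline{\mathbf d}_\Gamma(v)$ equals $\tilde\alpha$ of the corresponding vertex of the contracted graph, up to the correction term. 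I would prove this by transitivity of contractions, factoring $\Gamma\to G$ through the graph in which only $\Gamma(V)$ has been contracted so that the ``$V$-vertex'' value is computed on a graph with strictly fewer contraction steps, together with induction on $|\Verts(\Gamma)|$; concretely this is a telescoping of the vertex-by-vertex formulas for $\underline{\mathbf d}_\Gamma$ across $V$, exactly as in \cite[Lemma~3.8]{kp2} and \cite[Lemma~3.9]{kp3}.

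For the second statement, if $\Gamma\in G_{g,n}^{\operatorname{NS}}$ has no separating edge then no admissible partition $(V,V^c)$ can satisfy $|\Edges(V,V^c)|=1$ (such an edge would be a bridge of $\Gamma$), so no morphism out of $\Gamma$ targets $C_{g,n}$; hence only the values of $\beta$ on $T'_{g,n}\setminus C_{g,n}$ — and the forced values on the symmetric graphs outside $C_{g,n}$ — enter, and the argument above applies verbatim. I expect the main obstacle to be precisely the consistency verification in part (b): showing that the over-determined system cut out by \eqref{compcond} over all admissible partitions is genuinely solved by the function we built, which is where the transitivity bookkeeping and the careful separation of the symmetric two-vertex graphs (whose $\tilde\alpha$-value is the forced $(d-b_1)/2$ rather than a freely prescribed one) are indispensable.
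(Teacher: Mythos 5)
There is a genuine gap, and it sits in your very first reduction. You claim that the constraints \eqref{compcond} do not involve $T$, so that $\underline{\mathbf d}_{\Gamma,T}$ may be taken independent of the spanning tree. But the only morphisms $f\colon\Gamma\to G$ with $G\in T_{g,n}$ that impose a single well-defined condition on $\underline{\mathbf d}_{\Gamma,T}$ are those for which $f(T)$ is a spanning tree of $G$, i.e.\ for which exactly one edge of $T$ lies in $\Edges(V,V^c)$ for the corresponding partition $(V,V^c)$: these are precisely the partitions obtained by cutting one edge of $T$, so they are in bijection with $\Edges(T)$ and depend on $T$ in an essential way. (For the remaining morphisms $f(T)$ has two or more edges, and by Example~\ref{vinestab} the compatible multidegree on $G$ is not unique, so no equation of the form \eqref{compcond} with right-hand side $\alpha(G)$ is available; this is also the content of the remark identifying \eqref{compcond} with \eqref{comp-contractions}.) If instead you impose \eqref{compcond} for \emph{every} admissible partition, the system becomes overdetermined and existence fails: take $\Gamma$ a triangle with three genus-$1$ vertices. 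Every partition is a singleton against its complement, all three contractions give the same $G\in T'_{g,0}$ (one vertex of genus $1$, one of genus $2$, joined by two edges), and your conditions then read the same equation at each vertex, forcing $\underline{\mathbf d}_\Gamma$ to be constant (with either choice of $v_1^G$); this is incompatible with $\sum_v\underline{\mathbf d}_\Gamma(v)=d-1$ for a general value of $\alpha(G)$. Consistently, the true solution does depend on $T$: for the spanning tree $\{e_{12},e_{23}\}$ it is $(\alpha,d-1-2\alpha,\alpha)$, while for $\{e_{12},e_{13}\}$ it is $(d-1-2\alpha,\alpha,\alpha)$.

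This error propagates through the rest of the argument. Your uniqueness step takes $V=\{v\}$ for an arbitrary vertex, but the partition $(\{v\},\Verts(\Gamma)\setminus\{v\})$ is induced by cutting an edge of $T$ only when $v$ is a leaf of $T$, so it does not prove uniqueness of $\underline{\mathbf d}_{\Gamma,T}$; and the consistency check in your part (b), which you defer to a telescoping argument ``as in \cite{kp2}, \cite{kp3}'', is exactly the step that cannot succeed under your reading. The paper's proof avoids all of this by fixing $T$ and using only the $|\Verts(\Gamma)|-1$ constraints indexed by $\Edges(T)$: each $e\in\Edges(T)$ contributes one affine-linear equation in the entries of $\underline{\mathbf d}_{\Gamma,T}$ (coming from $\alpha$ when the contraction lands in $T'_{g,n}$, and from invariance under the vertex swap when it lands in $T_{g,n}\setminus T'_{g,n}$), which together with the total-degree condition $\underline{\mathbf d}_{\Gamma,T}\in S^d_\Gamma(T)$ form a square system with invertible determinant, hence a unique solution. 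Your treatment of the symmetric two-vertex graphs and of the second statement is in the right spirit, but the proof cannot be repaired without abandoning the $T$-independence reduction.
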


Note that Equation~\eqref{compcond} is the same as compatibility for graph morphisms defined in Definition~\ref{combo-family}, Equation~\eqref{comp-contractions}.

\begin{proof} We only prove the first statement and leave the second to the reader. 

The statement is an extension of the proof given in \cite[Lemma 3.8]{kp2}. The main point of the proof is that for a fixed spanning tree $T \subseteq \Gamma$, contracting all but one edge of $T$ induces a bijection from $\Edges(T)$ to the set of morphisms from $\Gamma$ to some graph $G$ isomorphic to an element of $T_{g,n}$. The isomorphism is unique when $G \in T'_{g,n}$, and there are two isomorphisms when $G \in T_{g,n} \setminus T'_{g,n}$.


Each $e \in \Edges(T)$ induces a morphism $\Gamma \to \Gamma_e$ obtained by contracting all edges $\Edges(T) \setminus \{ e\}$.  There are two cases, depending on how each automorphism of $\Gamma_e$ acts on $\Verts(\Gamma_e)$:  (1) If each  acts trivially then there exists a unique isomorphism from $\Gamma_e$ to an element of $T'_{g,n}$. (2) if some act nontrivially, there are two isomorphisms from $\Gamma_e$ to an element of $T_{g,n} \setminus T'_{g,n}$.

Now we view $\underline{\bf d}_{\Gamma, T}$ as a vector with $|\Verts{T}|=|\Verts{\Gamma}|$ unknown entries. In Case~(1) we obtain an affine linear constraint among these unknowns by Equation~\eqref{compcond}. In Case~(2) a linear constraint is given by compatibility for the extra automorphism, which imposes that the sum of the values of $\underline{\bf d}_{\Gamma, T}$ on the vertices on one side of $e$ equals the sum of the values on the vertices on the other side of $e$.
Altogether, this gives $\left|\Edges(T)\right|=\left|\Verts(\Gamma)\right|-1$ affine linear constraints on the $\left|\Verts(\Gamma)\right|$ different entries of $\underline{\mathbf d}_{\Gamma, T}$. One more affine linear constraint is given by the fact that $\underline{\bf d}_{\Gamma, T} \in S^d_{\Gamma}(T) \subset \Z^{\Verts(\Gamma)}$. All in all, we obtain an affine linear system of $|\operatorname{Vert}(T)|$ equations in $|\operatorname{Vert}(T)|$ unknowns, which one can check has invertible determinant over $\Z$, hence  a unique solution for $\underline{\mathbf d}_{\Gamma, T}$.
\end{proof}

Lemmas~\ref{enoughontrees} and \ref{determinedover2} allow us to regard every degree~$d$ universal stability assignment of type $(g,n)$ as an element of $\Z^{T'_{g,n}}$. However, not all elements of $\Z^{T'_{g,n}}$ give rise to a universal stability assignment. For $\Gamma \in G_{g,n}$ it can happen that the collection on all spanning trees $\{\underline{\bf d}_{\Gamma,T}\}_{T}$ obtained from some $\alpha\in \Z^{T'_{g,n}}$ as in Lemma~\ref{determinedover2} is not the restriction of any stability assignment on the graph $\Gamma$ (see Lemma~\ref{enoughontrees}). 

\begin{corollary} \label{determinedon2}
    Let $\sigma, \tau$ be degree~$d$ universal stability assignments of type $(g,n)$. If $\sigma_G=\tau_G$ for all $G \in T'_{g,n}$, then $\sigma=\tau$. If $\sigma_G = \tau_G$ for all $G \in T'_{g,n} \setminus C_{g,n}$, then $\sigma$ and $\tau$ coincide on all curves that have no separating nodes.
\end{corollary}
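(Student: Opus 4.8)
The plan is to deduce Corollary~\ref{determinedon2} directly from the two preceding lemmas, \ref{enoughontrees} and \ref{determinedover2}, together with the defining compatibility condition of a universal stability assignment (Remark~\ref{Rem: univ stab}). The key observation is that a universal stability assignment $\sigma = \{\sigma_\Gamma\}_{\Gamma\in G_{g,n}}$ is, by Condition~(1) of Definition~\ref{finejacstab}, completely pinned down on spanning trees: for each $\Gamma\in G_{g,n}$ and each spanning tree $T\subseteq\Gamma$ the set $\sigma_\Gamma(T)$ is a single multidegree $\underline{\mathbf d}^\sigma_{\Gamma,T}\in S^d_\Gamma(T)$. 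Thus a universal stability assignment determines, and by Lemma~\ref{enoughontrees} is determined by, the collection $\{\underline{\mathbf d}^\sigma_{\Gamma,T}\}$ ranging over all spanning trees of all graphs in $G_{g,n}$.

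First I would record the following: for any morphism $f\colon\Gamma\to G$ in $G_{g,n}$ with $G\in T_{g,n}$ (a two-vertex loopless graph), $f$-compatibility of $\sigma_\Gamma$ with $\sigma_G$ forces, via Equation~\eqref{comp-contractions} applied to a spanning tree $T\subseteq\Gamma$ with $\gammazero=T$ (so there are no extra edges to count, since $f$ restricted to $T$ can be taken to contract all but one edge and $\Edges(\Gamma)\setminus\Edges(T)$ maps into the contracted part only when... — more carefully, one chooses $f$ to be the contraction of all but one edge of $T$), the relation
\[
\sum_{v\colon f(v)=v_1^G}\underline{\mathbf d}^\sigma_{\Gamma,T}(v) \;=\; \alpha^\sigma(G),
\]
where $\alpha^\sigma(G)$ is the first coordinate of the unique multidegree $\sigma_G(T_G)$ for $T_G$ a spanning tree of $G$ — and $\alpha^\sigma(G)$ depends only on $\sigma_G$, not on $\Gamma$ or $T$. (When $G\in T_{g,n}\setminus T'_{g,n}$, compatibility with the vertex-swapping automorphism forces $\alpha^\sigma(G)$ to be the same universal value for every $\sigma$; this is the reason $T'_{g,n}$, not $T_{g,n}$, appears.) In other words, $\sigma$ gives rise to a function $\alpha^\sigma\colon T'_{g,n}\to\Z$, and by the uniqueness statement in Lemma~\ref{determinedover2}, the collection $\{\underline{\mathbf d}^\sigma_{\Gamma,T}\}$ is the \emph{unique} collection satisfying \eqref{compcond} for $\alpha=\alpha^\sigma$.

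Now I would conclude as follows. Suppose $\sigma_G=\tau_G$ for all $G\in T'_{g,n}$. Then $\alpha^\sigma=\alpha^\tau$, so by the uniqueness in Lemma~\ref{determinedover2} the two collections $\{\underline{\mathbf d}^\sigma_{\Gamma,T}\}$ and $\{\underline{\mathbf d}^\tau_{\Gamma,T}\}$ coincide for all $\Gamma\in G_{g,n}$ and all spanning trees $T\subseteq\Gamma$. Then by Lemma~\ref{enoughontrees}, applied graph by graph, $\sigma_\Gamma=\tau_\Gamma$ for every $\Gamma\in G_{g,n}$, i.e. $\sigma=\tau$. For the second assertion, if $\sigma_G=\tau_G$ only for $G\in T'_{g,n}\setminus C_{g,n}$, then $\alpha^\sigma$ and $\alpha^\tau$ agree on $T'_{g,n}\setminus C_{g,n}$; applying the second half of Lemma~\ref{determinedover2} (which builds the collection on spanning trees of graphs in $G_{g,n}^{\operatorname{NS}}$ from a function on $T'_{g,n}\setminus C_{g,n}$, using that for a graph without separating edges no contraction lands in $C_{g,n}$ — a one-edge graph has a separating edge, so any $f\colon\Gamma\to G$ with $\Gamma\in G_{g,n}^{\operatorname{NS}}$ necessarily has $G\notin C_{g,n}$), we get that $\{\underline{\mathbf d}^\sigma_{\Gamma,T}\}=\{\underline{\mathbf d}^\tau_{\Gamma,T}\}$ for all $\Gamma\in G_{g,n}^{\operatorname{NS}}$, hence $\sigma_\Gamma=\tau_\Gamma$ on all such graphs by Lemma~\ref{enoughontrees}; since a curve has no separating nodes exactly when its dual graph lies in $G_{g,n}^{\operatorname{NS}}$, this is the claim.

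The main obstacle I anticipate is the bookkeeping in the first step — checking that $f$-compatibility along a contraction of all-but-one edge of a spanning tree really does reduce to exactly Equation~\eqref{compcond} with no stray edge-counting terms, and that the value it produces depends only on the target graph $G$ (and on $\sigma_G$) and not on the path taken to it. This is essentially the content already extracted in the proof of Lemma~\ref{determinedover2} via the stated bijection between $\Edges(T)$ and morphisms $\Gamma\to G$, $G\in T_{g,n}$, so I would just cite that bijection and the discussion of the two automorphism cases there; the only genuinely new point is observing that $\alpha^\sigma(G)$ is well-defined (independent of $\Gamma$ and $T$), which is immediate once one notes $\sigma_G$ itself assigns a unique multidegree to each spanning tree of $G$ and all spanning trees of a graph in $T_{g,n}$ are single edges related by the compatibility constraints within $\sigma_G$. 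Everything else is a formal consequence of the uniqueness clauses in the two lemmas.
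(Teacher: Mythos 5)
Your proposal is correct and follows exactly the route the paper takes: its proof of Corollary~\ref{determinedon2} is the one-line statement that it follows immediately from Lemmas~\ref{enoughontrees} and~\ref{determinedover2}, and your write-up simply makes explicit the deduction (spanning-tree data determine $\sigma_\Gamma$ by Lemma~\ref{enoughontrees}; the spanning-tree data are pinned down by the values on $T'_{g,n}$, resp.\ $T'_{g,n}\setminus C_{g,n}$, by the uniqueness clause of Lemma~\ref{determinedover2}, with the symmetric graphs handled by automorphism-invariance and the $C_{g,n}$ targets excluded for $\Gamma\in G_{g,n}^{\operatorname{NS}}$ since such a contraction would produce a separating edge). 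The bookkeeping subtlety you flag about matching Equation~\eqref{compcond} with Equation~\eqref{comp-contractions} is inherited from the paper's own statement of Lemma~\ref{determinedover2} and does not affect the validity of the deduction, since any edge-counting correction is purely combinatorial and independent of the stability assignment.
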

\begin{proof}
    Follows immediately from Lemmas~\ref{enoughontrees} and \ref{determinedover2}.
\end{proof}

\subsection{Second part: when $n=0$,  over each vine curve with at least $2$ nodes there is at most \texorpdfstring{$1$}{1} stability assignment that extends to a universal stability} In this subsection we fix $g \geq 2$ and $n=0$. The main result here is Corollary~\ref{twovertexgraph}. 

We first need some combinatorial preparation. Let  $\GSym_g$ be the trivalent graph with $2g-2$ vertices $v_1, \ldots, v_{2g-2}$ of genus $0$, where each vertex $v_i$ is connected to $v_{i-1}$, $v_{i+1}$  and $v_{i+g-1}$ (indices should be considered modulo $2g-2$).
For $i\in\Z/(2g-2)\Z$ and $j=1,\dots,g-1$, we shall denote by $e_i$ the edge joining $v_i$ and $v_{i+1}$ and by $e_j'$ the edge joining $v_j$ and $v_{j+g-1}$. (See Picture~\ref{fig:gsigmag}).
\begin{figure}
  \includegraphics[scale=0.2]{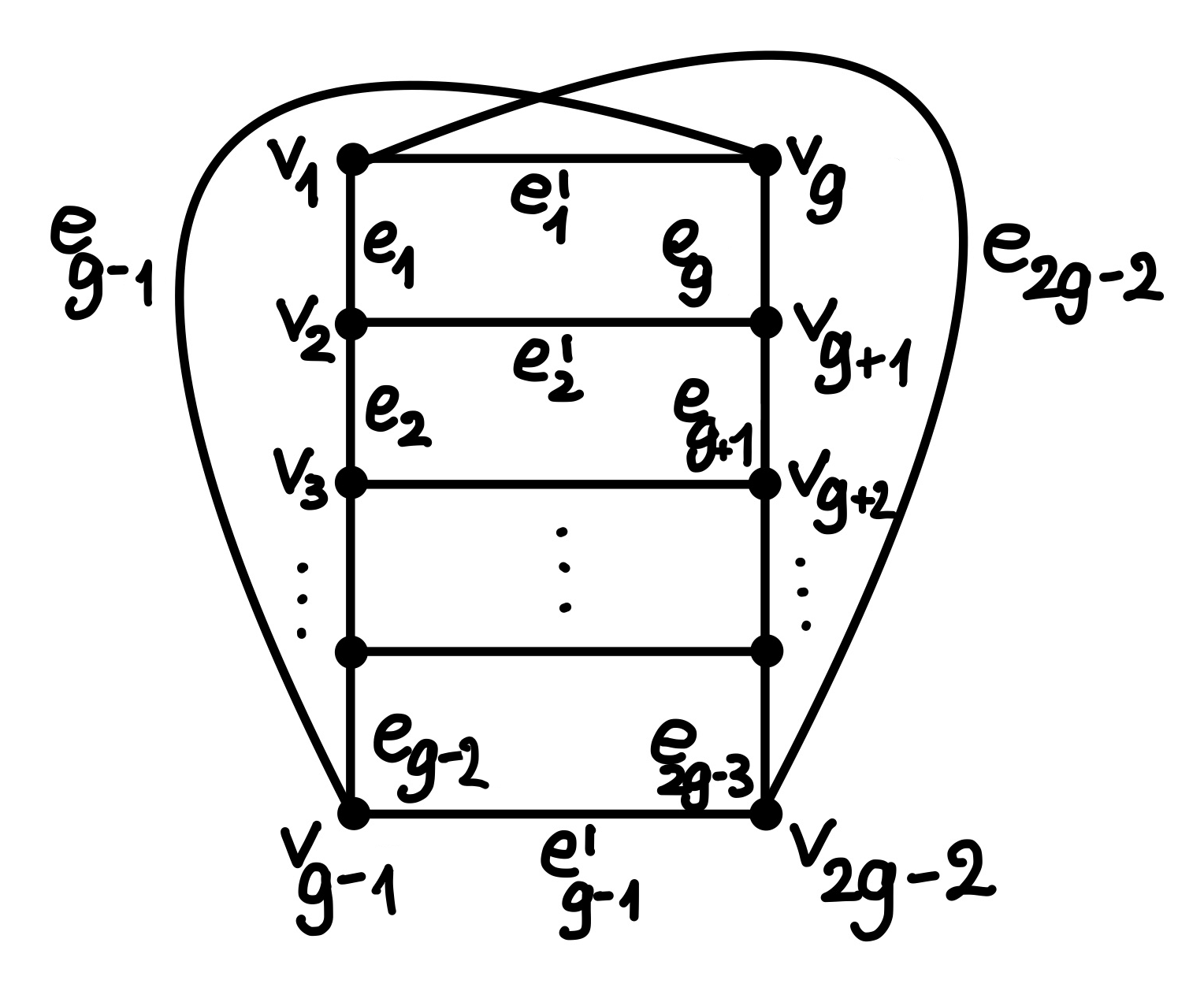}
  \caption{$\GSym_g$ \label{fig:gsigmag}}
  \end{figure}
Let $\Gamma_g \subset \GSym_g$ be the maximal $1$-cycle consisting of all edges of the form $e_i$.
\begin{lemma} \label{verysymmetric}  For every $d \in \Z$ there is at most one assignment \[\stab_{\GSym_g}(\Gamma_g)\subset S^d_{\GSym_g}(\Gamma_g)\] that satisfies the two conditions of Definition~ \ref{finejacstab}, and that extends to a degree $d$  universal stability assignment of type $(g,0)$. When such an assignment exists, the integers $d-g+1$ and $2g-2$ are necessarily coprime.
\end{lemma}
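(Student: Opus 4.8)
The plan is to exploit the high symmetry of the graph $\GSym_g$ together with the compatibility constraints forced by a universal stability assignment. The graph $\GSym_g$ has a large automorphism group: the cyclic rotation sending $v_i \mapsto v_{i+1}$ is an automorphism (it permutes the edges $e_i$ cyclically and the edges $e_j'$ cyclically), and there is a reflection as well. By Remark~\ref{autdual} (applied to the stratum of $\overline{\mathcal{M}}_{g,0}$ whose dual graph is $\GSym_g$), the associated assignment $\sigma_{\GSym_g}$ of any universal stability assignment must be invariant under $\operatorname{Aut}(\GSym_g)$. First I would record precisely which permutations of $\Verts(\GSym_g)$ are realized by automorphisms, and in particular that the rotation $\rho\colon v_i\mapsto v_{i+1}$ is one of them, so that the unique multidegree $\underline{\mathbf d}_{\Gamma_g}:=\sigma_{\GSym_g}(\Gamma_g)$ (unique by Condition~(1) of Definition~\ref{finejacstab}, since $\Gamma_g$ is a spanning tree) satisfies $\underline{\mathbf d}_{\Gamma_g}\circ\rho = \underline{\mathbf d}_{\Gamma_g}$. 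This forces $\underline{\mathbf d}_{\Gamma_g}(v_i)$ to be independent of $i$, say equal to a constant $c$, so $\sum_i \underline{\mathbf d}_{\Gamma_g}(v_i) = (2g-2)c$. On the other hand, by the definition of $S^d_{\GSym_g}(\Gamma_g)$ in \eqref{sgamma}, this sum equals $d - n(\Gamma_g)$, where $n(\Gamma_g) = |\Edges(\GSym_g)| - |\Edges(\Gamma_g)| = g-1$ (there are $2g-2$ edges of type $e_i$, $g-1$ edges of type $e_j'$, total $3g-3$, minus the $2g-2$ edges of the cycle). Hence $(2g-2)c = d - (g-1)$, i.e. $d - g + 1 \equiv 0 \pmod{2g-2}$ would follow — but wait, that is too strong; the point is subtler.

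**Correcting the divisibility argument.**

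Let me reconsider: the constraint $\sum_i \underline{\mathbf d}_{\Gamma_g}(v_i) = d - (g-1)$ combined with rotation-invariance of $\underline{\mathbf d}_{\Gamma_g}$ forces $2g-2 \mid d - g + 1$, which contradicts the desired conclusion that $\gcd(d-g+1,2g-2)=1$ unless $2g-2=1$. So the rotation $\rho$ cannot be an automorphism of $\GSym_g$ in the relevant sense — I must instead identify which symmetries genuinely lie in $\operatorname{Aut}(\GSym_g)$. The correct picture: the edges $e_j'$ (the "long chords") connect $v_j$ to $v_{j+g-1}$, and this pattern is preserved only by a smaller group — typically the dihedral group generated by the single rotation $v_i\mapsto v_{i+1}$ is NOT valence-preserving unless one checks the chord structure carefully, whereas the "antipodal" map $v_i \mapsto v_{i+g-1}$ swaps each chord's endpoints and does preserve the structure. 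I would carefully determine $\operatorname{Aut}(\GSym_g)$, then use invariance of $\sigma_{\GSym_g}(\Gamma_g)$ under this group to pin down $\underline{\mathbf d}_{\Gamma_g}$ up to finitely many possibilities, and then apply Condition~(2) of Definition~\ref{finejacstab} (the minimal-complete-set-of-representatives condition) together with compatibility under the graph morphisms that contract subsets of $\{e_j'\}$ — these morphisms take $\GSym_g$ to vine-type and smaller graphs — to show that at most one choice of $\underline{\mathbf d}_{\Gamma_g}$ survives, and that its existence forces the coprimality $\gcd(d-g+1,2g-2)=1$. Concretely, contracting all the chords $e_j'$ collapses $\GSym_g/\{e_j'\}$ onto a smaller graph in which the image of $\Gamma_g$ is constrained, and tracing through \eqref{comp-contractions} yields a system of congruences on the $\underline{\mathbf d}_{\Gamma_g}(v_i)$ whose solvability is exactly the coprimality statement.

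**Executing via chip-firing on the cycle.**

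In more detail: restricting attention to the sub-1-cycle $\Gamma_g$, its twister group $\tw(\Gamma_g)$ is the standard chip-firing group of a cycle on $2g-2$ vertices, and $J^0(\Gamma_g) \cong \Z/(2g-2)\Z$. By Condition~(2), $\sigma_{\GSym_g}(\Gamma_g)$ — no wait, $\Gamma_g$ is a spanning tree of $\GSym_g$? No: $\Gamma_g$ is a cycle with $2g-2$ vertices and $2g-2$ edges, so it is NOT a tree, it has $b_1 = 1$. So $|\sigma_{\GSym_g}(\Gamma_g)| = c(\Gamma_g) = 2g-2$ by Remark~\ref{complexity}. These $2g-2$ multidegrees form a minimal complete set of representatives for the $\Z/(2g-2)\Z$-action. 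The key step is then: universal compatibility relates $\sigma_{\GSym_g}(\Gamma_g)$ to the stability assignments on all vine curves obtained by contracting the chords $e_j'$ one at a time and on the graph $\Gamma_g$ viewed inside $\GSym_g$, and the automorphism group of $\GSym_g$ acts on this set of $2g-2$ multidegrees; using that the rotation-type symmetries act on $J^0(\Gamma_g)=\Z/(2g-2)\Z$ by translation, one sees that a symmetric minimal complete set of representatives is uniquely determined, and its "base point" is forced by the chord-contraction congruences to exist precisely when $d-g+1$ is invertible mod $2g-2$. I would finish by combining these: the $\operatorname{Aut}(\GSym_g)$-invariance plus Lemma~\ref{enoughontrees}-style rigidity leaves at most one candidate $\sigma_{\GSym_g}(\Gamma_g)$, and writing out one of the chord-contraction compatibility equations modulo $2g-2$ shows a solution exists only when $\gcd(d-g+1, 2g-2)=1$.

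**The main obstacle.**

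The hard part will be the explicit combinatorial bookkeeping of $\operatorname{Aut}(\GSym_g)$ and its action on the orbit structure of the chip-firing group of $\Gamma_g$: one needs to see precisely that the symmetry group acts on $J^0(\Gamma_g) \cong \Z/(2g-2)\Z$ by a transitive-enough group of affine transformations (translations and a sign flip), so that a symmetric complete set of representatives of size $2g-2$ cannot exist unless the orbit is a single full coset, forcing the divisibility. Equally delicate is extracting the exact congruence from the chord-contraction morphisms $f\colon \GSym_g \to G$ with $G$ a vine graph, and checking that the number $n(\Gamma_g)=g-1$ of contracted/non-tree edges interacts with \eqref{comp-contractions} to produce the shift by $d-g+1$ rather than some other quantity — a sign or off-by-one error here would change the arithmetic conclusion. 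I expect the cleanest route is to reduce everything to a single well-chosen morphism (contracting all of $\Gamma_g$ except one edge, which realizes $\GSym_g \to$ a vine curve of type $g-1$ or so) and apply Example~\ref{vinestab} there, translating the unique integer $\lambda$ of that example back through compatibility.
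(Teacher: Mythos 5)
Your proposal goes off the rails at three points, and the underlying cause is that you never bring in the one ingredient the paper's proof actually rests on.

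First, your claim that the cyclic rotation $v_i\mapsto v_{i+1}$ is \emph{not} an automorphism of $\GSym_g$ is false. The chords $e_j'$ form the antipodal perfect matching on the $(2g-2)$-cycle (each $v_j$ is joined to $v_{j+g-1}$, and $2(g-1)=2g-2$), so the rotation sends the chord $\{v_j,v_{j+g-1}\}$ to the chord $\{v_{j+1},v_{j+g}\}$ and is a genuine automorphism preserving $\Gamma_g$. The apparent contradiction that led you to discard it ($2g-2\mid d-g+1$) came from treating $\sigma_{\GSym_g}(\Gamma_g)$ as a single rotation-fixed multidegree; but $\Gamma_g$ is a cycle, not a spanning tree, so $\sigma_{\GSym_g}(\Gamma_g)$ is a set of $2g-2$ multidegrees that the rotation merely \emph{permutes}. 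You eventually notice that $\Gamma_g$ is not a tree, but you never retract the wrong conclusion about $\operatorname{Aut}(\GSym_g)$, and the paper's argument depends precisely on the rotation being available.

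Second, your uniqueness mechanism --- ``a symmetric minimal complete set of representatives is uniquely determined'' --- is false as stated. Already on the $2$-cycle in total degree $1$ (the case $g=2$), both $\{(1,0),(0,1)\}$ and $\{(2,-1),(-1,2)\}$ are rotation- and reflection-invariant minimal complete sets of representatives for the twister action, so symmetry alone cannot single out the assignment. The paper closes this gap by invoking the genus-$1$ classification (Lemma~\ref{lemma:genus1}, Example~\ref{genus1}, i.e.\ \cite[Section~3]{paganitommasi}): since $b_1(\Gamma_g)=1$, the restriction of the assignment to $\Gamma_g$ is necessarily of the form $\sigma_\phi$ for a numerical polarization $\phi$ that is canonically recovered as the \emph{average} of the $2g-2$ admissible multidegrees. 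Rotation-invariance of the set makes this average constant, the total-degree constraint forces $\phi(v_i)=\frac{d-g+1}{2g-2}$, and nondegeneracy of this constant polarization is exactly $\gcd(d-g+1,2g-2)=1$. Your proposal contains no substitute for this step; the ``chord-contraction congruences'' you gesture at for the coprimality are not worked out (and note that contracting all of $\Gamma_g$ except one edge collapses $\GSym_g$ to a single vertex, not a vine graph). A purely combinatorial route via periodicity of rotation-orbits in $J^{d-g+1}(\Gamma_g)\cong\Z/(2g-2)\Z$ could in principle recover the coprimality statement, but it would not recover uniqueness, and in any case you do not supply it.
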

\begin{proof}
  Assume that $\stab_{\GSym_g}$ is the restriction to $\GSym_g$ of some  universal stability $\stab$. Since $\Gamma_g$ is a graph of genus $1$, we can apply the results from \cite[Section~3]{paganitommasi} to describe the assignment $\stab_{\GSym_g}(\Gamma_g)$. Namely, in genus $1$ it is known that all such assignments are induced by a polarization $\phi$ whose value $\phi(v_i)$ at each vertex $v_i$ is given by the average of the $2g-2$ admissible multidegrees in $\stab_{\GSym_g}(\Gamma_g)$. In particular, the polarization $\phi$ should be invariant under the automorphisms of $\GSym_g$ preserving $\Gamma_g$, such as the cyclic automorphism mapping $e_i$ to $e_{i+1}$ for all~$i$. From this we deduce that all $\phi(v_i)$ are equal, and since we have obtained $\Gamma_g$ from $\GSym_g$ by removing the $g-1$ edges $e_i'$, we have $\sum_{i\in\Z/(2g-2)\Z}\phi_i=d-g+1$. It follows that there exists at most $1$ assignment, and that this assignment should be the one induced by $\phi(v_i)=\frac{d-g+1}{2g-2}$ for all $i=1, \ldots, 2g-2$. Then the claim follows from the fact that this polarization is nondegenerate if and only if $d-g+1$ and $2g-2$ are coprime.
  \end{proof}

We can now prove the following.
\begin{corollary} \label{twovertexgraph} If $G\in T_{g,0} \setminus C_{g,0} \subseteq G_{g,0}$ is a loopless graph with $2$ vertices and at least $2$ edges, there exists at most one degree $d$  stability assignment for the graph $G$ that extends to a degree~$d$  universal  stability assignment. 
\end{corollary}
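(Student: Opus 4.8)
The plan is as follows. Fix a degree~$d$ universal stability assignment $\sigma$ of type $(g,0)$; it suffices to show that its restriction $\sigma_G$ to $G$ is uniquely determined by the axioms, independently of $\sigma$. Writing $\Verts(G)=\{v_1,v_2\}$ and letting $t\geq 2$ be the number of edges of $G$, part~(2) of Example~\ref{vinestab} shows that $\sigma_G$ is specified by a single integer $\lambda_G\in\Z$, the common first coordinate of the unique multidegree in $\sigma_G(\Gamma_i)$ over all spanning trees $\Gamma_i$ of $G$. I would first dispose of the \emph{symmetric} case, where the two vertices of $G$ carry equal genus weights so that $G$ admits an automorphism $\tau$ exchanging $v_1$ and $v_2$: by Remark~\ref{Rem: univ stab} the universal stability $\sigma$ must be $\tau$-invariant, and applying $\tau$ to $\sigma_G(G)$ transposes the two coordinates of every multidegree in the list of Example~\ref{vinestab}, which forces $\lambda_G=(d-t+1)/2$. (In particular no stability assignment on $G$ extends to a universal one when $d-t$ is even, which is still consistent with ``at most one''.)

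For the \emph{non-symmetric} case ($G\in T'_{g,0}\setminus C_{g,0}$) the point is to transport to $G$ the uniqueness on the genus~$1$ subgraph $\Gamma_g\subseteq\GSym_g$ provided by Lemma~\ref{verysymmetric}. I would realise $G$ as a contraction $h\colon\GSym_g\to G$: choosing a partition $\Verts(\GSym_g)=V_1\sqcup V_2$ into unions of arcs of the defining cycle of $\GSym_g$ with $\Gamma(V_1),\Gamma(V_2)$ connected and $b_1(\Gamma(V_i))$ equal to the genus $g_i$ of $v_i$ (a short edge count then forces exactly $t$ edges between $V_1$ and $V_2$), let $h$ contract all edges inside $V_1$ and inside $V_2$. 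Under $h$ the cycle $\Gamma_g$ maps onto the connected spanning subgraph $G':=h(\Gamma_g)\subseteq G$ consisting of the cycle edges that cross between $V_1$ and $V_2$; this $G'$ is a sub-``vine'' of $G$ with some number $m\geq 2$ of edges, so by conditions~(1) and~(2) of Definition~\ref{finejacstab} the set $\sigma_G(G')$ consists of exactly $c(G')=m$ multidegrees whose first coordinates are $\lambda_G,\lambda_G+1,\dots,\lambda_G+m-1$. By Lemma~\ref{verysymmetric} the set $\sigma_{\GSym_g}(\Gamma_g)$ is uniquely determined---it is the genus~$1$ (break-divisor) set associated with the polarization whose value at each vertex is $\frac{d-g+1}{2g-2}$---and $h$-compatibility (Equation~\eqref{comp-contractions}) maps $\sigma_{\GSym_g}(\Gamma_g)$ into $\sigma_G(G')$, sending $\underline{\mathbf d}$ to the multidegree whose first coordinate is $\sum_{v\in V_1}\underline{\mathbf d}(v)$ increased by the number of chords of $\GSym_g$ contracted to $v_1$. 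Computing these values from the explicit description of $\sigma_{\GSym_g}(\Gamma_g)$, one checks that the image exhausts all of $\sigma_G(G')$, so $\sigma_G(G')$---and hence $\lambda_G$---is determined.

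Two loose ends remain. When $t=2$ the graph $G$ satisfies $b_1(\Gamma(G))=1$, so independently of the vertex genera all of its stability assignments are described by the genus~$1$ analysis of \cite[Section~3]{paganitommasi} (see Example~\ref{genus1} and Lemma~\ref{lemma:genus1}); here I would again produce a contraction onto $G$ from a graph with a cyclic automorphism carrying a genus~$1$ subgraph covered by Lemma~\ref{verysymmetric}, or argue directly that the induced numerical polarization on $G$ is pinned down by compatibility. Moreover, for certain triples $(g_1,g_2,t)$ the partition of $\GSym_g$ above does not exist (the number of crossing chords forced by the arc structure need not match $t$); for those $G$ I would introduce an auxiliary graph with three vertices joined by three bunches of parallel edges, admitting contractions onto $G$ and onto vine curves already treated, and propagate the constraint through these morphisms, invoking Corollary~\ref{determinedon2} to see that controlling the vine curves one can reach is enough.

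The main obstacle is precisely the combinatorics of this last part: determining exactly which vine curves with $t$ edges occur as contractions of $\GSym_g$, carrying out the explicit chip-firing computation showing that the transported multidegrees fill up $\sigma_G(G')$, and designing the auxiliary three-vertex graphs together with the book-keeping that shows the resulting system of compatibility constraints admits a unique $\lambda_G$. Everything else (the symmetric case, the $t=2$ case via the genus~$1$ classification, and the reduction bundled into Corollary~\ref{determinedon2}) is essentially formal once Lemmas~\ref{verysymmetric} and \ref{enoughontrees} are available.
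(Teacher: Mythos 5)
Your overall strategy is the same as the paper's: anchor everything at the graph $\GSym_g$ via Lemma~\ref{verysymmetric} and then transport the unique assignment on $\Gamma_g\subset\GSym_g$ to the two-vertex graphs by compatibility with contractions. The pieces you actually carry out are fine: the symmetric case via automorphism-invariance (forcing $\lambda_G=(d-t+1)/2$) is correct, and the single-arc contractions of $\GSym_g$ do pin down the vine curves $\Gamma(t,0,g+1-t)$ with a genus-$0$ vertex and $t\geq 3$, with the "image exhausts $\sigma_G(G')$" computation going through because $k\tfrac{d-g+1}{2g-2}$ is never an integer for $0<k<2g-2$ when $\gcd(d-g+1,2g-2)=1$.

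The genuine gap is the step you yourself flag as "the main obstacle": reaching the vine curves $\Gamma(t,i,j)$ with both genera positive, and the type-$2$ vine curves. This is not a loose end that can be deferred --- it is the heart of the proof. Since $\GSym_g$ is $3$-edge-connected, no vine curve of type $2$ arises as a contraction of it at all, and for general $(i,j,t)$ with $i,j>0$ the required partition of $\Verts(\GSym_g)$ typically does not exist either; so some intermediate graph is unavoidable, and "I would introduce an auxiliary graph with three vertices\dots and propagate the constraint" does not yet constitute an argument. The paper fills exactly this hole using the two compositions of contractions from \cite[Lemma 3.9]{kp3}: first the genus-$0$-vertex case $\Gamma(t,i,0)$ is deduced from $\Gamma_g\subset\GSym_g$, and then an explicit four-vertex auxiliary graph $G'$ (with a $4$-cycle spanning subgraph $G_0'$) is used to reduce $\Gamma(t,i,j)$ to $\Gamma(i-j+2,t+2j-2,0)$; the key computation there determines $\sigma_{G'}(G_0')$ from a single integer $\beta$ whose existence uses the parity of $d+1-t-i+j$, which in turn comes from $\gcd(d-g+1,2g-2)=1$. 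Until you specify your auxiliary graphs, exhibit the contractions onto both a previously-determined vine curve and the target $\Gamma(t,i,j)$, and verify that the resulting compatibility constraints determine $\lambda_G$ uniquely (including the $t=2$ case), the proof is incomplete precisely at its crucial step.
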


In the proof we will denote by $\Gamma(t,i,j)$ the object of $G_{g}$ that consists of two vertices of genus $i$ and $j$ connected by $t$ edges. (Note that $g=i+j+t-1$).
\begin{proof}
The claim is obtained by proving that the value of an assignment on vine curves of type $t \geq 2$ can be  obtained, using compatibility with graph morphisms (Definition~\ref{combo-family}), from the assignment calculated on $\Gamma_g \subset \GSym_g$ (described in Lemma~\ref{verysymmetric}). Recall that a degree~$d$ stability assignment over a graph with $2$ vertices has the same value on all spanning trees and is uniquely determined by this value (Example~\ref{vinestab}). 


First we prove the claim when $G = \Gamma(t, i,0)$ has one vertex of genus zero. Apply Lemma~\ref{verysymmetric} to deduce the uniqueness of an assignment $\stab_{\GSym_g}(\Gamma_g)$ that satisfies the two conditions of Definition~ \ref{finejacstab} and automorphism-invariance. Then apply to $\Gamma_g$ the first composition of contractions defined in \cite[Lemma 3.9]{kp3}.

The statement for an arbitrary graph $G= \Gamma(t, i, j)$ is obtained by applying the second set of contractions used in the proof of \cite[Lemma 3.9]{kp3}, to deduce the value of $\stab$ on $\Gamma(t, i, j)$ from the value of $\stab$ on $G=\Gamma( i-j+2, t+2j-2, 0)$. More precisely, consider the graph $G'$ with $4$ vertices $w_1,w_2,w_3,w_4$ defined in the proof of \cite[Lemma 3.9]{kp3}, and its spanning subgraph ${G}_0':=w_1-w_2-w_4-w_3-w_1$. Let $\alpha \in \Z$ be defined by the condition \[\stab_{\Gamma( i-j+2, t+2j-2, 0)}({G}_0)= \Set{(d-i+j -1-\alpha, \alpha) }\] for ${G}_0$ a spanning tree of $\Gamma( i-j+2, t+2j-2, 0)$. By combining the axioms of a degree~$d$ stability with compatibility with graph morphisms (Definition~\ref{combo-family}), we find out
\[
\stab_{G'}({G}_0')= \Set{(\beta+1, \beta, \alpha, \alpha), (\beta, \beta+1, \alpha, \alpha), (\beta, \beta, \alpha+1, \alpha), (\beta,\beta, \alpha, \alpha+1) }
\]
for the unique $\beta \in \Z$ such that $2\alpha+ 2 \beta = d+1 - t - i + j$ (observe that $d+1-t - i + j$ must be even since so is $d-g+1$, because by Lemma~\ref{verysymmetric} it is coprime with $2g-2$, and by using $g=t + i + j -1$). From Condition~(1) of Definition~\ref{finejacstab} we deduce that the unique assignment on the spanning tree ${G}_0'' \subset {G}_0'$ defined by $w_2- w_1 -w_3 - w_4$ equals
\[
\stab_{G'}({G}_0'') = \Set{(\beta, \beta, \alpha, \alpha) },
\]
and applying the second set of contractions used in the proof of \cite[Lemma 3.9]{kp3} we deduce the value of $\stab$ on  the graph $G= \Gamma(t, i, j)$.
\end{proof}

\subsection{Third part: Conclusion}

In this subsection, we fix $g \geq 2$ and $n=0$.

Note that if $\gcd(d-g+1,2g-2)=1$, the universal canonical numerical polarization  $\Phi_{\operatorname{can}}^d$  defined in Remark~\ref{Rem: phican} is nondegenerate, and so there is a  canonical universal stability assignment  $\stab_{\Phi^d_{\operatorname{can}}}$ of type $(g,0)$ defined via Definition~\ref{familyphistab}. More stability assignments can be obtained by modifying the stable bidegrees over the boundary divisors of $\overline{\mathcal{M}}_g$ having two components (the vine curves whose dual graph, borrowing from the notation of the previous subsection, has the form $G=\Gamma(1,i,g-i)$ for $1 \leq i<g/2$, the first vertex having genus $i$ and the second having genus $g-i$).

\begin{proposition} \label{rem: canonical}  Assume $\gcd(d-g+1,2g-2)=1$ and fix an integer $\alpha_i$ for each $1 \leq i <\lfloor \frac{g-1}{2} \rfloor$.  Then there exists a   stability assignment of the form $\stab_{\Phi}$ for $\Phi$ a nondegenerate element of $V_g^d$ (as in Definition~\ref{familyphistab}) such that $\Phi(G(1,i,g-i))=(\alpha_i, d-\alpha_i)$ for all $i$.  
\end{proposition}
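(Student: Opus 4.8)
The plan is to obtain $\Phi$ by perturbing the universal canonical polarization $\Phi^d_{\operatorname{can}}$ of Remark~\ref{Rem: phican}; recall it is nondegenerate precisely because $\gcd(d-g+1,2g-2)=1$, and that over the graph $G(1,i,g-i)$ it takes the value $\left(\tfrac{d(2i-1)}{2g-2},\tfrac{d(2g-2i-1)}{2g-2}\right)$, which is generally not $(\alpha_i,d-\alpha_i)$. To correct this I would first produce some $\Phi'\in V_g^d$ with $\Phi'(G(1,i,g-i))=(\alpha_i,d-\alpha_i)$ for every $i$ in the range. I would work in the real vector space $V_g^0$ of degree-zero universal polarizations, over which $V_g^d$ is an affine space, and for each such $i$ take $\psi_{\delta_i}\in V_g^0$ to be the twist along the boundary divisor $\delta_i\subset\Mb_g$ whose generic point is a vine curve of type $G(1,i,g-i)$ (as in \cite{kp3}): its defining feature is that it is supported on the $\delta_i$-part of the boundary, so that over a one-edge vine curve $G(1,j,g-j)$ it vanishes if $j\ne i$ and equals $(\lambda_i,-\lambda_i)$ with $\lambda_i=\pm1$ if $j=i$. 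Solving $\Phi^d_{\operatorname{can}}(G(1,i,g-i))+c_i\psi_{\delta_i}(G(1,i,g-i))=(\alpha_i,d-\alpha_i)$ for rational $c_i$ and setting $\Phi':=\Phi^d_{\operatorname{can}}+\sum_ic_i\psi_{\delta_i}\in V_g^d$, one obtains the prescribed values over the $G(1,i,g-i)$ in the range, while $\Phi'$ still coincides with $\Phi^d_{\operatorname{can}}$ over every one-edge vine curve outside the range; in particular, when $g$ is even, over the symmetric graph $G(1,g/2,g/2)$ it takes the value $(d/2,d/2)$ with $d/2\in\mathbf Z$, since $d\equiv g\pmod2$ (again by coprimality).

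The second step is to adjust $\Phi'$ to a nondegenerate polarization without disturbing these boundary values. If the index set is empty we are done with $\Phi=\Phi^d_{\operatorname{can}}$, so assume it is nonempty. Let $W\subseteq V_g^0$ be the subspace of degree-zero universal polarizations vanishing over $G(1,i,g-i)$ for every $i$ in the range; every element of the affine subspace $\Phi'+W\subseteq V_g^d$ has the prescribed values over these graphs. Because the first Betti number of a graph is preserved by every morphism in $G_g$, the compatibility constraints cutting out $V_g^0$ decouple according to $b_1$; hence $W$ contains the entire $b_1\ge1$ part of $V_g^0$ (together with the one-edge vine curves outside the range), and in particular $W\ne0$ whenever the index set is nonempty. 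The degenerate locus in $V_g^d$ is a locally finite union of rational affine hyperplanes (walls), so the positive-dimensional affine subspace $\Phi'+W$ meets the nondegenerate locus unless it is entirely contained in a single wall. A wall containing $\Phi'+W$ must have linear part containing $W$, and by the $b_1$-decoupling this forces its defining functional $\Phi\mapsto\sum_{v\in V}\phi_\Gamma(v)$ to factor through the restrictions to one-edge vine curves, i.e.\ $\Gamma$ must be a genus-weighted tree. But over a tree one has $|\Edges(V,V^c)|=1$ whenever the induced subgraphs on $V$ and $V^c$ are connected, so the relevant walls lie at half-integers, whereas $\sum_{v\in V}\Phi'_\Gamma(v)$ is an integer: it is a $\mathbf Z$-linear combination of the $\alpha_j$ and of $d/2$, all integers. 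Therefore $\Phi'+W$ is not contained in any wall and contains a nondegenerate $\Phi\in V_g^d$; by Proposition~\ref{OSisfine} applied fibrewise, together with the strong (hence weak) compatibility of $\Phi$, the collection $\stab_\Phi$ is a family of degree-$d$ stability assignments of the required form.

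The main obstacle is the second step — specifically, making precise the claim that the only walls that can contain a fibre $\Phi'+W$ come from trees. This needs the polarization analogue of Corollary~\ref{determinedon2} (a universal polarization is determined by its restrictions to the two-vertex loopless graphs), refined by the invariance of $b_1$ under graph morphisms, so that $W$ genuinely contains all the "higher" directions of $V_g^0$ and all remaining walls are transverse to $W$. The coprimality hypothesis enters twice: directly, to make $\Phi^d_{\operatorname{can}}$ a legitimate starting point and to know the wall arrangement is not all of $V_g^d$, and through the parity $d\equiv g\pmod2$, which is exactly what forces $\Phi'$ off the half-integer walls over trees.
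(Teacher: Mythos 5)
Your step 1 is fine, and in fact closer to the substance of the matter than you may realize, but your step 2 contains a genuine gap. The key false claim is that a wall containing $\Phi'+W$ must come from a (genus-weighted) tree, which you justify by asserting that $b_1$ is preserved by every morphism in $G_{g,0}$ and that the constraints cutting out $V_g^0$ therefore ``decouple according to $b_1$'', so that $W$ contains all the higher directions. First, $b_1$ is \emph{not} preserved by graph contractions (contracting a loop lowers $b_1$ and raises the genus of the vertex). More importantly, the strong-compatibility relations tie the value of a universal polarization at a multi-edge vine curve to its values on graphs with more vertices, and these relations can pin that value down completely: for example, in $V_5^0$ the triangle with vertex genera $(1,1,2)$ contracts in three ways to two-vertex graphs and forces $\psi_{G(2,1,3)}=0$ for \emph{every} degree-zero universal polarization. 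So $W$ does not surject onto the coordinates at two-edge (or $t$-edge, $t\geq 2$) vine curves; the corresponding wall functionals $\Phi\mapsto\sum_{v\in V}\phi_\Gamma(v)$ with $|\Edges(V,V^c)|\geq 2$ are constant on $\Phi'+W$ (often on all of $V_g^d$), and your argument gives no reason why those constants avoid the walls $\tfrac{|\Edges(V,V^c)|}{2}+\mathbf{Z}$. Your dichotomy ``either the wall is transverse to $W$ or it comes from a tree'' is therefore false, and the perturbation step does not close.

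Two further points. (i) Even restricted to trees, you only treat subsets $V$ with $\Gamma(V)$ and $\Gamma(V^c)$ connected, whereas Definition~\ref{OS- stable} quantifies over all $V$; the reduction to the connected case is true (by additivity of $\sum_{v\in V}$ and of $|\Edges(V,V^c)|$ over the connected pieces, plus the triangle inequality) but must be proved, and for a tree and disconnected $V$ one has $|\Edges(V,V^c)|\geq 2$, so the wall sits at integers, exactly where your $\Phi'$ lives. (ii) Your claim that $\sum_{v\in V}\Phi'_\Gamma(v)$ over a tree is a $\mathbf{Z}$-combination of the $\alpha_j$ and $d/2$ is wrong for separating types $j$ outside the range $1\leq i<\lfloor\tfrac{g-1}{2}\rfloor$, where $\Phi'$ takes the (non-integral) canonical value. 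The irony is that the correct repair makes your step 2 unnecessary: every connected--connected wall functional factors through a two-vertex contraction $G(t,a,b)$; for $t\geq 2$ that graph has no separating edge, so all your correction terms $\psi_{\delta_i}$ vanish there and $\Phi'$ agrees with the nondegenerate $\Phi^d_{\operatorname{can}}$, while for $t=1$ the value is either an integer $\alpha_j$ or a canonical value, both off the wall $\tfrac12+\mathbf{Z}$. Establishing that reduction and the structure of $V_g^d$ it relies on is precisely the content of the results of Kass--Pagani that the paper's own one-line proof cites; your write-up replaces it with an incorrect transversality argument.
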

\begin{proof} The existence of a nondegenerate universal stability assignment $\Phi$ that satisfies the given constraints follows immediately from \cite[Corollary~3.6, and Theorem~2, in the $n=0$ case]{kp3}. By Corollary~\ref{stronglycomp}, the assignment $\stab_{\Phi}$ defines a degree $d$  universal stability assignment.
\end{proof}

We are now in a position to conclude. 

\begin{theorem} \label{OS=fine} If a   degree~$d$  universal  stability assignment of type $(g,0)$ (Definition~\ref{familyfinejacstab} and Remark~\ref{Rem: univ stab}) exists, then $\gcd(d-g+1, 2g-2)=1$. Assuming the latter equality holds, let $\tau$ be one such stability assignment. Then there exists a unique stability assignment of the form $\sigma_{\Phi}$ for  $\Phi \in V_{g}^d$ nondegenerate such that $\tau$ equals $\stab_{\Phi}$ . This stability assignment coincides with the canonical stability assignment $\stab_{\Phi^d_{\operatorname{can}}}$ (see Remark~\ref{Rem: phican}) on all curves $[C] \in \overline{\mathcal{M}}_g$ with no separating nodes.
\end{theorem}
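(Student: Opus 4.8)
The plan is to combine the three preparatory subsections into a single argument. First I would dispose of the divisibility claim: by Lemma~\ref{verysymmetric}, the existence of any degree~$d$ universal stability assignment of type $(g,0)$ forces the existence of an assignment $\stab_{\GSym_g}(\Gamma_g)$ satisfying the two axioms and extending to a universal one, and \emph{loc. cit.} already records that this is possible only when $\gcd(d-g+1,2g-2)=1$. So from now on I would assume this coprimality, which also guarantees (Remark~\ref{Rem: phican}) that $\Phi^d_{\operatorname{can}}$ is nondegenerate, hence that the canonical universal stability assignment $\stab_{\Phi^d_{\operatorname{can}}}$ actually exists.

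Next I would prove the uniqueness-and-representability statement. Let $\tau$ be an arbitrary degree~$d$ universal stability assignment of type $(g,0)$. The key reduction is Corollary~\ref{determinedon2}: a universal stability assignment is determined by its restrictions $\tau_G$ for $G$ ranging over $T'_{g,0}$, i.e.\ over loopless two-vertex graphs whose automorphisms fix the two vertices. Split these into the codimension-one graphs $C_{g,0}$ (the vine curves with a single node, i.e.\ $\Gamma(1,i,g-i)$) and the graphs with $t\geq 2$ edges. For the latter, Corollary~\ref{twovertexgraph} says $\tau_G$ is already forced: there is at most one value extending to a universal stability, so $\tau_G$ and $(\stab_{\Phi^d_{\operatorname{can}}})_G$ must agree on every such $G$. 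For the former, let $\alpha_i\in\Z$ be the integer determined by $\tau_{\Gamma(1,i,g-i)}$ via Example~\ref{vinestab} (using that a stability assignment on a two-vertex graph is uniquely encoded by one integer). By Proposition~\ref{rem: canonical}, there is a nondegenerate $\Phi\in V_g^d$ with $\Phi(\Gamma(1,i,g-i))=(\alpha_i,d-\alpha_i)$ for all $i$, and by Corollary~\ref{stronglycomp} the assignment $\stab_\Phi$ is a universal stability assignment. Then $\stab_\Phi$ and $\tau$ agree on all of $C_{g,0}$ by construction, and on all of $T'_{g,0}\setminus C_{g,0}$ by the previous sentence combined with Corollary~\ref{twovertexgraph} applied to both; hence by Corollary~\ref{determinedon2} we get $\tau=\stab_\Phi$. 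Uniqueness of $\Phi$ up to the equivalence $\stab_\Phi=\stab_{\Phi'}$ is immediate since $\Phi$ is pinned down on $C_{g,0}$ by the $\alpha_i$, and the value of a universal numerical polarization over all of $G_{g,0}$ is determined by its values on the codimension-one boundary strata (this is the content of the classification in \cite{kp3} quoted in Proposition~\ref{rem: canonical}).

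Finally, for the last sentence I would restrict attention to curves with no separating nodes, i.e.\ to graphs in $G_{g,0}^{\operatorname{NS}}$, and to $T'_{g,0}\setminus C_{g,0}$. On every graph $G$ in $T'_{g,0}\setminus C_{g,0}$, Corollary~\ref{twovertexgraph} forces $\stab_\Phi$ and $\stab_{\Phi^d_{\operatorname{can}}}$ to coincide (both extend to universal stabilities, and there is at most one such value). By the second half of Corollary~\ref{determinedon2}, agreement on $T'_{g,0}\setminus C_{g,0}$ propagates to agreement on all curves with no separating nodes. Since $\tau=\stab_\Phi$, this gives $\tau=\stab_{\Phi^d_{\operatorname{can}}}$ on all $[C]\in\overline{\mathcal M}_g$ with no separating nodes, as claimed.

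The main obstacle I anticipate is bookkeeping rather than a deep new idea: one must be careful that the reductions of Corollary~\ref{twovertexgraph} and Proposition~\ref{rem: canonical} indeed cover \emph{all} of $T'_{g,0}$ (every loopless two-vertex stable graph of genus $g$ with the vertices fixed by all automorphisms), and that the parity constraint ``$d+1-t-i+j$ is even'' used inside Corollary~\ref{twovertexgraph} is consistent with the coprimality $\gcd(d-g+1,2g-2)=1$ established first — which it is, since $2g-2$ even and coprime to $d-g+1$ forces $d-g+1$ odd, hence $d+1-t-i+j=d-g+1+2(1-j)-\dots$ has the right parity. Everything else is an assembly of results already proved in the previous two subsections.
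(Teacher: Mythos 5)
Your proposal is correct and follows essentially the same route as the paper's own proof: dispose of the coprimality via Lemma~\ref{verysymmetric}, reduce to two-vertex graphs via Corollary~\ref{determinedon2}, pin down the one-edge vine curves with Proposition~\ref{rem: canonical} and the multi-edge ones with Corollary~\ref{twovertexgraph}, and deduce the final claim from the second half of Corollary~\ref{determinedon2}. You merely spell out some bookkeeping (the split of $T'_{g,0}$ into $C_{g,0}$ and its complement, and the parity check) that the paper leaves implicit.
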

\begin{proof}
By  Lemma~\ref{verysymmetric} if $\gcd(d-g+1, 2g-2) \neq 1$ there exists no universal stability assignment. From now on we will assume $\gcd(d-g+1, 2g-2) = 1$.

We now prove the second statement. By the first part of Corollary~\ref{determinedon2},  proving the equality $\tau = \stab_{\Phi}$ for some nondegenerate $\Phi \in V_g^d$, is equivalent to proving the equality of the restrictions \begin{equation} \label{equality2} \tau_G = \stab_{G,\Phi(G)} \quad \text{for all loopless graphs } G \in {G}_{g,0} \text{ with } 2 \text{ vertices.}\end{equation} 
By Proposition~\ref{rem: canonical} there exists a stability assignment of the form $\stab_{\Phi}$ that coincides with $\tau$ when restricted to all loopless graphs $G$ with $2$ vertices and $1$ edge.  By the uniqueness statement in Corollary~\ref{twovertexgraph} we conclude that Condition~\eqref{equality2} holds, hence that $\tau = \stab_{\Phi}$. 

Our last statement follows from the second part of Corollary~\ref{determinedon2}.

\end{proof}

By combining Theorem~\ref{OS=fine} and Proposition~\ref{compcontract} (with $\mathcal{X}/S$ the universal family over $S=\Mb_g$), we immediately deduce the following classification result. 
\begin{corollary} \label{maincoroll} If $\overline{\mathcal{J}}_g$ is a degree~$d$ fine compactified universal Jacobian for $\Mb_g$, then $d$ satisfies $\gcd(d-g+1, 2g-2)=1$, and there exists $\Phi \in V_{g}^d$ such that $\overline{\mathcal{J}}_g = \Jb_{g} (\Phi)$.
\end{corollary}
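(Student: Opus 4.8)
The plan is to assemble Corollary~\ref{maincoroll} from three results already available: Proposition~\ref{compcontract}, Theorem~\ref{OS=fine}, and the bijection of Corollary~\ref{bijection}. First I would take an arbitrary degree~$d$ fine compactified universal Jacobian $\overline{\mathcal{J}}_g \subseteq \Simp^d(\Cb_g/\Mb_g)$ and note that, since the universal curve over $\Mb_g$ has smooth generic fiber, each fiber $\fcj_s$ over a geometric point $s$ of $\Mb_g$ is a \emph{smoothable} degree~$d$ fine compactified Jacobian of the fiber curve (the remark following Definition~\ref{def:familyfinecompjac}). This is precisely the hypothesis needed to invoke Proposition~\ref{compcontract}, which then tells us that the collection $\tau := \Set{\sigma_{\fcj_s}}_s$ of associated assignments (Definition~\ref{assocassign}) is a family of degree~$d$ stability assignments for $\Cb_g/\Mb_g$, i.e. a degree~$d$ universal stability assignment of type $(g,0)$ in the sense of Definition~\ref{familyfinejacstab} and Remark~\ref{Rem: univ stab}.

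Next I would feed $\tau$ into Theorem~\ref{OS=fine}. Since a degree~$d$ universal stability assignment of type $(g,0)$ now exists, that theorem forces $\gcd(d-g+1,2g-2)=1$ --- which is the first assertion of the corollary --- and, under this numerical condition, produces a nondegenerate $\Phi \in V_g^d$ with $\tau = \sigma_\Phi$. Because $\Phi$ is a strongly compatible nondegenerate universal numerical polarization, Corollary~\ref{stronglycomp} tells us that the Kass--Pagani compactified Jacobian $\Jb_g(\Phi) \subseteq \Simp^d(\Cb_g/\Mb_g)$ is itself a degree~$d$ fine compactified universal Jacobian; by construction (cf. the discussion around Corollary~\ref{cor: familyphi}) it coincides with the moduli space of $\sigma_\Phi$-stable sheaves, so by Corollary~\ref{bijection} its associated assignment is $\sigma_{\Jb_g(\Phi)} = \sigma_\Phi$.

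Finally I would conclude by uniqueness. We now have two degree~$d$ fine compactified universal Jacobians, $\overline{\mathcal{J}}_g$ and $\Jb_g(\Phi)$, with the same associated assignment, namely $\sigma_{\overline{\mathcal{J}}_g} = \tau = \sigma_\Phi = \sigma_{\Jb_g(\Phi)}$. Since Corollary~\ref{bijection} asserts that passing to the associated assignment is a bijection between degree~$d$ fine compactified universal Jacobians and degree~$d$ universal stability assignments --- in particular injective --- it follows that $\overline{\mathcal{J}}_g = \Jb_g(\Phi)$, completing the proof. The argument is essentially bookkeeping once Theorem~\ref{OS=fine} is in place; the only points demanding a little care are verifying that the fibers of $\overline{\mathcal{J}}_g$ are smoothable (so that Proposition~\ref{compcontract} applies) and that $\Jb_g(\Phi)$ is identified with the moduli space of $\sigma_\Phi$-stable sheaves (so that Corollary~\ref{bijection} can be applied to it as well). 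I do not expect any substantive obstacle beyond the results already proven.
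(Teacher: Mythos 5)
Your proposal is correct and follows the paper's own route exactly: the paper deduces Corollary~\ref{maincoroll} by combining Theorem~\ref{OS=fine} with Proposition~\ref{compcontract} applied to the universal family over $\Mb_g$, using Corollary~\ref{bijection} to pass from equality of associated assignments back to equality of the compactified Jacobians. You have merely spelled out the bookkeeping (smoothability of the fibers, the identification of $\Jb_g(\Phi)$ with the moduli space of $\sigma_\Phi$-stable sheaves) that the paper leaves implicit in its ``we immediately deduce''.
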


We conclude by relating this result to the compactified universal Jacobians constructed in the nineties by Caporaso \cite{caporaso}, Pandharipande \cite{panda} and Simpson \cite{simpson}. By earlier work, this amounts to relating $\Jb(\Phi)$, for any $\Phi \in V_g^d$, to the canonical fine compactified universal Jacobian $\Jb(\Phi^d_{\text{can}})$.

\begin{remark}
    \label{maincoroll2} Let $\Phi \in V_g^d$ and assume $\gcd(d-g+1,2g-2)=1$. Then we claim that there exists a line bundle $M \in \Pic^0(\Cb_g)$ such that $\Phi=\Phi^d_{\operatorname{can}} + \underline{\deg}(M)$. In particular, translation by  $M$ induces an isomorphism $  \Jb_g(\Phi^d_{\operatorname{can}}) \to \Jb_g(\Phi)$ that commutes with the forgetful maps to $\Mb_g$.

    The claim follows from \cite[Section 6]{kp3}. In \emph{loc. cit.} the authors study the natural translation action of $\Pic^0(\Cb_{g,n})$ on $V^d_{g,n}$. This action induces an action on the collection of connected components of the nondegenerate locus, and by \cite[Lemma~6.15~(2)]{kp3}  this action is transitive when $n$ equals zero.
    \end{remark}

The relation between $\Jb_g(\Phi^d_{\operatorname{can}})$ and the compactified Jacobians of Caporaso \cite{caporaso}, Pandharipande \cite{panda} is discussed in \cite[Remark~5.14]{kp3}. The relation with Simpson's stability with respect to an ample line bundle is established in \cite[Corollary~4.3]{kp3} (see also \cite[Section~3]{kp2}).

\section{Final remarks and open questions}
\label{sec: final}

In Definitions~\ref{finejacstab} and \ref{familyfinejacstab} we  introduced the notion of families of degree~$d$ stability assignments for a single curve $X$ and for the universal family $\overline{\mathcal{C}}_{g,n}\to \Mb_{g,n}$.

For $X$ a nodal curve with dual graph $\Gamma$, let $\Sigma^d(\Gamma)$ be the set of all degree~$d$ stability assignments.  In Section~\ref{Sec: OS} we defined, following Oda--Seshadri \cite{oda79}, a stability space $V^d(\Gamma)$ with a subspace of degenerate elements (a union of hyperplanes). We define $\mathcal{P}^d(\Gamma)$ to be the set whose elements are the connected components (maximal dimensional polytopes) of the nondegenerate locus in $V^d(\Gamma)$. By Corollary~\ref{bijection} and Proposition~\ref{OSisfine}, there is an injection $\mathcal{P}^d(\Gamma) \to \Sigma^d(\Gamma)$, and it is natural to ask when this map is surjective. This is the same as Question~\ref{question}. the case where $g(X)=1$ (or even $b_1(\Gamma(X))=1$) was settled in the affirmative in \cite{paganitommasi}, see Example~\ref{genus1}.

Then let $\Sigma^d_{g,n}$ be the set of all degree~$d$  stability assignments of type $(g,n)$. The latter can be described as the inverse limit
\[\Sigma^d_{g,n}= \varprojlim_{G\in G_{g,n}} \Sigma^d(G)\]
where $G_{g,n}$ is the category of stable $n$-pointed dual graphs of genus $g$.

Similarly, for universal (and strongly compatible) numerical polarizations (see Section~\ref{Sec: OS}), in \cite{kp3} the authors  defined the affine space
\[
V^d_{g,n}:=\varprojlim_{G\in G_{g,n}}  V^d(G)
\]
(where morphisms are as in Definition~\ref{def:morph}). The latter is always nonempty, as it contains the canonical stability assignment. 

Let $\mathcal{P}^d_{g,n}$ be the set whose elements are the connected components of the complement of the degenerate stability assignments in $V_{g,n}^d$ (see Definition~\ref{familyphistab}). By Corollary~\ref{bijection} and Corollary~\ref{OSisfine}, we have a natural injection $\mathcal{P}^d_{g,n} \to \Sigma^d_{g,n}$, and one can ask under which assumptions the latter is surjective\footnote{While this paper was under review, this question has been answered completely in Fava's preprint \cite{fava}. In loc.cit., and building upon Viviani's results \cite{viviani}, the author also settles the question of what are the pairs $(g,n)$ such that  there exist fine compactified universal Jacobians of type $(g,n)$ whose fibres over $\overline{\mathcal{M}}_{g,n}$ are not all given by some numerical polarization.}. The genus $1$ case was settled in \cite{paganitommasi}: the map $\mathcal{P}^d_{1,n} \to \Sigma^d_{1,n}$ is a bijection if and only if $n \leq 5$ (see Remark~\ref{n>0}). The case without marked points is solved by Corollary~\ref{maincoroll}: the map $\mathcal{P}^d_g \to \Sigma^d_g$ is a bijection for all $g$.

Here are some natural future problems to address.
\begin{enumerate}
\item What is the analogue of Theorem~\ref{mainthm} for the case of  smoothable compactified Jacobians (not necessarily fine)? By this we mean a smoothable open substack of the moduli space of rank~$1$ torsion-free sheaves (not just the simple ones) that have a proper good moduli space.

\item Is there a natural stability space with walls, containing $V^d_{g,n}$, and a natural bijection from the set of its maximal-dimensional chambers to $\Sigma^d_{g,n}$?
\end{enumerate}

\bibliographystyle{alpha}
\bibliography{biblio-curves}

\newcommand{\etalchar}[1]{$^{#1}$}
\begin{thebibliography}{MMUV22}

\bibitem[ABKS14]{abkm}
Yang An, Matthew Baker, Greg Kuperberg, and Farbod Shokrieh.
\newblock Canonical representatives for divisor classes on tropical curves and
  the matrix-tree theorem.
\newblock {\em Forum Math. Sigma}, 2:25, 2014.
\newblock Id/No e24.

\bibitem[ACG11]{acg2}
Enrico Arbarello, Maurizio Cornalba, and Phillip~A. Griffiths.
\newblock {\em Geometry of algebraic curves. {V}olume {II}}, volume 268 of {\em
  Grundlehren der Mathematischen Wissenschaften [Fundamental Principles of
  Mathematical Sciences]}.
\newblock Springer, Heidelberg, 2011.
\newblock With a contribution by Joseph Daniel Harris.

\bibitem[AE12]{aminiesteves}
Omid Amini and Eduardo Esteves.
\newblock Voronoi tilings, toric arrangements and degenerations of line bundles
  i, 2012.
\newblock \href{https://arxiv.org/abs/2012.15620}{arXiv:2012.15620}.

\bibitem[AK80]{AK}
Allen~B. Altman and Steven~L. Kleiman.
\newblock Compactifying the {P}icard scheme.
\newblock {\em Adv. in Math.}, 35(1):50--112, 1980.

\bibitem[Ale04]{alexeev-cjtm}
Valery Alexeev.
\newblock Compactified {J}acobians and {T}orelli map.
\newblock {\em Publ. Res. Inst. Math. Sci.}, 40(4):1241--1265, 2004.

\bibitem[Bar17]{barmak}
Jonathan Barmak.
\newblock Private communication, 2017.

\bibitem[BHP{\etalchar{+}}23]{bhpss}
Younghan Bae, David Holmes, Rahul Pandharipande, Johannes Schmitt, and Rosa
  Schwarz.
\newblock Pixton's formula and {A}bel-{J}acobi theory on the {P}icard stack.
\newblock {\em Acta Math.}, 230(2):205--319, 2023.

\bibitem[BLM{\etalchar{+}}21]{bayer}
Arend Bayer, Mart\'{\i} Lahoz, Emanuele Macr\`{i}, Howard Nuer, Alexander
  Perry, and Paolo Stellari.
\newblock Stability conditions in families.
\newblock {\em Publ. Math. Inst. Hautes \'{E}tudes Sci.}, 133:157--325, 2021.

\bibitem[BLR90]{blr}
Siegfried Bosch, Werner L\"{u}tkebohmert, and Michel Raynaud.
\newblock {\em N\'{e}ron models}, volume~21 of {\em Ergebnisse der Mathematik
  und ihrer Grenzgebiete (3) [Results in Mathematics and Related Areas (3)]}.
\newblock Springer-Verlag, Berlin, 1990.

\bibitem[BMS06]{bms}
Simone Busonero, Margarida Melo, and Lidia Stoppino.
\newblock Combinatorial aspects of nodal curves.
\newblock {\em Matematiche}, 61(1):109--141, 2006.

\bibitem[Bri07]{bridgeland}
Tom Bridgeland.
\newblock Stability conditions on triangulated categories.
\newblock {\em Ann. of Math. (2)}, 166(2):317--345, 2007.

\bibitem[Cap94]{caporaso}
Lucia Caporaso.
\newblock A compactification of the universal {P}icard variety over the moduli
  space of stable curves.
\newblock {\em J. Amer. Math. Soc.}, 7(3):589--660, 1994.

\bibitem[Cap12]{caporasoneron}
Lucia Caporaso.
\newblock Compactified {J}acobians of {N}\'{e}ron type.
\newblock {\em Atti Accad. Naz. Lincei Rend. Lincei Mat. Appl.},
  23(2):213--227, 2012.

\bibitem[CCUW20]{CCUW}
Renzo Cavalieri, Melody Chan, Martin Ulirsch, and Jonathan Wise.
\newblock A moduli stack of tropical curves.
\newblock {\em Forum Math. Sigma}, 8:93, 2020.
\newblock Id/No e23.

\bibitem[CPS23]{cps}
Karl Christ, Sam Payne, and Tif Shen.
\newblock Compactified {J}acobians as {M}umford models.
\newblock {\em Trans. Amer. Math. Soc.}, 376(7):4605--4630, 2023.

\bibitem[EP16]{esteves-pacini}
Eduardo Esteves and Marco Pacini.
\newblock Semistable modifications of families of curves and compactified
  {J}acobians.
\newblock {\em Ark. Mat.}, 54(1):55--83, 2016.

\bibitem[Est01]{esteves}
Eduardo Esteves.
\newblock Compactifying the relative {J}acobian over families of reduced
  curves.
\newblock {\em Trans. Amer. Math. Soc.}, 353(8):3045--3095, 2001.

\bibitem[Fav24]{fava}
Marco Fava.
\newblock On a combinatorial classification of fine compactified universal
  {J}acobians, 2024.
\newblock \href{https://arxiv.org/abs/2403.17871}{arXiv:2403.17871}.

\bibitem[GST22]{gst}
Andreas Gross, Farbod Shokrieh, and Lilla T\'othm\'er\'esz.
\newblock Effective divisor classes on metric graphs.
\newblock {\em Math. Z.}, 302(2):663--685, 2022.

\bibitem[HKP18]{HKP}
David Holmes, Jesse~Leo Kass, and Nicola Pagani.
\newblock Extending the double ramification cycle using {J}acobians.
\newblock {\em Eur. J. Math.}, 4(3):1087--1099, 2018.

\bibitem[HMP{\etalchar{+}}22]{hmpps}
David Holmes, Samouil Molcho, Rahul Pandharipande, Aaron Pixton, and Johannes
  Schmitt.
\newblock Logarithmic double ramification cycles, 2022.
\newblock \href{https://arxiv.org/abs/2207.06778}{arXiv:2207.06778}.

\bibitem[Igu56]{igusa}
Jun-ichi Igusa.
\newblock Fibre systems of {J}acobian varieties.
\newblock {\em Amer. J. Math.}, 78:171--199, 1956.

\bibitem[Kas13]{kass}
Jesse~Leo Kass.
\newblock Two ways to degenerate the {J}acobian are the same.
\newblock {\em Algebra Number Theory}, 7(2):379--404, 2013.

\bibitem[KP17]{kp2}
Jesse~Leo Kass and Nicola Pagani.
\newblock Extensions of the universal theta divisor.
\newblock {\em Adv. Math.}, 321:221--268, 2017.

\bibitem[KP19]{kp3}
Jesse~Leo Kass and Nicola Pagani.
\newblock The stability space of compactified universal {J}acobians.
\newblock {\em Trans. Amer. Math. Soc.}, 372(7):4851--4887, 2019.

\bibitem[Mel19]{melouniversal}
Margarida Melo.
\newblock Universal compactified {J}acobians.
\newblock {\em Port. Math.}, 76(2):101--122, 2019.

\bibitem[MMUV22]{memoulvi}
Margarida Melo, Samouil Molcho, Martin Ulirsch, and Filippo Viviani.
\newblock {Tropicalization of the universal Jacobian}.
\newblock {\em {Épijournal de Géométrie Algébrique}}, {Volume 6}, August
  2022.

\bibitem[MRV17]{meravi}
Margarida Melo, Antonio Rapagnetta, and Filippo Viviani.
\newblock Fine compactified {J}acobians of reduced curves.
\newblock {\em Trans. Amer. Math. Soc.}, 369(8):5341--5402, 2017.

\bibitem[MV12a]{mv}
Margarida Melo and Filippo Viviani.
\newblock Fine compactified {J}acobians.
\newblock {\em Math. Nachr.}, 285(8-9):997--1031, 2012.

\bibitem[MV12b]{meloviv}
Margarida Melo and Filippo Viviani.
\newblock Fine compactified {J}acobians.
\newblock {\em Math. Nachr.}, 285(8-9):997--1031, 2012.

\bibitem[MZ08]{mz}
Grigory Mikhalkin and Ilia Zharkov.
\newblock Tropical curves, their {J}acobians and theta functions.
\newblock In {\em Curves and abelian varieties}, volume 465 of {\em Contemp.
  Math.}, pages 203--230. Amer. Math. Soc., Providence, RI, 2008.

\bibitem[OS79]{oda79}
Tadao Oda and C.~S. Seshadri.
\newblock Compactifications of the generalized {J}acobian variety.
\newblock {\em Trans. Amer. Math. Soc.}, 253:1--90, 1979.

\bibitem[Pan96]{panda}
Rahul Pandharipande.
\newblock A compactification over {$\overline {M}_g$} of the universal moduli
  space of slope-semistable vector bundles.
\newblock {\em J. Amer. Math. Soc.}, 9(2):425--471, 1996.

\bibitem[PT22]{paganitommasi}
Nicola Pagani and Orsola Tommasi.
\newblock Geometry of genus one fine compactified universal {J}acobians.
\newblock {\em Int. Math. Res. Not.}, 04 2022.
\newblock rnac094.

\bibitem[Ray70]{raynaud70}
M.~Raynaud.
\newblock Sp\'{e}cialisation du foncteur de {P}icard.
\newblock {\em Inst. Hautes \'{E}tudes Sci. Publ. Math.}, (38):27--76, 1970.

\bibitem[Sim94]{simpson}
C.~T. Simpson.
\newblock Moduli of representations of the fundamental group of a smooth
  projective variety. {I}.
\newblock {\em Inst. Hautes \'Etudes Sci. Publ. Math.}, 79:47--129, 1994.

\bibitem[{Sta}23]{stacks}
The {Stacks Project Authors}.
\newblock \textit{Stacks Project}.
\newblock \url{https://stacks.math.columbia.edu}, 2023.

\bibitem[Viv23]{viviani}
Filippo Viviani.
\newblock On the classification of fine compactified jacobians of nodal curves,
  2023.
\newblock \href{https://arxiv.org/abs/2310.20317}{arXiv:2310.20317}.

\bibitem[Yue18]{chiho}
Chi~Ho Yuen.
\newblock Geometric bijections of graphs and regular matroids, 2018.
\newblock PhD thesis, Georgia Institute of Technology, 2018.

\end{thebibliography}

\end{document}